\documentclass[11pt]{amsart}

\usepackage{setspace}

\usepackage{amsmath,amssymb,amsthm,amscd,yhmath,mathtools,comment,mathrsfs,bbm,hyperref,thmtools}
\usepackage{fancyhdr}
\usepackage{cancel}
\usepackage[pdftex]{graphicx}
\graphicspath{ {./images/} }
\usepackage{multicol}
\usepackage{textpos}
\usepackage{geometry}
\usepackage{stmaryrd}

\usepackage{enumitem}

\usepackage{array}
\usepackage{footnote}
\makesavenoteenv{tabular}

\newcolumntype{o}{@{}>{{}}c<{{}}@{}}

\newenvironment{MyLinSys*}[1]
{\begin{array}{*{#1}{ro}r}}{\end{array}}

\title{Codimension of Brill-Noether Loci on Abelian Surfaces and the First Segre Invariant}
\author{Jesse Chien}
\date{}

\usepackage{tikz,pgfplots}
\usepackage{tikz-cd}
\usetikzlibrary{decorations.markings}
\usetikzlibrary{arrows}
\usetikzlibrary{positioning}
\usetikzlibrary{calc}

\theoremstyle{plain}
\newtheorem{theorem}{Theorem}[section]

\newtheorem{proposition}[theorem]{Proposition}
\newtheorem{lemma}[theorem]{Lemma}

\theoremstyle{definition}
\newtheorem{defn}[theorem]{Definition}

\newtheorem{remark}[theorem]{Remark}
\newtheorem{example}[theorem]{Example}
\newtheorem{notation}[theorem]{Notation}
\newtheorem{assumptions}[theorem]{Assumptions}

\newtheorem*{defn*}{Definition}
\newtheorem*{claim*}{Claim}
\newtheorem*{strategy*}{Strategy}
\newtheorem*{notation*}{Notation}
\newtheorem*{setup*}{Setup}
\newtheorem*{theorem*}{Theorem}
\newtheorem*{initial reduction*}{Initial Reduction}
\newtheorem*{organization*}{Organization}

\newcommand{\CC}{\mathbb{C}}

\newcommand{\RR}{\mathbb{R}}
\newcommand{\ZZ}{\mathbb{Z}}
\newcommand{\PP}{\mathbb{P}}

\newcommand{\Oc}{\mathcal{O}}

\newcommand{\imply}{\Rightarrow}
\newcommand{\im}{\operatorname{im}}

\newcommand{\Hom}{\operatorname{Hom}}
\newcommand{\Ext}{\operatorname{Ext}}
\newcommand{\ext}{\operatorname{ext}}
\newcommand{\rank}{\text{rank}}

\begin{document}
\maketitle
\begin{abstract}
Let $X$ be an abelian surface of Picard rank 1 with ample generator $H$. We show that, for a Mukai vector $\mathbf{v}=(r,mH,\chi)$ with $r,m\geq 1$ and $\chi\leq 0$, the first Brill-Noether loci $\{E \;|\; H^0(E)\neq 0\}$ in $M_X(\mathbf{v})$ is nonempty of the expected codimension $1-\chi$. As an application, we study the rank 1 Segre invariant on these abelian surfaces. Specifically, for $\mathbf{v}$ a Mukai vector of a $\mu$-stable sheaf of $\mathrm{rank}\geq 2$, we compute the Segre invariant of the general sheaf in $M_X(\mathbf{v})$, and we determine the numbers which arise as the Segre invariant of a sheaf in $M_X(\mathbf{v})$.
\end{abstract}

\section{Introduction}
For a smooth projective surface $X$ and a fixed Mukai vector $\mathbf{v}$, there are moduli spaces $M_X^s(\mathbf{v})$ of Gieseker stable sheaves on $X$ with Mukai vector $\mathbf{v}$. In these spaces, there are special subschemes 
\[
B_X^k = \{E\;|\; h^0(E)\geq k\}\subset M_X^s(\mathbf{v})
\]
known as Brill-Noether loci. The geometry of these subschemes, generalizing Brill-Noether theory on curves, has been extensively studied on many classes of surfaces (for rational surfaces, see \cite{ch18}, \cite{ch20}, \cite{gh94}, \cite{gll22}, \cite{gl26}; for abelian surface, see \cite{coskunnueryoshioka2}, \cite{bayerli}; for K3 surfaces, see \cite{cny23}, \cite{l06}, \cite{l12}). For a survey of the field, we refer the reader to \cite{coskunnueryoshioka}\par

In this paper, we will study the first Brill-Noether loci of moduli spaces of sheaves on abelian surfaces. The geometry of these spaces in general have also been widely studied, for example in the seminal work of Mukai and Yoshioka (see, for example, \cite{mukai81}, \cite{mukai87}, \cite{myy1}, \cite{myy2}, \cite{yoshioka}, \cite{yoshioka03}, \cite{yoshioka09b}). 

 \begin{defn}
 Let $(X,H)$ be a polarized abelian surface of Picard rank 1, with $H$ the ample generator, and let $M_X^s(\mathbf{v})$ denote the moduli space of stable sheaves of Mukai vector $\mathbf{v}$ on $X$ (with respect to $H$). For this paper, the \emph{Brill-Noether locus} of $M_X^s(\mathbf{v})$ will denote the closed subscheme
 \[
 B^1_X(\mathbf{v}) = \{E\;|\; H^0(E)\neq 0\}\subset M_X^s(\mathbf{v}).
 \]
  The analogous definition for the Brill-Noether loci $\mathcal{B}^1_X(\mathbf{v})$ can be made for the stack $\mathcal{M}_X(\mathbf{v})$ of Gieseker semistable sheaves of Mukai vector $\mathbf{v}$.
 \end{defn}

 In this paper, we prove the following theorem.

\begin{theorem}[Theorem \ref{nonemptiness} and Theorem \ref{BNmain}]\label{BNintro}
Let $(X,H)$ be a polarized abelian surface of Picard rank 1, with $H$ the ample generator. Let $\mathbf{v}=(r,mH,\chi)$ be the Mukai vector of a stable sheaf with $r,m\geq 1$ and $\chi\leq 0$. Then the Brill-Noether locus $\mathcal{B}^1_X(\mathbf{v})$ in the stack $\mathcal{M}_X(\mathbf{v})$ is nonempty and has the expected codimension of $1-\chi$. In particular, the Brill-Noether locus $B^1_X(\mathbf{v})$ in the coarse space $M_X^s(\mathbf{v})$ is nonempty and has the expected codimension $1-\chi$. 
\end{theorem}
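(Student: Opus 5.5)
We prove nonemptiness and the codimension bound separately, working throughout on the stack $\mathcal{M}_X(\mathbf{v})$. On an abelian surface, $\operatorname{ext}^2(E,E)=\hom(E,E)$ for stable $E$, so $\mathcal{M}_X(\mathbf{v})$ is smooth of dimension $\mathbf{v}^2$ (that is, $\mathbf{v}^2+1$ for the coarse space, up to the usual $\mathbb{G}_m$ correction). The expected codimension comes from $\chi(E)=\chi\le 0$: if $h^0(E)=k$ then $h^1(E)\ge k-\chi$. The standard Brill-Noether count then predicts that the locus $h^0\ge 1$ has codimension $1\cdot(1-\chi)$.

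\textbf{Nonemptiness.} We construct $E$ as a nontrivial extension
\[
0\to \mathcal{O}_X \to E \to F\to 0,
\]
where $F$ is a stable (or $\mu$-stable) sheaf with Mukai vector $\mathbf{v}-(1,0,0)=(r-1,mH,\chi)$. When $r=1$, $F$ is a twisted ideal sheaf $I_Z(mH)$ or a torsion sheaf. Such extensions exist because $\operatorname{ext}^1(F,\mathcal{O}_X)=h^1(F)^\vee$ and $\chi(F)=\chi\le 0$, which forces $h^1(F)>0$ once $h^0(F)=0$. Since $\Hom(\mathcal{O}_X,E)\ne 0$, such an $E$ lies in the Brill-Noether locus. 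It remains to show that a general such $E$ is Gieseker semistable. The destabilizing subsheaves would have slope $\ge m/r$, and we rule them out using Picard rank one: $c_1$ is a multiple of $H$, and $\mathcal{O}_X$ has slope $0<m/r$. We argue as in Bridgeland or Yoshioka style constructions, choosing the extension general. If semistability fails in small cases, we instead use a Fourier--Mukai transform to reduce to the case where $\mathbf{v}$ has rank $\le 1$.

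\textbf{Codimension.} We stratify $\mathcal{B}^1$ by $k=h^0(E)$. Consider the stack $\mathcal{Q}_k$ of pairs $(E,V)$ with $V\subset H^0(E)$ of dimension $k$, or more cleanly the stack of extensions
\[
0\to \mathcal{O}_X^{\oplus k}\to E\to F\to 0
\]
induced by the evaluation map. Any nonzero section $\mathcal{O}_X\to E$ of a semistable sheaf of positive slope is injective. The key estimate is
\[
\dim \mathcal{B}^1_k \le \dim\{F\} + \dim \Ext^1(F,\mathcal{O}^k) - \dim \operatorname{Aut}\text{-terms} - \dim \operatorname{Gr}(k,\cdot),
\]
and we compare the result with $\mathbf{v}^2-(1-\chi)$. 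The Mukai pairing gives $\mathbf{v}^2=(\mathbf{v}-k\mathbf{o})^2-2k\langle \mathbf{v},\mathbf{o}\rangle$ up to sign, where $\mathbf{o}=(1,0,0)$ and $\langle\mathbf{v},\mathbf{o}\rangle=-\chi$. This makes the count close: the correction is $k(k-\chi)$, which is at least $1-\chi$.

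\textbf{Main obstacle.} The hardest step is that the quotients $F$ need not be semistable, so $\dim\{F\}$ has to be bounded for possibly unstable sheaves of Mukai vector $\mathbf{v}-k\mathbf{o}$. We would handle this by Harder--Narasimhan filtration: for each HN type we bound the stack dimension by $\sum \mathbf{w}_i^2+\sum_{i<j}\langle\mathbf{w}_i,\mathbf{w}_j\rangle$-type expressions, using that semistable stacks on abelian surfaces have dimension $\mathbf{w}^2$ and that $\hom(F_j,F_i)=0$ for $i<j$. We then show that each stratum has strictly larger codimension, using Bogomolov and the positivity $\mathbf{w}_i^2\ge 0$ for semistable factors. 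An alternative is induction on $r$, with torsion-free quotients handled via Fourier--Mukai, and rank-zero quotients handled directly.
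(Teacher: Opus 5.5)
There are genuine gaps in both halves of your proposal.

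\textbf{Nonemptiness.} Your claim that $\chi(F)\le 0$ and $h^0(F)=0$ force $h^1(F)>0$ is false at $\chi=0$. For a $\mu$-stable $F$ of vector $(r-1,mH,0)$ you have $h^2(F)=\hom(F,\Oc_X)=0$, since $K_X$ is trivial and $F$ has positive slope. So $h^1(F)=h^0(F)-\chi=0$ and $\Ext^1(F,\Oc_X)=0$. The only extension is then $\Oc_X\oplus F$, which $F$ destabilizes. To get a nonsplit extension you would already need $F$ to lie in a Brill--Noether locus.

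More seriously, for every $\chi$ the real content of your construction is that a general extension is Gieseker semistable. That means showing no subsheaf $G\subset F$ with $m/r\le\mu(G)<m/(r-1)$ lifts to $E$ for a general class, and you give no argument for it. Your fallback also fails: $\Phi[1]$ sends $(r,mH,\chi)$ to $(-\chi,m\hat H,-r)$, which does not have rank $\le 1$ in general.

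The paper needs no genericity argument here:
\begin{itemize}
\item Take a $\mu$-stable $E$ with $\chi=0$. Since $\Phi(E)\neq 0$, some $H^i(E\otimes P)\neq0$ for some $P\in\operatorname{Pic}^0(X)$.
\item Because $h^2=0$ and $\chi=0$, this forces $H^0(E\otimes P)\neq 0$.
\item Then $-\chi$ elementary modifications of $0\to\Oc_X\to E\otimes P\to F\to0$ on the quotient side lower $\chi$ while keeping both $\mu$-stability and the section.
\end{itemize}

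\textbf{Codimension.} Your count sends a pair $(E,s)$ to the quotient $F=E/s(\Oc_X)$. The fibre has dimension $\ext^1(F,\Oc_X)-\hom(F,\Oc_X)=-\chi+h^0(F)$. Put $\mathbf{w}=\mathbf{v}-(1,0,0)$, so $\mathbf{w}^2=\mathbf{v}^2+2\chi$. To reach codimension $1-\chi$ you then need, for every $j$, that the locus of occurring quotients with $h^0(F)\ge j$ has dimension $\le \mathbf{w}^2+1-j$.

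Here $F$ may be $\mu$-unstable, and it has torsion whenever $s$ vanishes along a curve. So what you need is a higher Brill--Noether bound for possibly unstable sheaves, which is more than an induction on the present statement provides. ``Bogomolov plus $\mathbf{w}_i^2\ge0$'' does not produce it. An HN stratum of quotients only has codimension of size roughly $\sum_{i<j}\langle\mathbf{w}_i,\mathbf{w}_j\rangle-\sum_i\delta_i+1$, as in Proposition \ref{wallcrossing}, and on top of that you must absorb the extra $h^0$ in the fibre. This is exactly the step you label the main obstacle, and it remains unresolved.

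The paper shows that even in a more constrained setting the crude count fails (Example \ref{BNexample}). It fixes this by pushing the section into an HN factor (Lemma \ref{homF}, Lemma \ref{wallcrossing2}) and inducting on $m$. Its overall route is:
\begin{itemize}
\item Wall-cross along $\sigma_{(0,t)}$, where Lemma \ref{phaseequation} gives strong numerical control on HN factors.
\item At small $t$, apply $\Phi[1]$, which identifies $\Hom(\Oc_X,E)$ with $\Ext^1(\Oc_0,F)$ for a sheaf $F$ of vector $(-\chi,m\hat H,-r)$ on $\hat X$.
\item The BN locus thus becomes the locus of sheaves singular at a fixed point. Its codimension $1-\chi$ is computed directly in Theorem \ref{singular}, using $F\subset F^{**}$ and Quot-scheme dimensions.
\end{itemize}

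Two smaller points. First, the evaluation map $\Oc_X^{\oplus k}\to E$ need not be injective, and cannot be if $k>r$; work with pairs carrying a single section instead. Second, $\dim\mathcal{M}_X(\mathbf{v})=\mathbf{v}^2+1$ and $\dim M_X(\mathbf{v})=\mathbf{v}^2+2$, not $\mathbf{v}^2$ and $\mathbf{v}^2+1$. The per-factor terms $\delta_i$ matter in any HN count, so this offset is not harmless.
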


\begin{remark}
The case $m=1$ is contained in Theorem 1.2 of \cite{bayerli}. The analogous question for K3 surfaces is answered in \cite{l12}.
\end{remark}

\begin{remark}
Recall that the Brill-Noether locus has a determinental structure, so it's codimension is $\leq 1-\chi$ (see \cite{roa}). Thus, it suffices to prove that the codimension is $\geq 1-\chi$. 
\end{remark}

As an application, we will study the rank 1 Segre invariant for stable sheaves on abelian surfaces. We first recall the definition.

\begin{defn}
Let $(X,H)$ be a polarized smooth projective surface. Let $E$ be a coherent sheaf of rank $r$, and let $1\leq k\leq r-1$. The \emph{$k$th Segre invariant} of $E$ is defined to be
\[
s_k(E)=\min_{F\subset E}\big\{ (kc_1(E)-rc_1(F)).H\},
\]
where the minimum is taken over all subsheaves $F\subset E$ of rank $k$. 
\end{defn}
\begin{remark}
One should think of $s_k(E)$ as being the maximum degree of a rank $k$ subsheaf of $E$. The definition is set up so that $E$ is $\mu$-semistable (resp. stable) if and only if $s_k(E)\geq 0$ (resp. $>0$) for all $1\leq k\leq r-1$. \par
Note also that $s_k$ is a lower semicontinuous function and thus induces a stratification on $M_X(\mathbf{v})$.
\end{remark}

The Segre invariant on curves is well understood (see, for example, \cite{brambilapazlange}, \cite{costamiroroig}, \cite{langenarasimhan}, and \cite{russoteixidor}). On surfaces, much less is known, though there has been some very recent progress studying $s_1$ on rational surfaces (see \cite{ri26}, \cite{ro26}, \cite{roa}, and \cite{gl26}).\par

Here, we will study the first Segre invariant $s_1$ on an abelian surface $X$ of Picard rank 1.  Our main result is the following.

\begin{theorem}[Theorem \ref{genericsegre} and Theorem \ref{segrenonemptiness}]\label{segreintro}
Let $(X,H)$ be an abelian surface of Picard rank 1, with $H$ the ample generator, and let $\mathbf{v}=(r,mH,\chi)$ be a Mukai vector with $r\geq 2$ and $\mathbf{v}^2>0$, and $\mathbf{v}\neq (r,0,-1)e^{tH}$. 
\begin{enumerate}
\item Let $k_{\mathbf{v}}$ denote the largest integer $<\mu(\mathbf{v})$ for which $\chi(L,E)\geq -1$, where $\mathbf{v}(E)=\mathbf{v}$ and $L$ is a degree $k_{\mathbf{v}}$ line bundle. Then the first Segre invariant for the general $E\in M_X(\mathbf{v})$ is given by 
\[
(m-rk_{\mathbf{v}})H^2.
\]
 Equivalently, for the general $E$, the maximal degree of a rank 1 subsheaf is $k_{\mathbf{v}}$.
\item For every $k$ for which $k_{\mathbf{v}}\leq k\leq \lceil \mu(\mathbf{v})\rceil-1$, $(m-rk)H^2$ is the Segre invariant of a $\mu$-stable sheaf in $M_X(\mathbf{v})$. 
\end{enumerate}
\end{theorem}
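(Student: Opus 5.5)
Both parts reduce to Theorem \ref{BNintro} applied to the twisted Mukai vector $\mathbf{v}\cdot e^{-kH}$. A rank 1 subsheaf of $E$ of degree $k$ (meaning $c_1=kH$) saturates to $L\otimes I_Z\subset E$ with $L$ a line bundle, and it is enough to consider $L$ itself since $L\otimes I_Z\subset L$. Existence of such a subsheaf is therefore equivalent to $\Hom(L,E)=H^0(E\otimes L^{-1})\neq 0$ for some $L$ with $c_1(L)=kH$. Note $\chi(L,E)=\chi(E\otimes L^{-1})$, which is the Euler characteristic entry of $\mathbf{v}_k:=\mathbf{v}\cdot e^{-kH}=(r,(m-rk)H,\chi_k)$. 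As $k$ ranges over integers $<\mu(\mathbf{v})$, the middle term $m-rk$ stays $\geq 1$, so $\mathbf{v}_k$ satisfies the hypotheses of Theorem \ref{BNintro} whenever $\chi_k\leq 0$.

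\textbf{Part (1).} Fix $k<\mu$. Line bundles of degree $k$ form a $\widehat X$-torsor, which is 2-dimensional. Consider the incidence
\[
I_k=\{(E,L)\;:\;\Hom(L,E)\neq 0\}\subset M_X(\mathbf{v})\times \mathrm{Pic}^{kH}(X).
\]
For fixed $L$, the fibre is the pullback of $B^1_X(\mathbf{v}_k)$ under $E\mapsto E\otimes L^{-1}$. By Theorem \ref{BNintro} it has codimension $1-\chi_k$ when $\chi_k\leq 0$. Hence $\dim I_k=\dim M+2-(1-\chi_k)$, and the image in $M$ has codimension at least $-1-\chi_k$.
\begin{itemize}
\item If $\chi_k\leq -2$, this codimension is positive, so the general $E$ has no degree $k$ subsheaf.
\item If $\chi_k\geq 1$, then $\chi(L,E)>0$ and $\mathrm{ext}^2(L,E)=\hom(E,L)=0$ by stability (since $\mu(L)<\mu(E)$), so $\hom(L,E)>0$ for every $E$.
\item If $\chi_k\in\{-1,0\}$, I need to show the projection $I_k\to M$ is dominant. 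For $\chi_k=-1$ the dimensions match, and for $\chi_k=0$ the image is expected to be all of $M$. Dominance follows if the generic fibre of $I_k\to M$ is finite, which I would get by a tangent-space computation at a general point of $I_k$. Alternatively, for $\chi_k=0$, note that $B^1(\mathbf{v}_k)$ is a divisor, so for each $E$ the locus of $L$ with $\Hom(L,E)\neq 0$ is a theta-type divisor in $\mathrm{Pic}$, which is nonempty.
\end{itemize}
Since $\chi_k$ is increasing in $k$ as $k$ decreases below $\mu$ (it is a quadratic in $k$ that grows on that side), the maximal degree of a rank 1 subsheaf of the general $E$ is exactly $k_{\mathbf{v}}$. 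This gives $s_1=(m-rk_{\mathbf{v}})H^2$. The exclusion of $\mathbf{v}=(r,0,-1)e^{tH}$ and the condition $\mathbf{v}^2>0$ should be exactly what guarantees that the relevant moduli spaces are nonempty of the expected dimension.

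\textbf{Part (2).} For $k_{\mathbf{v}}<k\leq\lceil\mu\rceil-1$, we have $\chi_k\leq -2$. Theorem \ref{BNintro} gives nonemptiness of $B^1(\mathbf{v}_k)$, so there is a stable $E$ with a degree $k$ subsheaf, and such $E$ form a locus $Y_k$ of codimension $-1-\chi_k$. I need some $E\in Y_k$, which I would also want $\mu$-stable, with no subsheaf of degree $k+1,\dots,\lceil\mu\rceil-1$. The incidence bound shows $Y_{k'}$ has codimension $-1-\chi_{k'}$, which exceeds $-1-\chi_k$ for $k'>k$, because $\chi$ decreases as $k$ moves toward $\mu$. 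So $Y_k\not\subset\bigcup_{k'>k}Y_{k'}$ by dimension count. I would use the same count to show that the strictly $\mu$-semistable locus has too large a codimension to contain $Y_k$. The main obstacle is this step: the count requires $Y_k$ to have dimension exactly $\dim M-(-1-\chi_k)$, with no component of smaller dimension. I would get this from the determinantal lower bound on the dimension of $B^1$ together with the fact that the fibres of $I_k\to Y_k$ are generically finite. The finiteness holds because a fixed $E$ has only finitely many maximal-degree sub-line-bundles in a top-dimensional family, which I would check by a deformation argument using $\Ext^1(L,E)$.
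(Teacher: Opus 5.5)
There are genuine gaps in both parts.

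\textbf{Part (1).} Your incidence $I_k$, the count via Theorem \ref{BNintro} for $\chi_k\le -2$, and the case $\chi_k\ge 1$ coincide with the paper's proof of Theorem \ref{genericsegre}. Your passage from rank-one subsheaves to maps $L\to E$ is legitimate there only because the general $E$ is locally free (Lemma \ref{locallyfree}).

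The boundary cases $\chi_k\in\{0,-1\}$, which are what actually pin down $k_{\mathbf{v}}$, are left unproved. For $\chi_k=-1$, the equality $\dim I_k=\dim M$ is equally consistent with $\pi_1(I_k)$ being a divisor with one-dimensional fibres. The ``tangent-space computation'' you defer to amounts to a Petri-type injectivity of $H^1(\Oc_X)\to\Ext^1(L,E)$, $\xi\mapsto s\circ\xi$, at a general point of $I_k$, and you give no reason for it. For $\chi_k=0$, the step from ``$B^1(\mathbf{v}_k)$ is a divisor in $M$'' to ``for each $E$ the set of $L$ is a nonempty divisor in $\mathrm{Pic}$'' is also unjustified.

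The paper settles both cases with the wall-crossing of Section \ref{codim}. A general $E$ remains $\sigma_t$-semistable down to $t\ll 1$. Under $\Phi[1]$, the set $\{L:\Hom(L,E)\neq 0\}$ becomes the support curve of a pure one-dimensional sheaf on $\hat{X}$ when $\chi_k=0$, and the finite, nonempty singular set of a rank-one torsion-free sheaf when $\chi_k=-1$.

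A shorter repair is also available. Since $H^2(E\otimes L^{-1})=0$ for every $L$ of degree $k$, the transform of $E\otimes L_0^{-1}$ is a base-change-compatible two-term complex of bundles $K^0\to K^1$ with $\operatorname{rk}K^0-\operatorname{rk}K^1=\chi_k$. If $\Hom(L,E)=0$ for all such $L$, this map would be fibrewise injective. The transform would then be $0$, contradicting that $\Phi$ is an equivalence, or a shifted line bundle, contradicting $\mathbf{v}^2>0$ because $\Phi$ preserves the Mukai pairing and line bundles are isotropic.

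\textbf{Part (2).} This is the ``natural argument'' the paper explicitly considers and rejects in the remark preceding Lemma \ref{nonemptylemma}, and it fails for the reason given there. You control the loci $Y_{k'}$ only through the incidences $I_{k'}$, i.e.\ as sheaves receiving a nonzero map from a degree-$k'$ line bundle. That is the same as having a rank-one subsheaf of degree $k'$ only when $E$ is locally free: a subsheaf $L'\otimes I_Z\subset E$ extends to $L'\to E^{**}$, not to $L'\to E$. Your remark $L\otimes I_Z\subset L$ points the wrong way.

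For $k>k_{\mathbf{v}}$, nothing guarantees that a general point of $Y_k$ is locally free. So an $E\in Y_k$ outside $\bigcup_{k'>k}\pi_1(I_{k'})$ may still contain a singular rank-one subsheaf of degree $k'>k$, and $s_1(E)$ is not determined. In addition, the exact dimension of $Y_k$ and the avoidance of strictly $\mu$-semistable sheaves are only announced.

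The paper avoids all of this. By Lemma \ref{nonemptylemma}, $k_{\mathbf{v}}$ drops by at most one when $\chi$ drops by one, and it equals $\lceil\mu\rceil-1$ at the top admissible $\chi$. Hence each $k$ in the range is the generic value $k_{\mathbf{v}'}$ for some $\mathbf{v}'=(r,mH,\chi')$ with $\chi'\ge\chi$. Take a general (so $\mu$-stable and locally free) $E\in M_X(\mathbf{v}')$ with $L\subset E$ of degree $k$. Perform $\chi'-\chi$ elementary modifications of $0\to L\to E\to E/L\to 0$ along the quotient $E/L$. This yields a $\mu$-stable $E'\subset E$ with $\mathbf{v}(E')=\mathbf{v}$ that still contains $L$. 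Every rank-one subsheaf of $E'$ lies in $E$ and so has degree $\le k$, whether or not $E'$ is locally free.
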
  
\begin{remark}
The condition on $\mathbf{v}$ just ensures that there are $\mu$-stable locally free sheaves in $M_X(\mathbf{v})$. For more details, see Remark \ref{assumelocallyfree}.
\end{remark}

\subsection*{Strategy}
The main technique we will use is wall-crossing and Fourier-Mukai transforms. First introduced by Bridgeland (see \cite{br07},\cite{bridgelandk3},\cite{br09}), Bridgeland stability conditions and wall-crossing have seen large success in dealing with questions regarding the geometry of moduli of sheaves (see, for example, \cite{bm14a},\cite{bayermacri},\cite{bayerli}). The idea is to start with $t_0\gg 0$ so that $\mathcal{M}_{\sigma_{(0,t_0)}}(\mathbf{v}) \cong \mathcal{M}_X(\mathbf{v})$. We then travel along the path $\sigma_{(0,t)}$, showing that, at each wall, we don't lose too many objects in the Brill-Noether loci. Once we reach $\mathcal{M}_{\sigma_{(0,t_1)}}(\mathbf{v})$ for $t_1\ll 1$, we apply a Fourier-Mukai transform that reinterprets the question into one about singular sheaves on the dual abelian surface.

\subsection*{Organization}
In section \ref{prelim}, we recall the necessary background. In section \ref{nonempty}, we use elementary modifications to prove that the Brill-Noether loci are nonempty. In section \ref{codim}, we use wall-crossing and Fourier-Mukai transforms to prove that the Brill-Noether loci have the expected codimension, thus proving Theorem \ref{BNintro}. In section 5, we will study the Segre invariant and prove Theorem \ref{segreintro}.

\subsection*{Acknowledgements}
The author would like to thank his advisor, \.{I}zzet Co\c{s}kun, for his continued patience and for the many invaluable conversations and suggestions throughout the preparation of this work.

\subsection*{AI Disclosure}
This project began as a study of the Segre invariant. The author realized that, to compute the Segre invariant of the generic sheaf, one would need to know that, if $\mathbf{v}$ has $\chi\leq -2$, then the Brill-Noether locus of $M_X(\mathbf{v})$ has codimension $\geq 3$ (see the proof of Theorem \ref{genericsegre}). ChatGPT suggested a strategy for doing this, which the author realized could be extended to prove Theorem \ref{BNmain}. The author then worked with ChatGPT to produce the proof in its current form. In particular, many of the wall-crossing estimates were generated by ChatGPT, and the author carefully checked and simplified these calculations. The author then realized that the proof of Theorem \ref{BNmain} can be applied to study the Segre invariant (see the proof of Theorem \ref{genericsegre}). This and all other results in Section \ref{segresection} were done independently of AI. Finally, ChatGPT was also used to proofread the text.

\section{Preliminaries}\label{prelim}
In this section, we recall the necessary background and results on moduli of sheaves on surfaces on abelian varieties. Much of this section is based on the background section of \cite{coskunnueryoshioka2}. 

\subsection{Mukai Lattice}
Let $X$ be an abelian surface. The even singular cohomology of $X$ 
\[
H^{2*}(X,\ZZ)=\ZZ\oplus H^2(X,\ZZ)\oplus \ZZ
\]
has a natural pairing called the \emph{Mukai pairing}. If $x=\langle x_1,x_2,x_3\rangle$ and $y=\langle y_1,y_2,y_3\rangle$, then 
\[
\langle x,y\rangle =x_2\cdot y_2-x_1y_3-x_3y_1,
\]
where $\cdot$ denotes the cup product. If $E$ is a coherent sheaf on $X$, the \emph{Mukai vector} of $E$ is 
\[
\mathbf{v}(E)=\mathrm{ch}(E).
\]
For coherent sheaves $E$ and $F$, define the \emph{Euler characteristic of the pair} $E$ and $F$ to be
\[
\chi(E,F)=\sum_{i=0}^2(-1)^i\Ext^i(E,F).
\]
The Hirzebruch-Riemann-Roch formula yields the following simple relation:;
\[
\langle \mathbf{v}(E), \mathbf{v}(F)\rangle =-\chi(E,F).
\]

\subsection{Moduli of Sheaves on Surfaces}
Here, we introduce the basics of the moduli of sheaves on Picard rank 1 abelian surfaces. For an introduction to the general theory, we refer the reader to \cite{huybrechtslehn}.\par 
Let $(X,H)$ be a polarized abelian surface of Picard rank 1, where $H$ is the ample generator and $H^2=2d$. Let $E$ be a rank r sheaf of pure-dimension $d$ on $X$ with Mukai vector $\mathbf{v}=(r,mH,\chi)$. The \emph{Hilbert polynomial} $P_E(m)$ and the \emph{reduced Hilbert polynomial} $p_E(m)$ of $E$ (with respect to $H$) are 
\begin{align*}
P_E(m)&=\chi(E(m))=a_d\frac{m^d}{d!}+\text{l.o.t}.,\\
p_E(m)&=\frac{1}{a_d}P_E(m).
\end{align*}
In the case $E$ is torsion-free (i.e. $\dim(E)=2$), we have the following explicit formulas by Riemann-Roch.
\begin{align*}
P_E(m)&=\frac{2dr}{2}k^2+2dmk+\chi \\\
p_E(m)&=k^2+\frac{m}{r}k+\frac{\chi}{2dr}.
\end{align*}
A sheaf $E$ is Gieseker stable (resp. semistable) if for every proper subsheaf $F\subset E$, we have 
\[
p_F(m)<p_E(m)\hspace{0.3cm}\text{(resp. $p_F(m)\leq p_E(m)$)}
\]
for $m\gg0$. The \emph{slope} $\mu(E)=\mu_H(E)$ of $E$ is defined as 
\[
\mu_H(E)=\frac{m}{r}.
\]
A sheaf $E$ is $\mu$-stable (resp. $\mu$-semistable) if, for every proper subsheaf $F$, 
\[
\mu(F)<\mu(E)\hspace{0.3cm}\text{(resp. $\mu(F)\leq \mu(E)$)}
\]
For torsion-free sheaves, we have
\[
\text{$\mu$-stable}\imply \text{Gieseker stable}\imply \text{Gieseker semistable}\imply \text{$\mu$-semistable}
\]
Given $(X,H)$ and a Mukai vector $\mathbf{v}$, there are projective moduli spaces $M_{X,H}(\mathbf{v})$ parametrizing S-equivalence classes of Gieseker semistable sheaves with Mukai vector $\mathbf{v}$ (\cite{gieseker},\cite{maruyama}). We denote the locus of stable sheaves as $M_{X,H}(\mathbf{v})^s$. \par
Also denote by $\mathcal{M}_{X,H}(\mathbf{v})$ and $\mathcal{M}^s_{X,H}(\mathbf{v})$, respectively, the stack of Gieseker semistable sheaves of Mukai vector $\mathbf{v}$ and the stack of Gieseker stable sheaves of Mukai vector $\mathbf{v}$. When $X$ and $H$ are understood, we may drop them from the notation.\par
The main existence theorem for stable sheaves we will use are the following results. 

\begin{theorem}\label{sheavesexistence}
Let $(X,H)$ be a polarized abelian surface of Picard rank 1 . Let $\mathbf{v}$ be a Mukai vector with $\mathbf{v}^2\geq 0$. Assume $\mathbf{v}=k\mathbf{v}_0$ for a primitive Mukai vector $\mathbf{v}_0$. 
\begin{enumerate}
\item (\cite{myy1}, Lemma 5.3.2) If $\mathbf{v}^2>0$, then 
\[
\dim \mathcal{M}_{X,H}(\mathbf{v}) = \mathbf{v}^2+1.
\]
 In particular, $\dim M_{X,H}(\mathbf{v})=\mathbf{v}^2+2$. If $\mathbf{v}^2=0$, then 
 \[
 \dim\mathcal{M}_{X,H}(\mathbf{v}) = k.
 \] 
\item \cite{kuriharayoshioka} Assume $r\geq 2$, $\mathbf{v}^2>0$, and $\mathbf{v}\neq (r,0,-1)e^\eta$ for $\eta\in \mathrm{NS}(X)$. Then there is a $\mu_H$-stable vector bundle with $\mathbf{v}(E)=\mathbf{v}$. 
\end{enumerate}
\end{theorem}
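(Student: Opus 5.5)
Both parts of Theorem~\ref{sheavesexistence} are quoted from the literature. My plan is to reconstruct them from Mukai's deformation theory on abelian surfaces, a stratification of the semistable stack by Jordan--H\"older type, and, for part (2), a dimension count on the non-$\mu$-stable and non-locally-free loci.

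\emph{Part (1), stable locus.} Let $E$ be Gieseker stable. Then $\hom(E,E)=1$, and by Serre duality with $K_X=\mathcal{O}_X$ also $\ext^2(E,E)=1$. Since $\chi(E,E)=-\langle\mathbf{v},\mathbf{v}\rangle$, this gives $\ext^1(E,E)=\mathbf{v}^2+2$. By Mukai's result, obstructions lie in the kernel of the trace map $\Ext^2(E,E)\to H^2(\mathcal{O}_X)$. For stable $E$ this trace map is an isomorphism, so the kernel vanishes. Hence $M^s(\mathbf{v})$ is smooth of dimension $\mathbf{v}^2+2$, and quotienting by the scalar automorphisms $\CC^*$ makes the stable part of the stack $(\mathbf{v}^2+1)$-dimensional.

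\emph{Part (1), semistable locus.} Write $\mathbf{v}=k\mathbf{v}_0$. Every Jordan--H\"older factor has Mukai vector $a\mathbf{v}_0$ with $a\le k$, because Picard rank $1$ forces equal reduced Hilbert polynomials to mean proportional Mukai vectors. I would stratify the stack by the multiset of factor vectors $\mathbf{v}_i=a_i\mathbf{v}_0$ and bound each stratum by
\[
\sum_i \dim\mathcal{M}(\mathbf{v}_i)+\sum_{i<j}\ext^1(F_j,F_i)-\sum_{i<j}\hom(F_j,F_i),
\]
using $\ext^1-\hom=\langle\mathbf{v}_i,\mathbf{v}_j\rangle+\ext^2$. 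When $\mathbf{v}^2>0$, the pairings $\langle\mathbf{v}_i,\mathbf{v}_j\rangle=a_ia_j\mathbf{v}_0^2$ are positive and the quadratic terms $\sum\mathbf{v}_i^2+2\sum_{i<j}\langle\mathbf{v}_i,\mathbf{v}_j\rangle$ dominate. Comparing with $\mathbf{v}^2+1$ then shows every strictly semistable stratum has dimension $\le\mathbf{v}^2$ (up to the bounded $\ext^2$ terms, which vanish for nonisomorphic factors). So the stack has pure dimension $\mathbf{v}^2+1$, and the coarse space has dimension $\mathbf{v}^2+2$.

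When $\mathbf{v}^2=0$, $M(\mathbf{v}_0)$ is a $2$-dimensional abelian surface: it is a Fourier--Mukai partner of $X$, with a point of stack dimension $1$. Every semistable sheaf is then an iterated extension of $k$ stable factors with $\langle\mathbf{v}_0,\mathbf{v}_0\rangle=0$. Choosing $k$ points gives dimension $2k$, the automorphisms contribute $-k$, and self-extensions of equal factors trade $\ext^1$ against $\hom$. The net count should be $k$, with the maximum attained by the general configuration.

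\emph{Part (2), Kurihara--Yoshioka.} Using $\otimes\mathcal{O}(tH)$ and Fourier--Mukai transforms $\Phi$ that preserve Picard rank $1$, I would reduce to a normalized range of $(r,mH,\chi)$. In that range I would show that the closed loci of (a) non-$\mu$-stable sheaves and (b) non-locally-free sheaves have dimension $<\mathbf{v}^2+2$.

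For (a), a $\mu$-destabilizing sequence $0\to F\to E\to G\to 0$ with $\mu(F)=\mu(E)$ forces $\mathbf{v}(F),\mathbf{v}(G)$ to have proportional $(r,c_1)$ parts. The same Ext count as above bounds this stratum by $\mathbf{v}(F)^2+\mathbf{v}(G)^2+\langle\mathbf{v}(F),\mathbf{v}(G)\rangle+O(1)$, which is smaller than $\mathbf{v}^2+2$.

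For (b), $E^{**}/E$ of length $\ell$ gives dimension at most $(\mathbf{v}+(0,0,\ell))^2+2+3\ell-2\ell\cdot(\text{correction})$. Using $(\mathbf{v}+(0,0,\ell))^2=\mathbf{v}^2-2r\ell$, this is $<\mathbf{v}^2+2$ once $r\ge2$, provided the double dual $E^{**}$ moduli is nonempty.

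I expect the main obstacle to be the excluded case $\mathbf{v}=(r,0,-1)e^{\eta}$ and the small-$\mathbf{v}^2$ boundary cases. There the estimates are equalities: for example, every sheaf with $c_1=0$, $\chi=-1$ fails to be locally free or is not $\mu$-stable. I would handle these by an explicit Fourier--Mukai computation that sends $\mathbf{v}$ to a vector already known to admit $\mu$-stable bundles, and check that $\mu$-stability and local freeness are preserved generically, as in \cite{kuriharayoshioka}.
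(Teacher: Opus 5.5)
The paper gives no argument here: both parts are quoted from \cite{myy1} and \cite{kuriharayoshioka}. Your reconstruction therefore has to stand on its own, and it has a real hole. Your local computations are fine: for stable $E$ you get $\ext^1(E,E)=\mathbf{v}^2+2$ with traceless obstructions, so the stable part of the stack is smooth of dimension $\mathbf{v}^2+1$. The Jordan--H\"older count also gives correct upper bounds; for pairwise non-isomorphic stable factors $a_i\mathbf{v}_0$ the codimension is at least $\sum_{i<j}a_ia_j\mathbf{v}_0^2-s+1\geq (s-1)^2$.

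The problem is that every step is either an upper bound on a stratum or a computation at a point whose existence is assumed. Nothing ever produces a sheaf. The equality $\dim\mathcal{M}_{X,H}(\mathbf{v})=\mathbf{v}^2+1$ needs the stable locus to be nonempty. Likewise, in the isotropic case ``$M(\mathbf{v}_0)$ is an abelian surface'' presupposes a stable sheaf of class $\mathbf{v}_0$. The missing input is Yoshioka's existence theorem on abelian surfaces: $M_H(\mathbf{v})\neq\emptyset$ for primitive $\mathbf{v}$ of positive rank with $\mathbf{v}^2\geq 0$. This is the deep ingredient behind the cited results, and your plan does not contain it.

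Granting that theorem, direct sums give semistable sheaves of class $k\mathbf{v}_0$. To get from there to stable sheaves, and to the ``pure dimension'' you assert, you need a lower bound that you never state. Since obstructions are traceless, the stack has local dimension $\geq -\chi(E,E)+1=\mathbf{v}^2+1$ at every point, so strata of dimension $\leq\mathbf{v}^2$ cannot be components. Part (2) inherits the same gap. Showing that the non-$\mu$-stable and non-locally-free loci have dimension $<\mathbf{v}^2+2$ yields a $\mu$-stable bundle only once $M_H(\mathbf{v})$ is known to be nonempty of dimension $\mathbf{v}^2+2$.

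Second, in part (2) your ``$O(1)$'' and ``correction'' terms conceal exactly the content of \cite{kuriharayoshioka}. Three things go wrong with the estimate in (a):
\begin{enumerate}
\item For classes of equal slope, $\langle\mathbf{v}(F),\mathbf{v}(G)\rangle=r_Fr_G\bigl(\tfrac{\mathbf{v}(F)^2}{2r_F^2}+\tfrac{\mathbf{v}(G)^2}{2r_G^2}\bigr)$, which can equal $1$ (take $\mathcal{O}_X$ and $I_p$).
\item Isotropic factors of class $\ell\mathbf{w}$ contribute stack dimension $\ell$, not $\mathbf{w}^2+1$.
\item $\ext^2$ between non-isomorphic $\mu$-stable factors need not vanish, e.g.\ $\Hom(L\otimes I_Z,L)\neq 0$.
\end{enumerate}
So ``smaller than $\mathbf{v}^2+2$'' is precisely the inequality that has to be verified, and it really does fail at $(r,0,-1)e^{\eta}$. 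For $\mathbf{v}=(2,0,-1)$, extensions of $P$ by $I_p\otimes Q$ fill a locus of full dimension.

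In (b), when $\mathbf{v}(E^{**})=k\mathbf{w}$ is isotropic, the relevant stack has dimension $k$ rather than $\mathbf{v}(E^{**})^2+1$. The length-$\ell$ stratum then has codimension $(r-1)\ell+1-k$ with $k\mid r$. This is positive unless $\ell=1$ and $k=r$, i.e.\ $\mathbf{v}=(r,0,-1)e^{\eta}$. The paper runs essentially the same count in case (2) of the proof of Theorem~\ref{singular}. These arithmetic checks are what isolate the excluded vector. Since that vector is excluded by hypothesis, nothing needs ``handling'' there, but you must prove strict inequality in every other case.

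Finally, your reduction to a ``normalized range'' via $\Phi$, and the Fourier--Mukai step at the end, are unjustified. $\Phi$ does not in general preserve local freeness or $\mu$-stability, so it cannot transport the conclusion of (2).
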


\begin{remark}\label{stablesheavesexist}
In this paper, we will mainly be dealing with Mukai vectors with $\chi\leq 0$. In this case, one can check that the $\mathbf{v}\neq (r,0,-1)e^\eta$ assumption in part (2) of the theorem always applies, i.e. if $r\geq 2,\mathbf{v}^2>0$, then $M_{X,H}(\mathbf{v})$ always contains $\mu$-stable sheaves. 
\end{remark}

\subsection{Bridgeland stability on abelian surfaces}
Here, we recall the basics of Bridgeland stability conditions on an abelian surface of Picard rank 1. For the original construction, we refer the reader to \cite{br07},\cite{bridgelandk3}, and \cite{br09}. For a survey, we refer the reader to \cite{bayerli} and \cite{bayer18}. Much of this section is adapted from the background sections of these two papers.\par
Let $X$ be an abelian surface of Picard rank 1, and let $H$ be the ample generator with $H^2=2d$. For $\beta\in \RR$, the $\beta$-twisted Chern character of a coherent sheaf $E$ is defined as
\[
\mathrm{ch}^\beta(E) = e^{-\beta H}.\mathrm{ch}(E),
\]
where $\mathrm{ch}(E)$ denotes the Chern character of $E$. The $\beta$-twisted slope of $E$ is defined as
\[
\mu_{H,\beta}(E)=\begin{cases}
\frac{\mathrm{ch}_1^\beta(E).H}{\mathrm{ch}_0(E)H^2} & \text{ if $\mathrm{ch}_0(E)> 0$}\\
+\infty & \text{ if $\mathrm{ch}_0(E)= 0$}
\end{cases}
\]

This allows one to define the notion of $\beta$-slope semistability analogous to the usual notion. From this, we can construct the following torsion pair.
\begin{align*}
\mathcal{T}^\beta &:= \langle E \;|\; \text{E is $\mu_{H,\beta}$-semistable with $\mu_{H,\beta}(E)>0$}\rangle.\\
\mathcal{F}^\beta &:= \langle E \;|\; \text{E is $\mu_{H,\beta}$-semistable with $\mu_{H,\beta}(E)\leq 0$}\rangle,
\end{align*}
where $\langle \cdot\rangle$ denotes extension closure. Using this, we can define a heart of a bounded t-structure in $\mathrm{D}^b(X)$:
\[
\mathrm{Coh}^\beta(X) := \langle \mathcal{F}^\beta[1],\mathcal{T}^\beta\rangle := \{ E \; |\; H^{-1}(E)\in \mathcal{F}^\beta, H^0(E)\in \mathcal{F}^\beta, H^i(E)=0 \text{ for $i\neq 0,-1$}\}.
\]
Now let $t>0$ and keep $\beta$ as above. We define the \emph{central charge} $Z_{\beta,t}:K(X)\to \CC$ by
\[
Z_{\beta,t}(E)=\langle e^{(\beta+it)H},\mathbf{v}(E)\rangle,
\]
where $\langle\cdot \rangle$ here is the Mukai pairing. 

\begin{defn}
For $t>0,\beta\in \RR$, the pair $\sigma_{(\beta,t)}=(\mathrm{Coh}^\beta(X),Z_{\beta,t}(E))$ is a \emph{Bridgeland stability condition}.
\end{defn}

The significance of this is that, given a stability condition $\sigma_{(\beta,t)}=(\mathrm{Coh}^\beta(X),Z_{\beta,t}(E))$, we can define the following slope function for objects $E\in \mathrm{Coh}^\beta(X)$:
\[
\mu_{(\beta,t)}(E)= \begin{cases}
\frac{-\mathrm{Re}Z_{\beta,t}(E)}{\mathrm{Im}Z_{\beta,t}(E))} & \text{ if $\mathrm{Im}Z_{\beta,t}(E))>0$}\\
+\infty & \text{ if $\mathrm{Im}Z_{\beta,t}(E))=0$}
\end{cases}
\]
This then leads to a familiar notion of slope-stability in $\mathrm{Coh}^\beta(X)$. Unstable objects admit a unique Harder-Narsimhan filtration with semistable factors, and semistable objects admit a Jordan-H\"older filtration with stable factors of the same slope. \par
We note that, for objects $E\in \mathrm{Coh}^\beta(X)$, $Z_{(\beta,t)}$ maps $\mathrm{Coh}^\beta(X)$ to the upper-half plane in $\CC$. Thus, for a stability condition $\sigma_{(\beta,t)}$, we can define the \emph{phase} of $E\in \mathrm{Coh}^\beta(X)$ as
\[
\phi_{(\beta,t)}(E)=\frac{1}{\pi}\mathrm{Arg}(Z_{(\beta,t)}(E))
\]
where $\mathrm{Arg}$ denotes the argument of the complex number $Z_{(\beta,t)}(E)$. \par
For a Bridgeland stabilty condition $\sigma$ on $X$ and a Mukai vector $\mathbf{v}$, let $\mathcal{M}_{\sigma}(\mathbf{v})$ denote the moduli stack of $\sigma$-semistable objects of $\mathrm{Coh}^\beta(X)$, and let $\mathcal{M}_{\sigma}^s(\mathbf{v})$ denote the substack of stable objects. It is a result of Toda in \cite{toda} that $\mathcal{M}_{\sigma}(\mathbf{v})$ is an algebraic stack. There is also a coarse moduli space $M_{\sigma}^s(\mathbf{v})$ of stable object which exists as a smooth, quasi-projective variety. \par
For a fixed Mukai vector $\mathbf{v}$, there is also a wall-and-chamber decomposition of the upper-half plane $\mathcal{H}$ parametrizing stability conditions. There is a \emph{locally finite} set of a walls (real codimension 1 submanifolds) in $\mathcal{H}$ such that
\begin{enumerate}
\item When $\sigma$ varies in a chamber (a connected component of the complement of the union of walls), the set of $\sigma$-stable and $\sigma$-semistable objects do not change. 
\item When $\sigma$ lies on a wall, there is a $\sigma$-semistable object in $E$ that is unstable in one of the adjacent chambers and semistable in the other. 
\item For $\beta<\mu(\mathbf{v})$ and $t \gg 0$, an object of $\mathrm{Coh}^\beta(X)$ is $\sigma_{(\beta,t)}$ semistable if and only if it is a Gieseker semistable sheaf. In other words, for $\sigma_{(\beta,t)}$, the Bridgeland moduli space recovers the Gieseker moduli space.
\end{enumerate}

More results about these Bridgeland moduli spaces will be stated as needed.

\subsection{Fourier-Mukai transforms}
Here, we introduce the necessary background on Fourier-Mukai transforms on abelian surfaces, first introduced by Mukai in \cite{mukai81}. We base this section on the Fourier-Mukai section of \cite{coskunnueryoshioka2}.\par
Let $X$ be a Picard rank 1 abelian surface with ample generator $H$, and let $\hat{X}$ be the dual abelian surface. Let $\mathcal{P}^{\vee}$ denote the dual Poincare line bundle on $X\times \hat{X}$, and let $p,\hat{p}$ be the two projections from $X\times \hat{X}$. The \emph{Fourier-Mukai transform} $\Phi:\mathrm{D}^b(X)\to \mathrm{D}^b(X)$ is defined as
\[
\Phi(E)=\mathbf{R}\hat{p}_*(\mathcal{P}^{\vee}\otimes p^*(E))
\]
The Fourier-Mukai transform induces an isometry of Hodge structure
\begin{align*}
H^{2*}(X,\ZZ)&\to H^{2*}(\hat{X},\ZZ)\\
(r,\xi,a)&\mapsto (a,-\hat{\xi},r)
\end{align*}
where $\hat{\xi}$ is the Poincaré dual of $\xi$. In our applications, we will mainly deal with the shifted functor $\Phi[1]$. In particular, we note that if $E$ has Mukai vector $\mathbf{v}(E)=(r,mH,\chi)$, then 
\[
\mathbf{v}(\Phi[1](E))=(-\chi,m\hat{H},-r),
\]
where $\hat{H}$ is the ample generator on $\hat{X}$. We will need the following facts about $\Phi$.

\begin{theorem}\label{fouriermukai}
Let $X$ and $\Phi$ be as above.
\begin{enumerate}
\item (\cite{mukai81}, Theorem 2.2) The Fourier-Mukai transform $\Phi$ is an equivalence of categories between $\mathrm{D}^b(X)$ and $\mathrm{D}^b(\hat{X})$.
\item (\cite{mukai81}, Proposition 2.7) Let $E\in \mathrm{Coh}(X)$. Then for $x\in \hat{X}$,
\[
H^j(X,E\otimes \mathcal{P}_t)=\Ext_{\hat{X}}^{j+2-i}(\Oc_{-x},\Phi[i](E)).
\]
\item (\cite{yy14}, Theorem 3.8) 
$\Phi[1]$ induces an isomorphism
\[
\mathcal{M}_{(0,t)}(r,mH,\chi)\cong\mathcal{M}_{(0,\frac{1}{dt})}(-\chi,m\hat{H},-r)
\]
where $H^2=2d$.
\end{enumerate}
\end{theorem}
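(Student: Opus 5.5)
The statement collects three standard facts, so the plan is to reduce each to a kernel computation on $X\times\hat{X}$ and quote the deeper inputs where needed.

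\textbf{Part (1).} I would follow Mukai's argument. Write $\hat{\Phi}:\mathrm{D}^b(\hat{X})\to\mathrm{D}^b(X)$ for the transform with kernel $\mathcal{P}$, viewed on $\hat{X}\times X$. The composite $\hat{\Phi}\circ\Phi$ is again an integral functor. Its kernel on $X\times X$ is $\mathbf{R}p_{13*}\big(p_{12}^*\mathcal{P}^\vee\otimes p_{23}^*\mathcal{P}\big)$. By the theorem of the square this integrand is the pullback of the Poincaré bundle along $(x,y)\mapsto y-x$. By base change, its pushforward is supported where $\mathcal{P}_{y-x}$ is trivial. The standard vanishing $H^i(\hat{X},\mathcal{P}_a)=0$ for all $i$ and all $a\neq 0$, together with $H^*(\hat{X},\Oc)=\bigwedge^*\CC^2$ at $a=0$, then shows the kernel is the structure sheaf of the antidiagonal shifted by $[-2]$. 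Hence $\hat{\Phi}\circ\Phi\cong(-1)^*[-2]$, and symmetrically for $\Phi\circ\hat{\Phi}$. This exhibits $\Phi$ as an equivalence.

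\textbf{Part (2).} This is formal once (1) is available. For a point $x\in\hat{X}$ and $\mathcal{P}_x=\mathcal{P}|_{X\times\{x\}}$, the projection formula and base change give
\[
\mathbf{R}\Gamma(X,E\otimes\mathcal{P}_x)=\mathbf{R}\Hom_{\hat{X}}(\Oc_x,\Phi(E)^\vee)^{\vee}
\]
(up to the sign conventions on $\mathcal{P}$ versus $\mathcal{P}^\vee$). Alternatively, one computes $\Hom_{\hat{X}}(\Oc_{-x},\Phi(E)[k])$ using the adjunction for $\Phi$, whose adjoint is $\hat{\Phi}$ up to a twist and shift. Serre duality on $\hat{X}$ (trivial canonical bundle, dimension $2$) then converts the result into the stated identity, and the index $j+2-i$ records the shift $[i]$ together with the dimension $2$. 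The only care needed is bookkeeping: $x$ versus $-x$, and the direction of the shift.

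\textbf{Part (3).} This is the substantial part, and I expect it to be the main obstacle. I would not reprove it but cite \cite{yy14}. The structure of their argument is as follows.

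First, use the action on cohomology, $(r,\xi,a)\mapsto(a,-\hat{\xi},r)$, to check that $\Phi[1]$ carries the central charge $Z_{0,t}$ to a positive real multiple of $Z_{0,1/(dt)}$, possibly composed with an orientation-preserving element of $\widetilde{\mathrm{GL}}_2^+(\RR)$. Concretely, $\langle e^{itH},\mathbf{v}\rangle$ has real part $-r\,d t^2+\chi$ (up to sign), and these match after rescaling by $dt^2$ and swapping $r\leftrightarrow-\chi$.

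Second, and this is the hard step, show that $\Phi[1]$ sends the heart $\mathrm{Coh}^0(X)$ to a heart that is a tilt of $\mathrm{Coh}^0(\hat{X})$ compatible with this $\widetilde{\mathrm{GL}}_2^+$ element. One does this by checking that $\mu$-semistable sheaves of positive slope and shifted ones of nonpositive slope land in the correct phase ranges. This uses the WIT properties of $\mu$-stable sheaves with respect to $\Phi$ and a Bogomolov-type inequality.

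Once the stability conditions are identified, $\sigma$-semistability is preserved by the equivalence. Since $\Phi[1]$ acts in families, the isomorphism of moduli stacks follows.
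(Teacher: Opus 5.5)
Your proposal is correct and follows the paper, which gives no argument of its own here and simply cites Mukai for (1)--(2) and Yanagida--Yoshioka for (3); your sketches are the standard arguments behind those citations, and your central-charge check $dt^2\,Z_{0,1/(dt)}(\Phi[1]\mathbf{v})=Z_{0,t}(\mathbf{v})$ is right. There are two bookkeeping slips, neither affecting the conclusions: with kernels $\mathcal{P}^\vee$ then $\mathcal{P}$ the integrand is pulled back along $(x,y)\mapsto y-x$, so the composite kernel lives on the diagonal and $\hat{\Phi}\circ\Phi\cong[-2]$ (the antidiagonal and $(-1)^*$ arise when both kernels are $\mathcal{P}$); and your displayed formula in (2) is missing a shift by $2$, whereas the adjunction route (left adjoint $\Phi_{\mathcal{P}[2]}$, sending $\Oc_{-x}$ to $\mathcal{P}_{-x}[2]$) gives $\Ext^{j+2}_{\hat{X}}(\Oc_{-x},\Phi(E))\cong H^j(X,E\otimes\mathcal{P}_x)$ at once.
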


\subsection{Elementary modifications}\label{elemmodprelim}
In this section, we recall some facts about elementary modifications. We base much of this section on Section 5 of \cite{ch16}.

\begin{defn}
Let $E$ be a coherent sheaf and let $E\to \Oc_p$ be a surjective homomorphism. Then the kernel
\[
0\to E'\to E\to \Oc_p\to 0
\]
is called an \emph{elementary modification} of $E$.
\end{defn}
If $E$ has positive rank, we note the following relations:
\begin{enumerate}
\item $\rank(E')=\rank(E)$
\item $c_1(E')=c_1(E)$
\item $\chi(E')=\chi(E)-1$.
\end{enumerate}

We need the following lemma, whose proof is immediate.
\begin{lemma}
If $E$ is $\mu$-(semi)stable, then $E'$ is $\mu$-(semi)stable.
\end{lemma}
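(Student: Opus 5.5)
We need to prove: if $E$ is $\mu$-(semi)stable and $0\to E'\to E\to \Oc_p\to 0$ is an elementary modification, then $E'$ is $\mu$-(semi)stable. The plan is to compare subsheaves of $E'$ with subsheaves of $E$ and use that slope depends only on rank and $c_1$, both of which are unchanged by the modification away from codimension two. First, $E'$ is torsion-free when $E$ is: any torsion subsheaf of $E'$ would inject into $E$ via $E'\subset E$. Recall that we already noted $\rank(E')=\rank(E)$ and $c_1(E')=c_1(E)$, so $\mu(E')=\mu(E)$.

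Next, let $F\subset E'$ be a proper nonzero subsheaf with $0<\rank F<\rank E'$ (subsheaves of full rank do not affect $\mu$-stability by the usual convention for torsion-free sheaves). Composing with the inclusion $E'\hookrightarrow E$ realizes $F$ as a subsheaf of $E$ of the same rank and first Chern class, hence with the same slope $\mu(F)$. It is a proper subsheaf of $E$ of rank strictly between $0$ and $\rank E$. Then $\mu$-stability of $E$ gives $\mu(F)<\mu(E)=\mu(E')$, and $\mu$-semistability gives $\mu(F)\le\mu(E)=\mu(E')$. This is exactly the required inequality for $E'$.

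There is essentially no obstacle. The only point requiring care is the convention that $\mu$-stability is tested on subsheaves of smaller rank, or equivalently on saturated subsheaves. If one instead tests all proper subsheaves, note that a full-rank $F\subset E'$ maps to a full-rank subsheaf of $E$, and the same convention for $E$ applies verbatim. The key fact used throughout is that $\Oc_p$ is supported in codimension two, so it contributes nothing to $c_1$ and slope is preserved both for $E'$ versus $E$ and for $F$ viewed in either sheaf.
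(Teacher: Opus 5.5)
Your argument is correct and is exactly the immediate proof the paper has in mind; the paper omits it. A subsheaf of $E'$ is a subsheaf of $E$ with the same rank and slope, and $\mu(E')=\mu(E)$ because the cokernel $\Oc_p$ is supported in codimension two. You are also right to test only subsheaves of rank strictly between $0$ and $\rank E$, which is the correct reading of the paper's somewhat loose definition of $\mu$-stability.
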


\begin{remark}
Elementary modifications do not in general preserve Gieseker (semi)stability. Thus, we need the existence of $\mu$-stable sheaves in our proof.
\end{remark}

Finally, given a short exact sequence of sheaves, there is a natural induced short exact sequence involving elementary modifications. 

\begin{lemma}
Let 
\[
0\to F\to E\to G\to 0
\]
be a short exact sequence of coherent sheaves. Let $G'$ be the elementary modification of $G$ associated to a surjective map $G\to \Oc_p$, and let $E'$ be the elementary modification associated to the composition $E\to G\to \Oc_p$. Then there is a natural short exact sequence 
\[
0\to F\to E'\to G'\to 0,
\]
called the \emph{elementary modification} of the original sequence.
\end{lemma}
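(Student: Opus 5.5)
We must construct the sequence $0\to F\to E'\to G'\to 0$ directly. Write $\pi\colon E\to G$ for the given surjection and $q\colon G\to \Oc_p$ for the chosen surjection. Then $E'=\ker(q\circ\pi)$ and $G'=\ker(q)$. The composition $q\circ\pi$ is surjective, being a composition of surjections, so $E'$ really is an elementary modification of $E$.

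The plan is to apply the snake lemma, or equivalently a $3\times 3$ diagram chase, to the following map of short exact sequences:
\[
\begin{tikzcd}
0 \arrow[r] & E' \arrow[r]\arrow[d,"\pi|_{E'}"] & E \arrow[r,"q\circ\pi"]\arrow[d,"\pi"] & \Oc_p \arrow[r]\arrow[d,equal] & 0\\
0 \arrow[r] & G' \arrow[r] & G \arrow[r,"q"] & \Oc_p \arrow[r] & 0.
\end{tikzcd}
\]
The right square commutes by definition. The restriction $\pi|_{E'}$ lands in $G'$, because for a local section $s$ of $E'$ we have $q(\pi(s))=0$, so $\pi(s)\in\ker q=G'$.

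Now take kernels and cokernels of the vertical maps. The middle map $\pi$ has kernel $F$ and cokernel $0$. The right map is the identity, so its kernel and cokernel both vanish. The snake lemma then gives an exact sequence
\[
0\to\ker(\pi|_{E'})\to F\to 0\to \mathrm{coker}(\pi|_{E'})\to 0\to 0 .
\]
From this, $\ker(\pi|_{E'})\cong F$ via the inclusion $F\subset E$; indeed $F\subset E'$ because $q\pi(F)=0$. Also $\mathrm{coker}(\pi|_{E'})=0$. Hence $0\to F\to E'\to G'\to 0$ is exact.

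Naturality holds because every map involved is a restriction of the original maps $F\to E\to G$. There is no real obstacle; the only point to check is that $F\subset E'$ and that $\pi|_{E'}$ maps onto $G'$, and both come out of the snake lemma above.
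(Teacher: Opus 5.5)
Your proof is correct and is essentially the paper's argument: the paper also just applies the snake lemma to the same $3\times 3$ diagram, with rows $0\to F\to E'\to G'\to 0$ and $0\to F\to E\to G\to 0$, and columns $0\to E'\to E\to \Oc_p\to 0$ and $0\to G'\to G\to\Oc_p\to 0$. The difference is only cosmetic: you run the snake lemma on the map of columns, reading off $\ker(\pi|_{E'})\cong F$ and $\mathrm{coker}(\pi|_{E'})=0$, while the paper's diagram presents the new row as the kernels of the surjection from $0\to F\to E\to G\to 0$ onto $0\to 0\to\Oc_p\to\Oc_p\to 0$; both readings give the same exact sequence.
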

\begin{proof}
This follows from the following Snake lemma diagram:
\[\begin{tikzcd}
	&& 0 & 0 & \\
	0 & F & {E'} & {G'} & 0 \\
	0 & F & E & G & 0 \\
	&& {\Oc_p} & {\Oc_p} \\
	&& 0 & 0
	\arrow[from=1-3, to=2-3]
	\arrow[from=1-4, to=2-4]
	\arrow[from=2-1, to=2-2]
	\arrow[from=2-2, to=2-3]
	\arrow[equals, from=2-2, to=3-2]
	\arrow[from=2-3, to=2-4]
	\arrow[from=2-3, to=3-3]
	\arrow[from=2-4, to=2-5]
	\arrow[from=2-4, to=3-4]
	\arrow[from=3-1, to=3-2]
	\arrow[from=3-2, to=3-3]
	\arrow[from=3-3, to=3-4]
	\arrow[from=3-3, to=4-3]
	\arrow[from=3-4, to=3-5]
	\arrow[from=3-4, to=4-4]
	\arrow[equals, from=4-3, to=4-4]
	\arrow[from=4-3, to=5-3]
	\arrow[from=4-4, to=5-4]
\end{tikzcd}\]

\end{proof}

\section{Proof of Nonemptiness}\label{nonempty}
In this section, we prove the following theorem.

\begin{theorem}\label{nonemptiness}
Let $(X,H)$ be a polarized abelian surface of Picard rank 1, with $H$ the ample generator. Let $\mathbf{v}=(r,mH,\chi)$ be the Mukai vector with $r,m\geq 1$ and $\chi\leq 0$. Then the Brill-Noether locus is nonempty and contains $\mu$-stable sheaves.
\end{theorem}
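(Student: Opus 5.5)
We construct the required sheaves by starting from a $\mu$-stable sheaf with a nonzero section and lowering $\chi$ through elementary modifications. Fix $\mathbf{v}=(r,mH,\chi)$ with $r,m\geq 1$ and $\chi\leq 0$.

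The first step is to find a $\mu$-stable sheaf $E_0$ of rank $r$ and $c_1=mH$ with $\chi(E_0)\geq 1$ and $H^0(E_0)\neq 0$. For rank one this is immediate: take $E_0=L$ with $c_1(L)=mH$ ample. Then $\chi(L)=m^2d>0$ and $H^1(L)=H^2(L)=0$, so $h^0(L)=m^2d\geq 1$. For $r\geq 2$, choose the largest integer $\chi_0$ such that $\mathbf{v}_0=(r,mH,\chi_0)$ satisfies $\mathbf{v}_0^2=2dm^2-2r\chi_0>0$. Then $\chi_0=\lceil dm^2/r\rceil-1\geq 0$, and we may take $\chi_0\geq 1$ unless $dm^2\leq r$. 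By Theorem~\ref{sheavesexistence}(2), together with a check that $\mathbf{v}_0\neq(r,0,-1)e^\eta$ (this holds when $m\not\equiv 0$ mod $r$, and needs separate care otherwise), there is a $\mu$-stable bundle $E_0$ with $\mathbf{v}(E_0)=\mathbf{v}_0$. Since $\mu(E_0)>0$, Serre duality and stability give $H^2(E_0)=\Hom(E_0,\Oc)^\vee=0$, so $h^0(E_0)\geq\chi_0\geq 1$.

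When $\chi_0$ cannot be taken positive, I would use a direct extension instead. Take a nontrivial extension $0\to\Oc\to E_0\to F\to 0$ with $F$ a $\mu$-stable sheaf of rank $r-1$ and $c_1=mH$ (or a suitable torsion-free sheaf), chosen so that $E_0$ is $\mu$-stable. Such an $E_0$ has a section by construction. Its $\mu$-stability would follow from Picard rank 1 and the slope inequality: $\Oc$ has slope $0<\mu$, and any destabilizing subsheaf would have to map nontrivially to $F$, which we rule out by choosing the extension generally. This is the step I expect to be the main obstacle, since one must verify that the relevant $\Ext^1$ is nonzero and that a general extension is $\mu$-stable. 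This can perhaps be avoided by instead citing the $m=1$ case from \cite{bayerli} combined with twisting.

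Finally, once $E_0$ is $\mu$-stable with a nonzero section $s$ and $\chi(E_0)\geq\chi$, perform $\chi(E_0)-\chi$ successive elementary modifications at points $p$ away from the zero locus of $s$ and at which $s$ lies in the kernel. Concretely, choose a surjection $E_0\to\Oc_p$ killing $s(p)$; this is possible because $r\geq 2$ leaves room in the fiber. For $r=1$, take instead $p$ in the zero locus of the section, i.e. use ideal sheaves $L\otimes I_Z$ with $Z\subset$ a curve in $|L|$. This is possible since the curve is infinite. The section $s$ then lifts to each modification. By the Lemma above, each modification stays $\mu$-stable, and hence Gieseker stable. The Mukai vector drops only in $\chi$, so after the prescribed number of modifications we obtain a $\mu$-stable $E$ with $\mathbf{v}(E)=\mathbf{v}$ and $H^0(E)\neq 0$, as required.
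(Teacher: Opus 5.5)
There is a genuine gap in how you get the starting sheaf. Your elementary-modification step is fine and matches the paper's: a surjection $E_0\to\Oc_p$ killing $s(p)$ is the same as modifying $0\to\Oc_X\to E_0\to F\to 0$ along $F\to\Oc_p$. Your rank-one case and the case $r\geq 2$, $dm^2>r$, $r\nmid m$ are also correct.

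The problem is the remaining cases, where a $\mu$-stable starting sheaf with $\chi\geq 1$ need not exist. A $\mu$-stable sheaf is simple, so its Mukai vector satisfies $2dm^2-2r\chi\geq 0$. Hence if $r\geq 2$ and $dm^2<r$ (e.g.\ $d=1$ and $(r,m)=(2,1)$ or $(5,2)$), there is \emph{no} $\mu$-stable sheaf of rank $r$ and $c_1=mH$ with $\chi\geq 1$ at all. For $d=1$, $r=m=2$, the only vector with $\chi\geq 1$ and $\mathbf{v}^2>0$ is $(2,2H,1)=(2,0,-1)e^{H}$, which is exactly the vector excluded in Theorem~\ref{sheavesexistence}(2). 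The borderline $dm^2=r$ is not covered either, since there the candidate $(r,mH,1)$ is isotropic. In these cases the statement at $\chi=0$ is the whole content, and your fallback does not prove it:
\begin{itemize}
\item For an extension $0\to\Oc_X\to E_0\to F\to 0$ with $\chi(E_0)=0$ you need $\Ext^1(F,\Oc_X)\cong H^1(F)^\vee\neq 0$. Since $H^2(F)=0$ and $\chi(F)=0$, this is equivalent to $H^0(F)\neq 0$, which is the same Brill--Noether question in rank $r-1$.
\item The $\mu$-stability of a general extension is only asserted, not proved.
\item Citing the $m=1$ case and twisting does not help. Twisting by $\Oc_X(tH)$ changes $c_1$ by $rtH$, so from $c_1=H$ you only reach $m\equiv 1 \pmod r$, never e.g.\ $(r,m)=(5,2)$ or $(2,2)$.
\end{itemize}

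The missing idea, which is how the paper proceeds, is to start directly at $\chi=0$ and twist by $\mathrm{Pic}^0(X)$ instead of looking for positive $\chi$. Take any $\mu$-stable $E$ with $\mathbf{v}(E)=(r,mH,0)$; these exist by Remark~\ref{stablesheavesexist} for $r\geq 2$, and as $I_Z\otimes L$ for $r=1$. Since the Fourier--Mukai transform $\Phi$ is an equivalence, $\Phi(E)\neq 0$, so $H^i(E\otimes P)\neq 0$ for some $i$ and some $P\in\mathrm{Pic}^0(X)$. Serre duality and stability give $H^2(E\otimes P)=0$, and $\chi(E\otimes P)=0$ then forces $h^0(E\otimes P)=h^1(E\otimes P)>0$. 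Because $E\otimes P$ is still $\mu$-stable with the same Mukai vector, your elementary modifications then handle every $\chi\leq 0$ uniformly. This needs no case analysis on $r,m,d$ and no Kurihara--Yoshioka input at positive $\chi$.
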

\begin{proof}
We start with the case $\chi=0$. Pick some $E\in M_X(r,mH,0)$ that is $\mu$-stable (see Remark \ref{stablesheavesexist}). We claim that there exists a degree 0 line bundle $L$ such that $H^0(E\otimes L)\neq 0$. For this, apply the Fourier-Mukai transform $\Phi$. Since $\Phi$ is an equivalence, $\Phi(E)\neq 0$, which means $H^i(E\otimes L)\neq 0$ for some $i$ and $L\in \hat{X}$. Note that $H^2(E\otimes L)=0$ for all $L$ by Serre duality and stability. Thus, there exists $L$ such that either $H^0(E\otimes L)$ or $H^1(E\otimes L)\neq 0$. If it is the former, we're done, so suppose $H^1(E\otimes L)\neq 0$. 
But $H^2(E\otimes L)=0$ and $\chi(E\otimes L)= 0$ implies that 
\[
H^0(E\otimes L)\neq 0.
\]
Since $L$ is degree 0, $E\otimes L\in M_X(r,mH,0)$, so $E\otimes L$ is a $\mu$-stable sheaf in the Brill-Noether locus. \par
To extend this to arbitrary $\chi\leq 0$, start with a Mukai vector $\mathbf{v}=(r,mH,\chi)$ satisfying the assumptions of the theorem. Let $E$ be a $\mu$-stable sheaf in the Brill-Noether locus of $M_X(r,mH,0)$ and consider the sequence 
\[
0\to \Oc_X\to E\to F\to 0
\]
induced by the section. Taking $-\chi$ elementary modifications of this sequence yields a sequence 
\[
0\to \Oc_X\to E'\to F'\to 0,
\]
where $E'$ is $\mu$-stable of Mukai vector $\mathbf{v}=(r,mH,\chi)$. Thus, $E'$ is in the Brill-Noether locus of $M_X(\mathbf{v})$.
\end{proof}

\section{Proof of Codimension}\label{codim}
In this section, we prove that the Brill-Noether locus has the expected codimension. The proof will be an induction on $m$. For the induction to work, we actually need to prove a stronger statement about the Bridgeland moduli spaces.

\begin{defn}
Let $\sigma_t=\sigma_{(0,t)}$ be a stability condition. The \emph{Brill-Noether locus} in $\mathcal{M}_{\sigma_t}(\mathbf{v})$ is the locus of all $E\in \mathcal{M}_{\sigma_t}(\mathbf{v})$ where $\Hom(\Oc_X,E)\neq0$ in the derived category.
\end{defn}

\begin{theorem}\label{BNmain}
Let $(X,H)$ be a polarized abelian surface of Picard rank 1, with $H$ the ample generator. Let $\mathbf{v}=(r,mH,\chi)$ be a Mukai vector with $r,m\geq 1$ and $\chi\leq 0$. Let $\sigma_t=\sigma_{(0,t)}$ be a stability condition not on a wall. Then the Brill-Noether locus in $\mathcal{M}_{\sigma_t}(\mathbf{v})$ has codimension at least $1-\chi$. \par
In particular, since it is known that $\mathcal{B}^1_X(\mathbf{v})\subset \mathcal{M}_X(\mathbf{v})$ has codimension $\leq 1-\chi$, applying the theorem to $t\gg0$ implies that $\mathcal{B}^1_X(\mathbf{v})\subset \mathcal{M}_X(\mathbf{v})$ has the expected codimension of $1-\chi$.
\end{theorem}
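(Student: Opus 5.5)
We argue by induction on $m$, following the Strategy: move along the ray $\sigma_t=\sigma_{(0,t)}$ from $t\gg0$ (where $\mathcal{M}_{\sigma_t}(\mathbf{v})=\mathcal{M}_X(\mathbf{v})$) down to $t\ll1$, and then apply $\Phi[1]$. Fix $\mathbf{v}=(r,mH,\chi)$ with $\chi\le 0$. The base of the argument is $t\ll 1$. By Theorem \ref{fouriermukai}(3), $\Phi[1]$ identifies $\mathcal{M}_{\sigma_t}(\mathbf{v})$ with $\mathcal{M}_{(0,1/dt)}(-\chi,m\hat H,-r)$ on $\hat X$, and for $t\ll 1$ the latter is a Gieseker moduli stack (or, when $\chi=0$, a stack of torsion sheaves supported on curves in $|m\hat H|$). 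By Theorem \ref{fouriermukai}(2), $\Hom(\Oc_X,E)=\Ext^{1}(\Oc_{0},\Phi[1](E))$, up to the sign convention for the point. So the Brill--Noether locus becomes the locus of sheaves $G$ with $\Ext^1(\Oc_0,G)\ne 0$. By Serre duality this is $\Ext^1(G,\Oc_0)^\vee\neq 0$, i.e.\ $G$ fails to be locally free at $0$ (for torsion $G$, the support condition $0\in\mathrm{Supp}\,G$ together with singularity at $0$). For a rank $-\chi\ge 1$ sheaf $G$, the locus non-locally-free at a \emph{fixed} point has codimension $\ge 1-\chi$: the non-locally-free sheaves at $p$ arise as kernels of $G^{\vee\vee}\to Q$ with $Q$ supported at $p$, and a dimension count of the Quot scheme of length-$\ell$ quotients at a fixed point of a rank $s=-\chi$ bundle gives roughly $\ell(s+1)-1$ parameters, against the $2\ell s$ lost in $\mathbf{v}^2$. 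This gives codimension $\ge \ell(s-1)+1\ge s+1$ for $\ell\ge1$. The case $\chi=0$ is handled directly, since the condition $0\in$ support is a divisor condition.

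The inductive step is to cross the finitely many walls for $\mathbf{v}$ along $\beta=0$ between $t\ll1$ and $t\gg0$, and show that at each wall the Brill--Noether locus on the new side is controlled. Suppose $E$ is $\sigma_{t_+}$-semistable with $\Hom(\Oc_X,E)\ne0$ but becomes unstable past the wall $W$. Then $E$ lies in a locus of extensions $0\to A\to E\to B\to 0$ with $A,B$ semistable on $W$ of Mukai vectors $\mathbf{a},\mathbf{b}$, where $\mathbf{a}+\mathbf{b}=\mathbf{v}$. Since $\Hom(\Oc_X,E)\neq 0$ forces $\Hom(\Oc_X,A)\ne0$ or $\Hom(\Oc_X,B)\ne0$, and $\mathbf{a},\mathbf{b}$ have $c_1$ of degree $<m$ (the $\mathrm{Im}Z$ constraint at $\beta=0$ gives $0<m_a,m_b<m$ when both are in $\mathrm{Coh}^0$), the induction hypothesis, together with $\mathbf{v}^2$-dimension formulas from Theorem \ref{sheavesexistence}(1), bounds the Brill--Noether part of $\mathcal{M}(\mathbf{a})$ or $\mathcal{M}(\mathbf{b})$. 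The standard estimate for the destabilized locus is
\[
\dim\{\text{extensions}\}\le \dim\mathcal{M}(\mathbf{a})+\dim\mathcal{M}(\mathbf{b})+\ext^1(B,A)-\hom(B,A),
\]
and $\ext^1-\hom=\langle\mathbf{a},\mathbf{b}\rangle$ (using $\Hom(A,B)=0$ and $\Ext^2(B,A)=\Hom(A,B)^\vee=0$). Comparing with $\mathbf{v}^2+1=\mathbf{a}^2+\mathbf{b}^2+2\langle\mathbf{a},\mathbf{b}\rangle+1$ shows that the destabilized Brill--Noether locus has codimension at least $\langle \mathbf{a},\mathbf{b}\rangle$ plus the inductive codimension of the Brill--Noether factor, namely $1-\chi(\mathbf{a})$ or $1-\chi(\mathbf{b})$. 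We then need $\langle\mathbf{a},\mathbf{b}\rangle-1+(1-\chi_a)\ge 1-\chi$, i.e.\ $\langle\mathbf{a},\mathbf{b}\rangle\ge \chi_b+1$ in the relevant case. For walls we must also allow factors with $\chi>0$ (where the locus is everything, codimension $0$) and factors of rank $\le 0$ in $\mathrm{Coh}^0$, such as shifted sheaves in $\mathcal F^0[1]$ (which have no $\Hom$ from $\Oc_X$ when rank $<0$ and $\Hom(\Oc_X,F[1])=\Ext^1$ issues are absent), and torsion sheaves.

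The main obstacle is this wall-crossing numerical estimate. We must show that for every wall along $\beta=0$ for $\mathbf{v}$ with $\chi\le 0$, the pairing $\langle\mathbf{a},\mathbf{b}\rangle$ is large enough compared to $\chi_a,\chi_b$. We would use the wall equation, namely that $Z_{0,t}(\mathbf{a})$ and $Z_{0,t}(\mathbf{v})$ are aligned: $(\chi_a - r_a d t^2)\,m=(\chi-r d t^2)\,m_a$. Combined with the Bogomolov-type inequalities $\mathbf{a}^2,\mathbf{b}^2\ge0$ for semistable objects, this lets us express $\langle\mathbf{a},\mathbf{b}\rangle$ in terms of $m_a,m_b,t$ and show that it dominates $\max(\chi_a,\chi_b)+1-\chi$. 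Finally, the sheaf case follows at $t\gg0$ together with the determinantal upper bound.
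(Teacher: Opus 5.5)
Your outline follows the paper's architecture: induction on $m$, crossing walls along $\beta=0$, then $\Phi[1]$ at $t\ll1$ to reduce to sheaves singular at the origin of $\hat X$. But the inductive step, which is the real content, is not carried out, and the framework you set up for it would not work as stated.

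\textbf{The dimension count at a wall.} For $0\to A\to E\to B\to 0$ you use $\Hom(A,B)=0$ and $\dim\mathcal{M}(\mathbf{a})=\mathbf{a}^2+1$, with $A,B$ semistable on the wall. Neither holds in general.
\begin{itemize}
\item Two $\sigma$-semistable objects of the same phase can have nonzero Homs.
\item The formula of Theorem \ref{modulidim} holds only off the walls, and for isotropic $\mathbf{a}=\ell\mathbf{w}$ it gives $\ell$, not $\mathbf{a}^2+1$.
\item If you instead take $A$ to be the first $\sigma_-$-HN factor, you do get $\Hom(A,B)=0$, but then $B$ is no longer semistable and its stack dimension is not controlled.
\end{itemize}
The paper instead stratifies by the full $\sigma_-$-HN type and applies Theorem \ref{nueryoshioka}. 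This gives codimension $\ge C=\sum_{i<j}\langle\mathbf{v}_i,\mathbf{v}_j\rangle-\sum_i\delta_i+1$, and $\ge C+c_i$ when the $i$th factor carries the section (Lemmas \ref{homF} and \ref{wallcrossing2}).

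\textbf{The factors the induction cannot reach.} Your induction hypothesis only applies to factors with $r_i\ge1$, $\chi_i\le0$, $\mathbf{v}_i^2>0$. HN factors may instead be isotropic, torsion, of negative rank, or have $\chi_i>0$. Your claim that negative-rank factors have no maps from $\Oc_X$ is false: $\Hom(\Oc_X,(I_Z\otimes L)[1])=H^1(I_Z\otimes L)\cong H^0(\Oc_Z)\neq0$ for $L$ anti-ample and $Z\neq\emptyset$.

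The missing idea is the paper's case split.
\begin{itemize}
\item If some factor is isotropic (Lemma \ref{isotropicpair}, Proposition \ref{isotropicmain}), or has $r_i<0$ or $\chi_i\ge0$ (Proposition \ref{positive1}), the phase equation of Lemma \ref{phaseequation} together with $\mathbf{v}_i^2\ge0$ shows that \emph{all} destabilized objects form a locus of codimension $\ge1-\chi$. No Brill--Noether input is needed.
\item Only when every factor has $\mathbf{v}_i^2>0$, $r_i\ge0$, $\chi_i\le-1$ is induction invoked. There the inequalities $\chi_j-\chi>0$ and $r-r_j\ge0$ give $\langle\mathbf{v},\mathbf{v}_i\rangle-\mathbf{v}_i^2\ge2+\chi_i-\chi$.
\item Rank-$0$ factors are shown separately to have no sections, in the lemma following Proposition \ref{wallcrossinginduct}.
\end{itemize}
You explicitly defer this estimate as the ``main obstacle,'' so the wall-crossing step remains a plan. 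Example \ref{BNexample} shows that crudely bounding all lost objects cannot suffice. There is also a sign slip: the inequality you need is $\langle\mathbf{a},\mathbf{b}\rangle\ge1-\chi_b$, not $\chi_b+1$.

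\textbf{The $t\ll1$ step.} This has an off-by-one error that bites exactly in the generic case. Sheaves not locally free at $p$ come from length-$\ell$ quotients of $G^{**}$ whose support \emph{contains} $p$, not quotients supported only at $p$. That locus has dimension $\ell(s+1)-2$, by an incidence count from Ellingsrud--Lehn, not $\ell(s+1)-1$. With your count the codimension is $\ell(s-1)+1$, which equals $s$ rather than $s+1$ at $\ell=1$. With the correct count you get $\ell(s-1)+2\ge s+1$.

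Two further points are needed here:
\begin{itemize}
\item Since $G^{**}$ is only $\mu$-semistable, you need the dimension of the stack of $\mu$-semistable sheaves (Kurihara--Yoshioka).
\item The case $\mathbf{v}(G^{**})^2=0$ must be treated separately, as in Theorem \ref{singular}. There the relevant stack has dimension $k$, the divisibility of $\mathbf{v}(G^{**})$, not $\mathbf{v}(G^{**})^2+1$. One bounds $k$ by showing $G^{**}$ is Gieseker semistable and using $k\mid\chi(G^{**})$.
\end{itemize}
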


\begin{strategy*}
We first describe the strategy and main difficulty of the proof. The basic idea is as follows.
\begin{enumerate}
\item Start from some $\mathcal{M}_{\sigma}(\mathbf{v})$, where $\sigma$ is not on a wall. For $\sigma$ large, this is isomorphic to the Gieseker moduli space $M_{X}(\mathbf{v})$. 
\item Travel along the path $(0,tH)$ in the stability upper half plane until we reach $t\ll1$. Along the way, show that, at each wall, we do not lose too many stable objects in the Brill-Noether locus. Specifically, the codimension of 
\[
\{\text{destabilized objects}\}\cap \{\text{Brill-Noether locus}\}
\]
 should be at least $1-\chi$. 
\item Once we're at $M_{(0,tH)}(\mathbf{v})$ for $t\ll 1$, the Fourier-Mukai transform $\Phi[1]$ gives an isomorphism 
 \[
 M_{(0,tH)}(r,mH,\chi)\cong M_{(0,\frac{1}{dt}H)}(-\chi,m\hat{H},-r),
 \]
 where $\hat{H}$ is the ample generator on the dual abelian variety $\hat{X}$. Since $t\ll 1$, the space on the right is isomorphic to the Gieseker moduli space $M_{\hat{X},\hat{H}}(-\chi,m\hat{H},-r)$.
 
 \item Using that the Fourier-Mukai transform is an equivalence of categories (in particular, it is a fully faithful functor), the above isomorphism identifies the Brill-Noether locus in $M_{(0,tH)}(r,mH,\chi)$ with the locus of sheaves in $M_{\hat{X},\hat{H}}(-\chi,m\hat{H},-r)$ that are singular at the point $[\Oc_X]\in \hat{X}$. We then directly prove that this locus has codimension $1-\chi$. 
 \item By step 2 and local finiteness of walls, we lost at most codimension $1-\chi$ worth of stable objects traveling from $M_{\sigma}(\mathbf{v})$ to $M_{(0,tH)}(\mathbf{v})$, so this proves the theorem. 
\end{enumerate}

Most steps of this argument follow readily from the foundational theory of Bridgeland stability conditions and Fourier-Mukai transforms on abelian surfaces. The key computation is showing that, at each wall, we do not lose too many stable objects. This can be done by looking at the Harder-Narasimhan filtrations of the destabilized objects. \par
The obvious thing to do would be to crudely bound the locus of \emph{all} destabilized objects and hope that the bound we get suffices. The difficulty is that, as it turns out, the locus of all destabilized objects sometimes has codimension $<1-\chi$ (see Example \ref{BNexample}). In these cases, we really need to bound the locus of stable objects \emph{in the Brill-Noether loci} that become unstable. 
\end{strategy*}

Before starting the proof, we first establish our setup. For general preliminaries on Bridgeland stability conditions on abelian surfaces, see Section \ref{prelim}.

\begin{setup*}
We will be concerned with the stability conditions 
\[
\sigma_t=\sigma_{(0,t)}.
\] 
For these stability conditions, the heart $\mathrm{Coh}^\beta(X)=\mathrm{Coh}^0(X)$ is constant, while the central charge is given explicitly by 
\begin{equation}
\label{centralcharge} Z_{\sigma_t}(r,m,\chi)=(-\chi+rdt^2)+(2dmt)i
\end{equation}
For a stability condition $\sigma$, $\mathcal{M}_\sigma(\mathbf{v})$ will denote the moduli stack of $\sigma$-semistable objects of $\mathrm{Coh}^0(X)$.
\end{setup*}

\subsection{Wall crossing and reduction to numerics}
In this subsection, we consider the problem of bounding the locus of stable objects that become unstable after crossing a wall. In particular, we reduce the problem to numerics. 
\par

The key input is the following theorem of Nuer and Yoshioka.

\begin{theorem}[\cite{nueryoshioka}, Theorem 4.2] \label{nueryoshioka}
Let $X$ be an abelian surface and let $\sigma$ be a stability condition. Let $\mathbf{v}$ be a Mukai vector, and $\mathbf{v}_1,...,\mathbf{v}_s$ Mukai vectors with the same phase $\phi$ with respect to $\sigma$, and such that $\sum \mathbf{v}_i=\mathbf{v}$. Let $\sigma_-$ be a stability condition sufficiently close to $\sigma$. Consider the stack 
\[
\mathcal{F}(\mathbf{v}_1,...,\mathbf{v}_s)^*
\]
consisting of Harder-Narasimhan filtrations of objects of $\mathcal{M}_\sigma(\mathbf{v})$, where the $i^{\text{th}}$ semistable factor has Mukai vector $\mathbf{v}_i$. Then $\mathcal{F}(\mathbf{v}_1,...,\mathbf{v}_s)^*$ is an Artin stack with 
\[
\dim \mathcal{F}(\mathbf{v}_1,...,\mathbf{v}_s)^*=\sum \dim \mathcal{M}_{\sigma_-}(\mathbf{v}_i)+\sum_{i<j}\langle \mathbf{v}_i,\mathbf{v}_j\rangle
\]
\end{theorem}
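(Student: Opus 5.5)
The plan is to induct on the number $s$ of Harder--Narasimhan factors, presenting $\mathcal{F}(\mathbf{v}_1,\dots,\mathbf{v}_s)^*$ as an iterated ``stack of extensions'' fibered over products of $\sigma_-$-moduli stacks.

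First I fix the meaning of the objects. Since $\sigma_-$ is sufficiently close to $\sigma$ and the $\mathbf{v}_i$ all have phase $\phi$ at $\sigma$, I order them so that $\phi_{\sigma_-}(\mathbf{v}_1)>\dots>\phi_{\sigma_-}(\mathbf{v}_s)$. A point of $\mathcal{F}(\mathbf{v}_1,\dots,\mathbf{v}_s)^*$ is then a filtration
\[
0=E_0\subset E_1\subset\cdots\subset E_s=E
\]
in the heart, with $A_i=E_i/E_{i-1}$ a $\sigma_-$-semistable object of Mukai vector $\mathbf{v}_i$. Any such iterated extension is automatically $\sigma$-semistable, since all factors have phase $\phi$ at $\sigma$ and $\sigma$-semistability of phase $\phi$ is closed under extensions. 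Conversely, uniqueness of HN filtrations means the forgetful map $\mathcal{F}^*\to\mathcal{M}_\sigma(\mathbf{v})$ is a monomorphism. Algebraicity I take from Toda's result that the $\mathcal{M}_{\sigma_-}(\mathbf{v}_i)$ are algebraic, together with the standard fact that stacks of extensions between families of objects are algebraic.

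Next I treat $s=2$. There is a morphism $\mathcal{F}(\mathbf{v}_1,\mathbf{v}_2)^*\to\mathcal{M}_{\sigma_-}(\mathbf{v}_1)\times\mathcal{M}_{\sigma_-}(\mathbf{v}_2)$ sending a filtration to $(A_1,A_2)$. Its fiber over $(A_1,A_2)$ is the quotient stack $[\Ext^1(A_2,A_1)/\Hom(A_2,A_1)]$, with $\Hom$ acting trivially; this is the usual description of extensions, where the automorphisms come from $\Hom(A_2,A_1)$. The fiber therefore has dimension $\ext^1(A_2,A_1)-\hom(A_2,A_1)$. Since $K_X$ is trivial, Serre duality gives
\[
\Ext^2(A_2,A_1)\cong\Hom(A_1,A_2)^\vee .
\]
This vanishes because $A_1,A_2$ are $\sigma_-$-semistable with $\phi_{\sigma_-}(A_1)>\phi_{\sigma_-}(A_2)$. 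Hence
\[
\ext^1-\hom=-\chi(A_2,A_1)=\langle\mathbf{v}_2,\mathbf{v}_1\rangle,
\]
which depends only on the Mukai vectors. Although $\hom$ and $\ext^1$ individually jump, their difference is constant. I would then stratify the base by the value of $\hom(A_2,A_1)$. On each stratum the fiber is a vector-bundle stack of constant relative dimension $\langle\mathbf{v}_1,\mathbf{v}_2\rangle$, which gives $\dim\mathcal{F}^*=\dim\mathcal{M}_{\sigma_-}(\mathbf{v}_1)+\dim\mathcal{M}_{\sigma_-}(\mathbf{v}_2)+\langle\mathbf{v}_1,\mathbf{v}_2\rangle$.

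For general $s$, I map $\mathcal{F}(\mathbf{v}_1,\dots,\mathbf{v}_s)^*$ to $\mathcal{F}(\mathbf{v}_1,\dots,\mathbf{v}_{s-1})^*\times\mathcal{M}_{\sigma_-}(\mathbf{v}_s)$ by $(E_\bullet)\mapsto(E_{s-1},A_s)$. The fiber is again $[\Ext^1(A_s,E_{s-1})/\Hom(A_s,E_{s-1})]$. The vanishing $\Ext^2(A_s,E_{s-1})=\Hom(E_{s-1},A_s)^\vee=0$ holds because every HN factor of $E_{s-1}$ has $\sigma_-$-phase strictly larger than that of $A_s$. Additivity of $\chi$ then gives relative dimension $\sum_{i<s}\langle\mathbf{v}_i,\mathbf{v}_s\rangle$, and induction yields the stated formula.

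The main obstacle is making the fiberwise count rigorous in families. This requires building the relative $\Ext$ and $\Hom$ stacks over the (non-separated, possibly singular) base, and checking that the stack of extensions really has dimension $\ext^1-\hom$ everywhere even where $\hom$ jumps. I would handle it by working with a two-term complex computing $\mathbf{R}\Hom(A_s,E_{s-1})$ locally on the base, using the $\Ext^2$-vanishing so the complex is perfect in amplitude $[0,1]$. The extension stack is then the associated vector-bundle stack, of dimension equal to minus the Euler characteristic of that complex.
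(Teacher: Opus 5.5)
Your proposal is correct and follows essentially the same route as the paper's explanatory remark (and the Nuer--Yoshioka/Bridgeland argument it cites). Both build the HN filtration one extension at a time, identify each fiber with the quotient stack $[\Ext^1(A_i,E_{i-1})/\Hom(A_i,E_{i-1})]$, and use Serre duality on the abelian surface to kill $\Ext^2$, so that the relative dimension is $-\chi(A_i,E_{i-1})=\sum_{j<i}\langle\mathbf{v}_j,\mathbf{v}_i\rangle$. Your extra care about surjectivity and about making the fiberwise count rigorous in families via a two-term perfect complex is the standard way this is done in the cited source.
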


\begin{remark}
The theorem is actually proven in much larger generality, following ideas of \cite{bridgelandmotivic}. The base scheme $X$ can be any smooth projective variety satisfying openness of stability and boundedness of Bridgeland semistable objects as in \cite{toda}, Lemma 3.4. In particular, we just need that the moduli stack of Bridgeland semistable objects be an Artin stack of finite type over $\CC$. This fact for abelian surfaces in proven in section 4 of the same paper (\cite{toda}).\par
For the general dimension statement, one needs to assume that, for any stability condition $\sigma$ and $E,E'$ in the slice $\mathcal{A}_\sigma$ such that $\phi_{\text{min}}(E')>\phi_{\text{max}}(E)$, we have $\Hom(E,E'[k])=0$ for $k\geq 2$. On abelian surfaces, this follows from Serre duality. 
\end{remark}

\begin{remark}
The dimension statement is easy to understand. The idea is to build the HN filtration and count dimensions at each step. So suppose we want to build filtrations
\[
0=\mathcal{E}_0\subset \mathcal{E}_1\subset ...\subset \mathcal{E}_n
\]
where $\mathcal{F}_i=\mathcal{E}_i/\mathcal{E}_{i-1}$ is $\sigma_-$-semistable. At the $i^{\text{th}}$ step we need to construct extensions
\[
0\to \mathcal{E}_{i-1}\to \mathcal{E}_i\to \mathcal{F}_i\to 0
\]
The dimension of the space of $\mathcal{E}_{i-1}$'s has already been counted by the previous steps. The dimension of the space of $\mathcal{F}_i$'s is $\dim \mathcal{M}_{\sigma_-}(\mathbf{v}_i)$. Finally, for a fixed $\mathcal{E}_{i-1}$ and $\mathcal{F}_i$, the space of extensions should be thought of as the quotient stack 
\[
[\Ext^1(\mathcal{F}_i,\mathcal{E}_{i-1})/\Hom(\mathcal{F}_i,\mathcal{E}_{i-1})]
\]
This was originally proven in setting of coherent sheaves by Bridgeland in \cite{bridgelandmotivic} (see also proposition 1.7 of \cite{desaleramanan}). Since this is a quotient stack it has dimension 
\[
\ext^1(\mathcal{F}_i,\mathcal{E}_{i-1})-\hom(\mathcal{F}_i,\mathcal{E}_{i-1}).
\]
With the vanishing condition in the previous remark, this is exactly $-\chi( \mathcal{F}_i,\mathcal{E}_{i-1})$, which is computed as 
\[
\langle \mathbf{v}_i,\sum_{j<i} \mathbf{v}_j\rangle=\sum_{j<i}\langle \mathbf{v}_i,\mathbf{v}_j\rangle.
\]
\end{remark}

We also need the following theorem on dimension. 

\begin{theorem}[\cite{myy2}, Theorem 1.4 and \cite{myy1}, Lemma 5.3.2]\label{modulidim}
Let $(X,H)$ be a polarized abelian surface of Picard rank 1 with $H$ the ample generator. Let $\mathbf{v}$ be a Mukai vector with $\mathbf{v}=\ell\mathbf{v}_0$ for a primitive Mukai vector $\mathbf{v}_0$, and let $\sigma$ be a stability condition not on a wall. Then 
\[
\dim \mathcal{M}_{\sigma}(\mathbf{v})=\begin{cases}
\mathbf{v}^2+1 & \text{ if }\mathbf{v}^2>0\\
\mathbf{v}^2+\ell &\text{ if }\mathbf{v}^2=0
\end{cases}
\]
\end{theorem}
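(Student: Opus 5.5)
The approach is to reduce to deformation theory of stable objects and then control the strictly semistable locus by a stratification by Jordan--H\"older type. First I would handle the stable part. For a $\sigma$-stable object $E$ with $\mathbf{v}(E)=\mathbf{v}$, we have $\Hom(E,E)=\CC$, and $\Ext^2(E,E)=\Hom(E,E)^\vee=\CC$ by Serre duality, since $\omega_X\cong\Oc_X$. Hence
\[
\ext^1(E,E)=\langle\mathbf{v},\mathbf{v}\rangle+2=\mathbf{v}^2+2.
\]
On abelian surfaces the obstructions are controlled by the trace (the trace-free part of $\Ext^2$ vanishes), so the stable locus is smooth. As an open substack of $\mathcal{M}_\sigma(\mathbf{v})$ it has dimension $\ext^1-\hom=\mathbf{v}^2+1$, because each point has a $\mathbb{G}_m$ stabilizer. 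To make this rigorous, and to know nonemptiness, I would use the result of Minamide--Yanagida--Yoshioka. It says that for $\sigma$ off a wall, $\mathcal{M}_\sigma(\mathbf{v})$ is identified, via a Fourier--Mukai transform to a Fourier--Mukai partner and a twist, with a moduli stack of (twisted) Gieseker semistable sheaves. Nonemptiness of the stable locus then follows from the usual existence results (compare Theorem \ref{sheavesexistence}).

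Next I would bound the strictly semistable locus when $\mathbf{v}^2>0$. Since $\sigma$ is not on a wall for $\mathbf{v}$, every Jordan--H\"older factor of a semistable object of class $\mathbf{v}$ has Mukai vector proportional to $\mathbf{v}$, so of the form $a_i\mathbf{v}_0$ with $\sum a_i=\ell$. I would stratify the semistable locus by the ordered tuple $(a_1,\dots,a_s)$, $s\ge 2$, of classes appearing in a filtration with stable factors. The dimension count of the remark following Theorem \ref{nueryoshioka} applies here: the extension stacks $[\Ext^1/\Hom]$ contribute $-\chi(\mathcal{F}_i,\mathcal{E}_{i-1})$. The $\Ext^2$ vanishing used there fails for same-phase objects, but it only fails when the factors are isomorphic, and then it costs at most a bounded correction that is absorbed below. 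This bounds each stratum by
\[
\sum_i\big(a_i^2\mathbf{v}_0^2+1\big)+\sum_{i<j}a_ia_j\mathbf{v}_0^2+O(s),
\]
using induction on $\ell$ for the factor stacks. Meanwhile $\mathbf{v}^2+1=\sum_ia_i^2\mathbf{v}_0^2+2\sum_{i<j}a_ia_j\mathbf{v}_0^2+1$. The deficit is at least $\sum_{i<j}a_ia_j\mathbf{v}_0^2-s+1$. For abelian surfaces $\mathbf{v}_0^2\ge 2$ is even, so this deficit is positive whenever $s\ge2$. Hence the stable locus is dense and $\dim\mathcal{M}_\sigma(\mathbf{v})=\mathbf{v}^2+1$.

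For $\mathbf{v}^2=0$, the stable objects of class $\mathbf{v}_0$ form a $2$-dimensional moduli space (an abelian surface, the Fourier--Mukai partner), giving a stack of dimension $1$. Stable objects of class $a\mathbf{v}_0$ with $a\ge2$ do not exist, since their self-pairing would be $0$ forcing $\ext^1=2$ and they would be of class $\mathbf{v}_0$. So every semistable object of class $\ell\mathbf{v}_0$ is an iterated extension of $\ell$ stable objects of class $\mathbf{v}_0$. The stratum with pairwise non-isomorphic factors is a direct sum of $\ell$ stable objects: $2\ell$ moduli parameters minus an $\ell$-dimensional automorphism torus, giving dimension $\ell$. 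Strata with repeated factors involve self-extensions, which are also balanced by extra automorphisms. The main obstacle I expect is exactly this bookkeeping: showing that coincident factors, where $\Hom$ and $\Ext^2$ between factors are nonzero, never produce a component of dimension larger than $\ell$ (respectively $\mathbf{v}^2+1$). I would handle this via the Fourier--Mukai identification with sheaves supported at points (the local model being the stack of length-$\ell$ sheaves on a surface, of dimension $2\ell-\ell=\ell$).
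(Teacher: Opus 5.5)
Your route differs from the paper's. The paper computes nothing here: it uses \cite{myy2}, Theorem 1.4, to identify $\mathcal{M}_\sigma(\mathbf{v})$ with a Gieseker moduli stack after a Fourier--Mukai transform, and then reads off the dimension from \cite{myy1}, Lemma 5.3.2. You count directly on the Bridgeland side: a smooth stable locus, plus a Jordan--H\"older stratification bounded as in the remark after Theorem \ref{nueryoshioka}. In effect this reproves the cited lemma. You still need \cite{myy2} for nonemptiness and a Fourier--Mukai identification for $\mathbf{v}^2=0$, so the direct count buys transparency rather than fewer inputs. Two steps do not go through as written.

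The first is in the isotropic case. Your reason that there are no stable objects of class $a\mathbf{v}_0$ with $a\ge 2$ is a non sequitur. The equality $\ext^1(E,E)=2$ only says the moduli space is $2$-dimensional at $E$; it says nothing about primitivity of $\mathbf{v}(E)$. The missing idea is orthogonality to the family of class $\mathbf{v}_0$:
\begin{itemize}
\item For every stable $F$ of class $\mathbf{v}_0$, $\Hom(F,E)=\Hom(E,F)=0$, since they are non-isomorphic stable objects of the same phase.
\item By Serre duality, $\Ext^2(F,E)\cong\Hom(E,F)^\vee=0$.
\item Then $\chi(F,E)=-a\mathbf{v}_0^2=0$ forces $\Ext^1(F,E)=0$.
\end{itemize}
Now let $\Phi$ be the (twisted) Fourier--Mukai equivalence whose kernel is the universal family on $X\times M_\sigma(\mathbf{v}_0)$; it sends $F_y$ to a shift of $k_y$. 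Then $R\Hom(k_y,\Phi(E))=0$ for every $y$, so $\Phi(E)=0$ and hence $E=0$. This is the same equivalence you want for the repeated-factor strata. There it identifies $\mathcal{M}_\sigma(\ell\mathbf{v}_0)$ with the stack of length-$\ell$ zero-dimensional sheaves, which has dimension $\ell$ by \cite{ellingsrudlehn}. So set it up first and use it for both. At that point your isotropic case is essentially the Fourier--Mukai reduction underlying the results the paper cites.

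The second is in the case $\mathbf{v}^2>0$. The $\Ext^2$ correction is neither bounded nor $O(s)$, and your displayed deficit simply omits it. The $i$-th extension step has dimension $-\chi(F_i,E_{i-1})+\hom(E_{i-1},F_i)$. Filtering $E_{i-1}$ only gives $\hom(E_{i-1},F_i)\le\#\{j<i : F_j\cong F_i\}$, so the total correction can reach $\binom{s}{2}$ when all factors are isomorphic.

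The correction is absorbed, but you have to show it. From $\mathbf{v}_0^2\ge 2$ you get $\sum_{i<j}a_ia_j\mathbf{v}_0^2\ge 2\binom{s}{2}$. Hence each stratum has dimension at most $\mathbf{v}^2+1-\bigl(\binom{s}{2}-s+1\bigr)=\mathbf{v}^2+1-\binom{s-1}{2}$, which suffices for the dimension formula.

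Your further claim that the stable locus is dense needs strict positivity, and this crude count gives none at $s=2$. For that you must also use that coincident factors shrink the stack of factor tuples. With $p$ distinct factors of classes $b_k\mathbf{v}_0$, that stack has dimension $\sum_k(b_k^2\mathbf{v}_0^2+2)-s$, not $\sum_i(a_i^2\mathbf{v}_0^2+1)$.
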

\begin{remark}
Theorem 1.4 in \cite{myy2} exhibits an isomorphism between $\mathcal{M}_{\sigma}(\mathbf{v})$ and some Gieseker moduli space, and Lemma 5.3.2 in \cite{myy1} (among more general things) computes the dimension of the Gieseker moduli space. 
\end{remark}

\begin{notation*}
For the remainder of this section, fix a parameter $t$ so that $\sigma=\sigma_t$ is a stability condition on a wall along our path, and let $\sigma_+,\sigma_-$ be stability conditions on either side of the wall sufficiently close to $\sigma$.
 \end{notation*}
 
 The main result of this subsection is the following, which is essentially Proposition 4.3 of \cite{nueryoshioka}, adapted for the numerics of abelian surfaces. 
  
\begin{proposition}\label{wallcrossing}
Let $(X,H)$ be a polarized abelian surface of Picard rank 1, with $H$ the ample generator. Let $\mathbf{v}=(r,m,\chi)$ be a Mukai vector such that $\mathbf{v}^2>0$, $r,m\geq 1$ and $\chi\leq 0$. Let $\mathbf{v}_1,...,\mathbf{v}_s$ be the $\sigma_-$ Harder-Narasimhan type of an object $E\in \mathcal{M}_{\sigma_+}(\mathbf{v})$ that is $\sigma_-$-unstable. Let 
\[
\delta_i=\begin{cases}
1 & \text{ if }\mathbf{v}_i^2>0\\
\ell_i & \text{ if $\mathbf{v}_i^2=0$ and $\mathbf{v}_i=\ell_i\mathbf{w}_i$ for $\mathbf{w}_i$ primitive}
\end{cases}
\] 
Then the locus of all $\sigma_+$-semistable objects lost at $\sigma$ has codimension at least 
\[
\min_{\mathbf{v}_1,...,\mathbf{v}_s}\left\{ \sum_{i<j}\langle \mathbf{v}_i,\mathbf{v}_j\rangle - \sum_{i=1}^s \delta_i+1\right\},
\]
where the minimum is taken over all $\mathbf{v}_1,...,\mathbf{v}_s$ appearing as a Harder-Narasimhan type of a destabilized object.
\end{proposition}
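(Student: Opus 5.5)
The plan is to stratify the locus of $\sigma_+$-semistable objects that become $\sigma_-$-unstable by their $\sigma_-$ Harder–Narasimhan type, bound the dimension of each stratum by Theorem \ref{nueryoshioka}, and compare with $\dim\mathcal{M}_{\sigma_+}(\mathbf{v})$ from Theorem \ref{modulidim}. Every $E\in\mathcal{M}_{\sigma_+}(\mathbf{v})$ that is $\sigma_-$-unstable has a unique $\sigma_-$ HN filtration with factors of Mukai vectors $\mathbf{v}_1,\dots,\mathbf{v}_s$, $s\geq 2$, $\sum\mathbf{v}_i=\mathbf{v}$. Since $\sigma_\pm$ are sufficiently close to $\sigma$, these factors all have the same $\sigma$-phase as $\mathbf{v}$. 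By local finiteness of walls and boundedness of $\sigma$-semistable objects of class $\mathbf{v}$, only finitely many such HN types occur. The destabilized locus is therefore a finite union of the images of the stacks $\mathcal{F}(\mathbf{v}_1,\dots,\mathbf{v}_s)^*$ under the forgetful map to $\mathcal{M}_\sigma(\mathbf{v})$. By uniqueness of HN filtrations, this map is injective on points, so it does not increase dimension. It therefore suffices to bound each stratum.

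For a fixed type, Theorem \ref{nueryoshioka} gives
\[
\dim \mathcal{F}(\mathbf{v}_1,\dots,\mathbf{v}_s)^*=\sum_i \dim\mathcal{M}_{\sigma_-}(\mathbf{v}_i)+\sum_{i<j}\langle\mathbf{v}_i,\mathbf{v}_j\rangle .
\]
Each $\mathbf{v}_i$ is the class of a $\sigma_-$-semistable object, so $\mathbf{v}_i^2\geq 0$ on an abelian surface. Since $\sigma_-$ is not on a wall for $\mathbf{v}$, it can be chosen off the (locally finite) walls for each of the finitely many $\mathbf{v}_i$. Theorem \ref{modulidim} then gives $\dim\mathcal{M}_{\sigma_-}(\mathbf{v}_i)=\mathbf{v}_i^2+\delta_i$ with $\delta_i$ as in the statement. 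This covers both cases, since $\mathbf{v}_i^2+1$ when $\mathbf{v}_i^2>0$ and $0+\ell_i$ when $\mathbf{v}_i^2=0$. Also $\dim\mathcal{M}_{\sigma_+}(\mathbf{v})=\mathbf{v}^2+1$ because $\mathbf{v}^2>0$.

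Expanding $\mathbf{v}^2=\sum_i\mathbf{v}_i^2+2\sum_{i<j}\langle\mathbf{v}_i,\mathbf{v}_j\rangle$, the codimension of the stratum is
\[
\mathbf{v}^2+1-\Big(\sum_i\mathbf{v}_i^2+\sum_i\delta_i+\sum_{i<j}\langle\mathbf{v}_i,\mathbf{v}_j\rangle\Big)=\sum_{i<j}\langle\mathbf{v}_i,\mathbf{v}_j\rangle-\sum_i\delta_i+1 .
\]
Taking the minimum over the finitely many types gives the claim.

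The main obstacle is justifying the dimension input rather than the numerics. Three points need care. First, $\sigma_-$ must avoid walls for every factor class $\mathbf{v}_i$, so that Theorem \ref{modulidim} applies to each $\mathcal{M}_{\sigma_-}(\mathbf{v}_i)$; this requires the finiteness of types together with local finiteness of walls. Second, the map from the HN-filtration stack to $\mathcal{M}_{\sigma_+}(\mathbf{v})$ must not raise dimension. Injectivity on points plus finite type suffices here; automorphisms could only make the image smaller, so they do not harm the lower bound on codimension. Third, the vanishing $\Hom(E,E'[k])=0$ for $k\geq2$ used in Theorem \ref{nueryoshioka} must hold, and it follows from Serre duality with trivial canonical bundle, as noted in the remark after that theorem.
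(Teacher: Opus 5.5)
Your proposal is correct and follows essentially the same route as the paper. You stratify by $\sigma_-$ Harder--Narasimhan type, bound each stratum by Theorem \ref{nueryoshioka} (using uniqueness of HN filtrations so that passing to the image does not raise dimension), insert $\dim\mathcal{M}_{\sigma_-}(\mathbf{v}_i)=\mathbf{v}_i^2+\delta_i$ and $\dim\mathcal{M}_{\sigma_+}(\mathbf{v})=\mathbf{v}^2+1$ from Theorem \ref{modulidim}, and expand $\mathbf{v}^2=\sum_i\mathbf{v}_i^2+2\sum_{i<j}\langle\mathbf{v}_i,\mathbf{v}_j\rangle$; your remark that $\sigma_-$ must also avoid the walls of each factor class $\mathbf{v}_i$ makes explicit a point the paper uses only implicitly.
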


\begin{remark}
 For $\sigma_+$-semistable objects $E$ that become unstable with respect to $\sigma_-$, only finitely many $\sigma_-$ HN types can occur (see \cite{bridgelandk3}). This justifies taking the max. 
\end{remark}

\begin{proof}
 Since there are only finitely many possible HN types, we can fix an HN type that occurs and bound the locus of objects $E\in \mathcal{M}_{\sigma_+}(\mathbf{v})$ with this particular $\sigma_-$ HN type. So fix a sequence 
\[
\mathbf{v}_1,...,\mathbf{v}_s
\]
 of Mukai vectors, with $\sum \mathbf{v}_i=\mathbf{v}$, that occurs as the $\sigma_-$ HN type of an object $E\in M_{\sigma_+}(\mathbf{v})$. Let 
 \[
 \mathcal{Z}=\{E\in \mathcal{M}_{\sigma_+}(\mathbf{v})\;|\; E \text{ has $\sigma_-$ HN type $(\mathbf{v}_1,...,\mathbf{v}_s)$}\}
 \]
 Our goal is then to bound $\dim \mathcal{Z}$.
 \par  
As in the statement of Theorem \ref{nueryoshioka}, consider the stack 
\[
\mathcal{F}(\mathbf{v}_1,...,\mathbf{v}_s)^*
\]
consisting of $\sigma_-$ HN filtrations of objects of $\mathcal{M}_\sigma(\mathbf{v})$, where the $i^{\text{th}}$ semistable factor has Mukai vector $\mathbf{v}_i$. Restricting to the locus $\mathcal{F}(\mathbf{v}_1,...,\mathbf{v}_s)^\circ\subset\mathcal{F}(\mathbf{v}_1,...,\mathbf{v}_s)^*$ whose $E_s$ term is $\sigma_+$-semistable, the image of the map 
\begin{align*}
\mathcal{F}(\mathbf{v}_1,...,\mathbf{v}_s)^\circ&\to \mathcal{M}_\sigma(\mathbf{v})\\
(0\subset \mathcal{E}_1\subset...\subset \mathcal{E}_s)&\mapsto \mathcal{E}_s
\end{align*}
is exactly $\mathcal{Z}$. By uniqueness of the HN filtration, 
\[
\dim\mathcal{Z}\leq \dim \mathcal{F}(\mathbf{v}_1,...,\mathbf{v}_s)^*.
\]
Now note that, if $\mathcal{F}_i$ is a semistable factor of the $\sigma_-$ HN filtration of $E$, then, since $\sigma_-$ is sufficiently close to $\sigma$, 
\[
\phi_\sigma(\mathbf{v}(\mathcal{F}_i))=\phi_\sigma(\mathbf{v})
\]
(see \cite{bayermacri}, Proposition 5.1). Thus, we can apply Theorem \ref{nueryoshioka} to get
\[
\dim\mathcal{Z}\leq \sum_{i=1}^s \dim \mathcal{M}_{\sigma_-}(\mathbf{v}_i)+\sum_{i<j}\langle \mathbf{v}_i,\mathbf{v}_j\rangle.
\]
By Theorem \ref{modulidim}, $\dim \mathcal{M}_{\sigma_-}(\mathbf{v}_i)=\mathbf{v}_i^2+\delta_i$. Since $\mathbf{v}^2>0$ by assumption, $\dim \mathcal{M}_{\sigma_+}(\mathbf{v})=\mathbf{v}^2+1$. Then using $\sum_{i=1}^s \mathbf{v}_i=\mathbf{v}$, we can directly compute that
\[
\operatorname{codim}\mathcal{Z}\geq \sum_{i<j}\langle \mathbf{v}_i,\mathbf{v}_j\rangle - \sum_{i=1}^s \delta_i+1.
\]

\end{proof}

\subsection{Bounding all lost object}
By Proposition \ref{wallcrossing}, we have reduced the problem of bounding the locus of all lost semistable objects to bounding
\[
\sum_{i<j}\langle \mathbf{v}_i,\mathbf{v}_j\rangle - \sum_{i=1}^s \delta_i+1
\]
for a $\sigma_-$ HN type of a $\sigma_+$-semistable object. In this subsection, we show that, in certain cases, this term is at least $1-\chi$. Thus, for these cases, we don't need a more refined bound where we also consider objects in the Brill-Noether locus.

We start some simple but key observations. 

\begin{lemma}\label{phaseequation}
Let $\mathbf{v}$ and $\mathbf{v}_i$ be as in Theorem \ref{wallcrossing}. Write $\mathbf{v}_i=(r_i,m_iH,\chi_i)$. Suppose the ample $H$ has $H^2=2d$. Then 
\[
m_i\geq 1
\]
and
\[
dt^2(mr_i-rm_i)=m\chi_i-\chi m_i,
\]
where $t$ comes from the stability condition on the wall $\sigma=\sigma_t$. 
\end{lemma}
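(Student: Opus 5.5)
The two claims come from two different facts about the wall. The equation comes from the $\mathbf{v}_i$ having the same phase as $\mathbf{v}$ at $\sigma=\sigma_t$. The inequality $m_i\geq 1$ comes from the $\sigma_-$-semistable factors lying in the heart $\mathrm{Coh}^0(X)$, so that $\mathrm{Im}Z_{\sigma_t}\geq 0$.

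\emph{Phase equation.} By the proof of Proposition \ref{wallcrossing} (via \cite{bayermacri}, Proposition 5.1), each $\mathbf{v}_i$ satisfies $\phi_\sigma(\mathbf{v}_i)=\phi_\sigma(\mathbf{v})$. By \eqref{centralcharge},
\[
Z_{\sigma_t}(\mathbf{v}_i)=(-\chi_i+r_idt^2)+2dm_it\,i.
\]
Equality of phases for two vectors in the closed upper half plane means that $Z(\mathbf{v}_i)$ and $Z(\mathbf{v})$ are positively proportional. This gives
\[
\mathrm{Re}Z(\mathbf{v}_i)\,\mathrm{Im}Z(\mathbf{v})=\mathrm{Re}Z(\mathbf{v})\,\mathrm{Im}Z(\mathbf{v}_i).
\]
Substituting, $(-\chi_i+r_idt^2)\,2dmt=(-\chi+rdt^2)\,2dm_it$. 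Dividing by $2dt>0$ and rearranging yields $dt^2(mr_i-rm_i)=m\chi_i-\chi m_i$.

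\emph{Bound $m_i\geq 1$.} Each factor $\mathcal{F}_i$ lies in $\mathrm{Coh}^0(X)$, so $\mathrm{Im}Z(\mathbf{v}_i)=2dm_it\geq 0$, i.e.\ $m_i\geq 0$. The task is to exclude $m_i=0$.

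Suppose $m_i=0$. Then $Z(\mathbf{v}_i)$ is real, and since $Z$ maps the heart to the semi-closed upper half plane, $Z(\mathbf{v}_i)\in\RR_{<0}$, so $\mathcal{F}_i$ has phase $1$. But $\mathbf{v}$ has $m\geq 1$, so $\mathrm{Im}Z(\mathbf{v})>0$ and $\phi_\sigma(\mathbf{v})<1$. This contradicts the equality of phases. Hence $m_i\geq 1$.

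This argument also covers $Z(\mathbf{v}_i)=0$, which is impossible for a nonzero semistable object, as the factors are. In addition, $m_i\leq m-1$ when $s\geq 2$, since $\sum m_i=m$; this is not asked for. The only delicate point is justifying that the factors share the phase of $\mathbf{v}$ at $\sigma$, and that is already cited in the proof of Proposition \ref{wallcrossing}. Otherwise the lemma is a direct computation.
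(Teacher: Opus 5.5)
Your proof is correct and follows the paper's route: equality of phases at $\sigma$ (via Bayer--Macr\`i) gives positive proportionality of $Z_{\sigma_t}(\mathbf{v}_i)$ and $Z_{\sigma_t}(\mathbf{v})$, and the identity then follows from the explicit central charge. The only cosmetic difference is in how you get $m_i\geq 1$. The paper writes $Z(\mathbf{v})=\lambda Z(\mathbf{v}_i)$ with $\lambda>0$ and reads off $\lambda m_i=m>0$ directly, whereas you argue via heart positivity and then exclude phase $1$; this amounts to the same thing.
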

\begin{proof}
Since $\sigma_-$ is sufficiently close to $\sigma$, by Proposition 5.1 of \cite{bayermacri},
\[
\phi_\sigma(\mathbf{v})=\phi_\sigma(\mathbf{v}_i).
\]
The explicit formula for the central charge $Z_t$ (see Equation \ref{centralcharge}) then yields
\[
(-\chi+rdt^2)+(2dmt)i = \lambda\big(-\chi_i+r_idt^2 + (2dm_it)i\big)
\]
for some positive $\lambda\in \RR$ (recall, the central charge maps to the upper half plane). Comparing imaginary parts yields
\[
\lambda m_i = m.
\]
Since $m$ is positive, we get that $m_i\geq 1$ as well. To deduce the equation in the statement, equate the real parts using $\lambda=\frac{m}{m_i}$ and rearrange. 
\end{proof}
\begin{remark}
This lemma, along with the fact that $\mathbf{w}^2\geq 0$ for semistable Mukai vectors $\mathbf{w}$, will be the basis for essentially all of our computations. 
\end{remark}

\begin{lemma}\label{isotropicpair}
Suppose $\mathbf{v}_i=\langle r_i,m_iH,\chi_i\rangle$ is a primitive, isotropic Mukai vector such that $Z_{\sigma}(\mathbf{v}_i)$ is a positive real multiple of $Z_{\sigma}(\mathbf{v})$. Then 
\[
\langle \mathbf{v},\mathbf{u}\rangle \geq 2-\chi.
\]
\end{lemma}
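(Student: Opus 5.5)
The plan is to reduce the claim to two relations between $\mathbf{v}$ and $\mathbf{v}_i$ and then split by the sign of $r_i$. I read the $\mathbf{u}$ in the statement as $\mathbf{v}_i$. Throughout, $\mathbf{v}=(r,mH,\chi)$ with $r,m\geq 1$, $\chi\leq 0$ and $\mathbf{v}^2>0$, as in Proposition \ref{wallcrossing}.

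\textbf{The two relations.} First, by Lemma \ref{phaseequation}, $m_i\geq 1$ and
\[
dt^2(mr_i-rm_i)=m\chi_i-\chi m_i .
\]
Second, isotropy $\mathbf{v}_i^2=0$ reads $dm_i^2=r_i\chi_i$. Since $m_i\geq 1$, this forces $r_i\chi_i>0$: either both $r_i,\chi_i$ are positive, or both are negative (the latter for shifted sheaves in $\mathrm{Coh}^0(X)$). The quantity to bound is
\[
\langle \mathbf{v},\mathbf{v}_i\rangle = 2dmm_i - r\chi_i-\chi r_i .
\]

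\textbf{Case $r_i,\chi_i<0$.} Every term is nonnegative, and integrality gives the bound directly:
\[
\langle \mathbf{v},\mathbf{v}_i\rangle \geq 2dmm_i + r|\chi_i| + |\chi||r_i| \geq 2+1+|\chi|\geq 2-\chi .
\]

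\textbf{Case $r_i,\chi_i>0$.} This is the main obstacle, because the term $-r\chi_i$ is negative. I would use the auxiliary vector $\mathbf{w}=m_i\mathbf{v}-m\mathbf{v}_i$, which has zero $H$-component. By the phase equation,
\[
\mathbf{w}=(a,0,dt^2a), \qquad a=m_ir-mr_i .
\]
Here $a\neq 0$, because $\mathbf{v}$ is not proportional to an isotropic vector. Hence $\mathbf{w}^2=-2dt^2a^2$. Expanding $\mathbf{w}^2$ with $\mathbf{v}_i^2=0$ gives the identity
\[
\langle \mathbf{v},\mathbf{v}_i\rangle=\frac{m_i\mathbf{v}^2}{2m}+\frac{dt^2a^2}{mm_i}.
\]
In particular $\langle \mathbf{v},\mathbf{v}_i\rangle>0$.

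To reach $2-\chi$, I would combine this identity with the explicit formula. Write $\langle \mathbf{v},\mathbf{v}_i\rangle=|\chi|r_i+(2dmm_i-r\chi_i)$. Since $r_i\geq 1$, it suffices to show $2dmm_i-r\chi_i\geq 2$. I would substitute $\chi_i=dm_i^2/r_i$ and eliminate $t$ via the phase equation. I would then try to show that $2dmm_i-r\chi_i\leq 1$ forces $a$ to have the wrong sign relative to $m\chi_i-\chi m_i>0$, contradicting $t^2>0$. If that fails, the fallback is to use $\mathbf{v}^2\geq 2dm^2+2r|\chi|$ in the identity. In the relevant wall situation one also has $1\leq m_i\leq m-1$, so the symmetric argument applied to $\mathbf{v}-\mathbf{v}_i$ should supply the missing factor. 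This sign and size estimate is where I expect the real work to lie; the other case is immediate.
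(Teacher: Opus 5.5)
\textbf{Gap 1: the case $r_i,\chi_i<0$ rests on a sign error.} This is precisely the case where the phase hypothesis cannot be avoided. In $\langle\mathbf{v},\mathbf{v}_i\rangle=2dmm_i-r\chi_i-\chi r_i$, when $\chi\le 0$ and $r_i<0$ the product $\chi r_i$ is $\ge 0$. So the term $-\chi r_i=-|\chi||r_i|$ is nonpositive, not $+|\chi||r_i|$. You had this sign right in the other case, where $r_i>0$.

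No argument using only signs and integrality can work here. For $d=1$, $\mathbf{v}=(1,H,-10)$ and the primitive isotropic vector $\mathbf{v}_i=(-1,H,-1)$, one gets $\langle\mathbf{v},\mathbf{v}_i\rangle=2+1-10=-7<12=2-\chi$. This pair is excluded only because the phase equation would force $-2t^2=9$.

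The missing step, which is the paper's argument, goes as follows. Since $mr_i-rm_i<0$, Lemma \ref{phaseequation} gives $m\chi_i<\chi m_i$. Multiplying by $r_i<0$ and using $r_i\chi_i=dm_i^2$ gives $dmm_i>\chi r_i$, hence $dmm_i\ge \chi r_i+1$. Then $2dmm_i-\chi r_i\ge \chi r_i+2\ge 2-\chi$, because $\chi r_i=|\chi||r_i|\ge|\chi|$. The remaining term $-r\chi_i\ge 1$ only helps.

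\textbf{Gap 2: the case $r_i,\chi_i>0$ is never finished.} The identity from $\mathbf{w}=m_i\mathbf{v}-m\mathbf{v}_i$ is correct, but it is just a rewriting of $\langle\mathbf{v},\mathbf{v}_i\rangle$. Your fallback through $\mathbf{v}^2=2dm^2+2r|\chi|$ only yields $\langle\mathbf{v},\mathbf{v}_i\rangle>dmm_i+\frac{rm_i}{m}|\chi|$. That is too weak whenever $rm_i<m$ and $|\chi|$ is large.

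The idea you were circling is the right one and takes one line:
\begin{itemize}
\item The right-hand side $m\chi_i-\chi m_i$ of the phase equation is positive, so $a=m_ir-mr_i<0$, i.e.\ $rm_i<mr_i$.
\item Then $r\chi_i=dm_i\cdot\frac{rm_i}{r_i}<dmm_i$.
\item So the integer $2dmm_i-r\chi_i$ is strictly larger than $dmm_i\ge 1$, hence at least $2$.
\item Adding $-\chi r_i\ge-\chi$ (as $r_i\ge 1$) gives $\langle\mathbf{v},\mathbf{v}_i\rangle\ge 2-\chi$.
\end{itemize}
This is exactly how the paper handles this case. No auxiliary vector or comparison with $\mathbf{v}-\mathbf{v}_i$ is needed.
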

\begin{proof}
Since $\mathbf{v}_i$ is isotropic, we get
\[
\chi_ir_i=dm_i^2.
\]
Thus, $\chi_i$ and $r_i$ are nonzero and have the same sign. First suppose $\chi_i,r_i>0$. We make a couple observations.
\begin{enumerate}
\item First, by Lemma \ref{phaseequation}, 
\[
dt^2(mr_i-rm_i)=m\chi_i-\chi m_i.
\]
Recall, we assumed $\chi\leq 0$. Thus, $\chi_i,r_i>0$ implies that the right hand side is positive. Looking at the left hand side, we get the inequality 
\[
mr_i>rm_i.
\]
\item Since $\mathbf{v}_i^2$ is isotropic, we have
\[
\chi_i=\frac{dm_i^2}{r_i}.
\]
\end{enumerate}
Combining these, we can compute
\begin{align*}
\langle \mathbf{v},\mathbf{v}_i\rangle &= 2dmm_i-\chi r_i-\chi_ir\\
&=2dmm_i-\chi r_i-\frac{dm_i^2}{r_i}r \\
 &>2dmm_i-\chi r_i-\frac{dmm_i}{r_i}r_i \hspace{0.5cm}\\
&=dmm_i-\chi r_i\\
&\geq 1-\chi.
\end{align*}
Thus, $\langle \mathbf{v},\mathbf{v}_i\rangle \geq 2-\chi$.\par
If $\chi_i,r_i<0$, then $mr_i-rm_i<0$. Thus, the equation in Lemma \ref{phaseequation} yields 
\[
m\chi_i < \chi m_i .
\]
multiplying by $r_i$ (noting that $r_i<0$) and using $dm_i^2=\chi_i r_i$, we get
\[
\chi r_i+1\leq dmm_i.
\]
Applying this, we get 
\begin{align*}
\langle \mathbf{v},\mathbf{v}_i\rangle &= 2dmm_i-\chi r_i-\chi_ir\\
&\geq 2+\chi r_i-\chi_ir\\
&\geq 2-\chi,
\end{align*}
where, in the last step, we use that $\chi_i,r_i<0$.
\end{proof}
\begin{remark}
Since $\sum_{i=1}^s\delta_i$ gets large with isotropic factors, this lemma roughly says that $\sum_{i=1}^s\delta_i$ can't get large without $\sum_{i<j}\langle \mathbf{v}_i,\mathbf{v}_j\rangle$ getting suitably large as well. 
\end{remark}

We also need the following positivity lemma. The proof is computational, so we omit the proof and provide a reference for the interested reader.

\begin{lemma}[\cite{myy1}, Lemma 5.3.4]\label{myypositive}
For $\mathbf{v}_i, \mathbf{v}_j$ as above, if $i\neq j$, then
\[
\langle\mathbf{v}_i,\mathbf{v}_j\rangle \geq 1,\hspace{0.3cm}\text{and}\hspace{0.3cm}\langle\mathbf{v}_i,\mathbf{v}_j\rangle\geq 3 \text{ if $\mathbf{v}_i^2$ and $\mathbf{v}_j^2$ are positive.}
\]
In particular, if $\mathbf{v}_i=\delta_i\mathbf{w}_i$, then 
\[
\langle\mathbf{v}_i,\mathbf{v}_j\rangle\geq \delta_i.
\]
\end{lemma}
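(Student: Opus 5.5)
The plan is to reduce everything to linear algebra in the real span of the algebraic Mukai vectors $(r,mH,\chi)$, and then apply the reverse Cauchy--Schwarz inequality on a Lorentzian plane. On these vectors the Mukai pairing is the quadratic form
\[
\mathbf{w}^2=2dm^2-2r\chi .
\]
It has signature $(2,1)$ on the $3$-dimensional real space, and it is even.

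\textbf{Step 1: the same-phase vectors form a Lorentzian plane.} By Proposition 5.1 of \cite{bayermacri}, as used in Lemma \ref{phaseequation}, all the $Z_\sigma(\mathbf{v}_i)$ lie on the ray of $Z_\sigma(\mathbf{v})$. So the $\mathbf{v}_i$ lie in the plane
\[
P=\{\mathbf{w}\;:\; Z_\sigma(\mathbf{w})\in \RR\, Z_\sigma(\mathbf{v})\}.
\]
By the formula \eqref{centralcharge}, $P$ is spanned by $\mathbf{v}$ and the kernel vector $\mathbf{k}=(1,0,dt^2)$ of $Z_\sigma$. Since $\mathbf{v}^2>0$ and $\mathbf{k}^2=-2dt^2<0$, the form restricted to $P$ has signature $(1,1)$.

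\textbf{Step 2: the factors lie in one nappe of the cone.} The vectors $\mathbf{w}\in P$ with $\mathbf{w}^2\ge 0$ and $\mathbf{w}\neq 0$ form two opposite closed nappes. The line $\RR\mathbf{k}$ has negative square, so it meets neither nappe. Hence $Z_\sigma$ restricted to the cone does not vanish, and it sends the two nappes to the two opposite rays $\pm\RR_{>0}Z_\sigma(\mathbf{v})$. Each $\mathbf{v}_i$ is the Mukai vector of a semistable object, so $\mathbf{v}_i^2\ge 0$, and $Z_\sigma(\mathbf{v}_i)$ lies on the positive ray. Therefore all the $\mathbf{v}_i$ lie in the same nappe.

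\textbf{Step 3: reverse Cauchy--Schwarz and the first bound.} On a Lorentzian plane, two vectors $\mathbf{a},\mathbf{b}$ in the same nappe satisfy
\[
\langle \mathbf{a},\mathbf{b}\rangle\ \ge\ \sqrt{\mathbf{a}^2\,\mathbf{b}^2}\ \ge\ 0,
\]
and $\langle\mathbf{a},\mathbf{b}\rangle=0$ forces $\mathbf{a},\mathbf{b}$ to be proportional isotropic vectors. One can check this in isotropic coordinates $\mathbf{a}=(a_1,a_2)$, $\mathbf{b}=(b_1,b_2)$ with $a_i,b_i\ge 0$. For $i\neq j$ the factors $\mathbf{v}_i$ and $\mathbf{v}_j$ are not proportional: they are distinct $\sigma_-$ HN factors, so they have different $\sigma_-$-phases. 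Hence $\langle\mathbf{v}_i,\mathbf{v}_j\rangle>0$, and since the pairing is integral, $\langle\mathbf{v}_i,\mathbf{v}_j\rangle\ge 1$.

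\textbf{Step 4: the case where both squares are positive.} By evenness of the lattice, $\mathbf{v}_i^2,\mathbf{v}_j^2\ge 2$, so Step 3 gives $\langle\mathbf{v}_i,\mathbf{v}_j\rangle\ge 2$. It remains to rule out equality. Equality $\langle\mathbf{v}_i,\mathbf{v}_j\rangle=2$ forces $\mathbf{v}_i^2=\mathbf{v}_j^2=2$, since otherwise $\sqrt{\mathbf{v}_i^2\mathbf{v}_j^2}\ge\sqrt 8>2$. Set $\mathbf{u}=\mathbf{v}_i-\mathbf{v}_j$. Then
\[
\mathbf{u}^2=2+2-4=0,\qquad \langle \mathbf{u},\mathbf{v}_i\rangle=2-2=0 .
\]
But the orthogonal complement of the positive vector $\mathbf{v}_i$ in the Lorentzian plane $P$ is negative definite. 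So $\mathbf{u}=0$, i.e. $\mathbf{v}_i=\mathbf{v}_j$, which contradicts Step 3. Hence $\langle\mathbf{v}_i,\mathbf{v}_j\rangle\ge 3$.

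\textbf{Step 5: the ``in particular''.} If $\delta_i=1$, this is the first bound. Otherwise $\mathbf{v}_i=\delta_i\mathbf{w}_i$ with $\mathbf{w}_i$ primitive isotropic, and $\mathbf{w}_i$ lies in the same nappe as $\mathbf{v}_i$. Moreover $\mathbf{w}_i$ is not proportional to $\mathbf{v}_j$, again by distinctness of the $\sigma_-$-phases. The argument of Step 3 then gives $\langle\mathbf{w}_i,\mathbf{v}_j\rangle\ge 1$, so
\[
\langle\mathbf{v}_i,\mathbf{v}_j\rangle=\delta_i\langle\mathbf{w}_i,\mathbf{v}_j\rangle\ge\delta_i .
\]

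The main point requiring care is Step 2, namely verifying that all factors lie in one nappe of the cone via the central charge, and recording that distinct HN factors are never proportional. The rest is elementary plane geometry.
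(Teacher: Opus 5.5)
Your proof is correct, and it takes a genuinely different route from the paper. The paper gives no argument: it cites \cite{myy1}, Lemma 5.3.4, and describes the proof as computational.

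You replace that computation with a short conceptual argument:
\begin{enumerate}
\item Restrict the Mukai form to the plane $P=Z_\sigma^{-1}(\RR Z_\sigma(\mathbf{v}))$. It is hyperbolic, because it contains the positive vector $\mathbf{v}$ and the negative kernel vector $(1,0,dt^2)$.
\item Use $\ker Z_\sigma$ to separate the two nappes of the positive cone, so that all factors lie in the nappe mapping to $\RR_{>0}Z_\sigma(\mathbf{v})$.
\item Finish with reverse Cauchy--Schwarz, integrality, and evenness of the lattice.
\end{enumerate}

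This buys several things.
\begin{itemize}
\item It is self-contained.
\item It handles factors of positive, zero and negative rank uniformly, since you never divide by $r_i$. This matters here: rank-zero factors such as $(0,H,-4)$ in Example \ref{BNexample} actually occur, and Proposition \ref{positive1} has to allow $r_i<0$.
\item It gives the sharper bound $\langle\mathbf{v}_i,\mathbf{v}_j\rangle\ge\sqrt{\mathbf{v}_i^2\mathbf{v}_j^2}$ for free.
\item It makes explicit exactly which hypotheses are used: a common $\sigma$-phase, $\mathbf{v}_i^2\ge 0$, and pairwise distinct $\sigma_-$-phases.
\end{itemize}

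The last hypothesis is essential, and the paper's phrase ``if $i\neq j$'' only leaves it implicit. For equal classes the bounds fail: two copies of a class of square $2$ pair to $2$, and two proportional isotropic classes pair to $0$. So the lemma is genuinely about distinct HN factors, not, say, Jordan--H\"older factors.

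The citation, for its part, buys brevity and places the estimate within the more general framework of \cite{myy1}. Your argument would adapt to that setting too: for any $\sigma_{(\beta,\omega)}$ the kernel of the central charge is negative definite on the algebraic Mukai lattice, so the analogue of $P$ is Lorentzian, and reverse Cauchy--Schwarz still applies.
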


\begin{notation}\label{notation}
For the remainder of the proof, we use the following notation.
\begin{enumerate}
\item Let 
\begin{align*}
S:&=\sum_{i<j}\langle \mathbf{v}_i,\mathbf{v}_j\rangle\\
C:&=\sum_{i<j}\langle \mathbf{v}_i,\mathbf{v}_j\rangle - \sum_{i=1}^s \delta_i+1
\end{align*}
\item Let $k$ be the number of vectors $\mathbf{v}_i$ with $\mathbf{v}_i^2>0$. 
\item Decompose $S$ into  
\[
S=S_{00}+S_{0+}+S_{++}
\]
 depending on whether the term $\langle \mathbf{v}_i,\mathbf{v}_j\rangle$ consists of 2 isotropic terms, 1 isotropic term, or no isotropic terms.
 \end{enumerate}
\end{notation}

We now get to the first main result of this subsection.
\begin{proposition}\label{isotropicmain}
In the notation above, if $\mathbf{v}_1,...,\mathbf{v}_s$ contains an isotropic factor, then 
\[
S\geq 1-\chi.
\] 
In particular, in the setting of Proposition \ref{wallcrossing}, the locus of $E\in \mathcal{M}_{\sigma_+}(\mathbf{v})$ whose $\sigma_-$ Harder-Narasimhan filtration contains an isotropic factor has codimension at least $1-\chi$.
\end{proposition}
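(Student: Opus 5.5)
The plan is to pick out a single isotropic factor and let it carry almost all of the required bound. Fix an index $i_0$ with $\mathbf{v}_{i_0}^2=0$, and write $\mathbf{v}_{i_0}=\ell\mathbf{w}$ with $\mathbf{w}$ primitive isotropic, so $\delta_{i_0}=\ell$.

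The basic identity is
\[
\langle \mathbf{v},\mathbf{v}_{i_0}\rangle=\sum_{j}\langle \mathbf{v}_j,\mathbf{v}_{i_0}\rangle=\mathbf{v}_{i_0}^2+\sum_{j\neq i_0}\langle \mathbf{v}_{i_0},\mathbf{v}_j\rangle=\sum_{j\neq i_0}\langle \mathbf{v}_{i_0},\mathbf{v}_j\rangle .
\]
Since $Z_\sigma(\mathbf{v}_{i_0})$ is a positive multiple of $Z_\sigma(\mathbf{v})$ by Lemma \ref{phaseequation}, the same holds for $\mathbf{w}$. Lemma \ref{isotropicpair} (whose conclusion is meant to read $\langle\mathbf{v},\mathbf{v}_i\rangle\geq 2-\chi$) therefore applies to $\mathbf{w}$ and gives $\langle\mathbf{v},\mathbf{w}\rangle\geq 2-\chi$. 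Hence
\[
\sum_{j\neq i_0}\langle \mathbf{v}_{i_0},\mathbf{v}_j\rangle=\ell\langle\mathbf{v},\mathbf{w}\rangle\geq \ell(2-\chi).
\]
By Lemma \ref{myypositive} every pairing $\langle\mathbf{v}_i,\mathbf{v}_j\rangle$ with $i\neq j$ is positive. So $S$ is at least the sum of the pairings involving $i_0$, which gives $S\geq \ell(2-\chi)\geq 2-\chi>1-\chi$. This proves the first claim.

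For the codimension statement I need $C\geq 1-\chi$, i.e. $S\geq \sum_i\delta_i-\chi$, which is stronger. The plan is to split $S=\sum_{j\neq i_0}\langle\mathbf{v}_{i_0},\mathbf{v}_j\rangle+S'$, where $S'$ collects the pairs not involving $i_0$. The first part is at least $\ell(2-\chi)$, and
\[
\ell(2-\chi)=\ell+1-\chi+(\ell-1)(1-\chi)\geq \delta_{i_0}+1-\chi .
\]
So it remains to show $S'+(\ell-1)(1-\chi)\geq \sum_{j\neq i_0}\delta_j-1$. To arrange this, I would choose $i_0$ to be the isotropic factor with the largest $\ell$, and order the remaining factors by decreasing $\delta_j$. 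By Lemma \ref{myypositive}, $\langle\mathbf{v}_a,\mathbf{v}_b\rangle\geq\max(\delta_a,\delta_b)$, so the pairs $(a,b)$ with $a<b$ among the remaining factors contribute at least $\delta_a$ for every $a$ except the last one. This leaves a deficit of $\delta_{\min}-1$, where $\delta_{\min}$ is the $\delta$ of the last remaining factor.

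If the last factor has positive square, then $\delta_{\min}=1$ and the deficit vanishes. If it is isotropic, with multiplicity $\ell'$, then $\ell'\leq\ell$ by the choice of $i_0$. In that case either $\chi\leq -1$ or $\ell\geq 2$ gives enough slack in $(\ell-1)(1-\chi)$, or one applies Lemma \ref{isotropicpair} a second time, to that factor, to obtain the missing amount. The case $s=2$ can be checked by hand: there
\[
S=\langle\mathbf{v},\mathbf{v}_1\rangle=\langle\mathbf{v},\mathbf{v}_2\rangle\geq \max(\delta_1,\delta_2)(2-\chi)\geq \delta_1+\delta_2-\chi .
\]
Once $C\geq 1-\chi$ is established, Proposition \ref{wallcrossing}, applied to each of the finitely many HN types containing an isotropic factor, gives the stated codimension bound.

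I expect the main obstacle to be this bookkeeping of the $\delta_j$ when several isotropic factors with large multiplicities occur, and in particular the edge case $\chi=0$, $\ell=\ell'$. There I would either use the strict inequality inside the proof of Lemma \ref{isotropicpair} to gain one extra unit, or sum the bounds $\langle\mathbf{v},\mathbf{v}_i\rangle\geq\delta_i(2-\chi)$ over all isotropic $i$, noting that each pair is counted at most twice.
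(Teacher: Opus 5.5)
Your proof is correct, but it takes a different route from the paper's.

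\textbf{The paper's route.} It applies Lemma \ref{isotropicpair} to \emph{every} isotropic factor and sums, getting $2S_{00}+S_{0+}\geq (2-\chi)D$ with $D=\sum_{\mathbf{v}_i^2=0}\delta_i$. It halves this to offset the double counting of $S_{00}$, adds $S_{0+}\geq kD$ and $S_{++}\geq 3\binom{k}{2}$ from Lemma \ref{myypositive}, and then splits into the cases $D\geq 2$ and $D=1$.

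\textbf{Your route.} You apply Lemma \ref{isotropicpair} only once, to an isotropic factor of maximal multiplicity $\ell$, so its pairings alone contribute $\ell(2-\chi)$. You then absorb the remaining $\delta_j$ with a chain of pairings, each bounded below by $\max(\delta_a,\delta_b)$. This gives
\[
S\geq \ell(2-\chi)+\sum_{j\neq i_0}\delta_j-\delta_{\min},
\]
and the required $C\geq 1-\chi$ follows because
\[
\ell(2-\chi)-\delta_{\min}-(\ell-\chi)=(\ell-\delta_{\min})-(\ell-1)\chi\geq 0 .
\]

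\textbf{Comparison.} Your version avoids the $D\geq 2$ versus $D=1$ split and needs only the $\max(\delta_a,\delta_b)$ form of Lemma \ref{myypositive}, not the sharper bound $\geq 3$. The paper's version treats all isotropic factors symmetrically, with no choice of a maximal one and no ordering. It also produces an extra margin of $3\binom{k}{2}$ when there are many positive-square factors, although that margin is not needed here.

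You were also right that the displayed conclusion $S\geq 1-\chi$ is weaker than what the codimension claim needs. The paper's proof actually establishes $C\geq 1-\chi$.

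\textbf{The obstacle you anticipated does not exist.} After your reduction you need $(\ell-1)(1-\chi)\geq \delta_{\min}-1$. Since $\chi\leq 0$ and $\delta_{\min}\leq \ell$, this holds immediately: $(\ell-1)(1-\chi)\geq \ell-1\geq \delta_{\min}-1$. Your edge case $\chi=0$, $\ell=\ell'$ is simply equality. So you need no second use of Lemma \ref{isotropicpair}, no strict inequality, and no separate check for $s=2$.

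\textbf{A minor slip.} In your $s=2$ check, the identity $S=\langle\mathbf{v},\mathbf{v}_2\rangle$ holds only when $\mathbf{v}_2$ is also isotropic. This does not matter, since the general argument already covers $s=2$.
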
 
\begin{proof}
We want to apply the bound in Lemma \ref{isotropicpair} to all isotropic factors. To this end, observe that 
 \begin{align*}
 \sum_{\mathbf{v}_i^2=0} \langle \mathbf{v},\mathbf{v}_i\rangle=2S_{00}+S_{0+}.
 \end{align*}
Now apply Lemma \ref{isotropicpair} to $\mathbf{w}_i$, where $\mathbf{w}_i$ is the primitive Mukai vector with $\mathbf{v}_i=\delta_i\mathbf{w}_i$. This yields
 \[
 2S_{00}+S_{0+}\geq (2-\chi)(\sum_{\mathbf{v}_i^2=0}\delta_i).
 \]
By Lemma \ref{myypositive}, we also have
\[
S_{0+}\geq k(\sum_{\mathbf{v}_i^2=0}\delta_i) \hspace{0.5cm} \text{and}\hspace{0.5cm}S_{++}\geq 3{k\choose 2},
\]
where $k$ is the number of positive square terms. Putting everything together, we can compute
\begin{align*}
C&=S - \sum_{i=1}^s \delta_i+1\\
&=\frac{2S_{00}+2S_{0+}}{2}+S_{++}- \sum_{i=1}^s \delta_i+1\\
&\geq \frac{2-\chi+k}{2}\left(\sum_{\mathbf{v}_i^2=0}\delta_i\right)+3{k\choose 2}-\sum_{i=1}^s\delta_i+1.
\end{align*}
If $\sum_{\mathbf{v}_i^2=0}\delta_i\geq 2$, it is easy to check that the above expression is $\geq 1-\chi$.\par
If $\sum_{\mathbf{v}_i^2=0}\delta_i=1$, then there is exactly one $\mathbf{v}_i$ which is isotropic, and all $\delta_i=1$. In particular, $S_{00}=0$, which means the first bound above reduces to $S_{0+}\geq 2-\chi$.  Thus, we get
\begin{align*}
C&=S_{0+}+S_{++}-(k+1)+1\\
&\geq (2-\chi)+3{k\choose 2}-k\\
&\geq 1-\chi,
\end{align*}
since $k\geq 1$. 
\end{proof}

So we may now assume all $\mathbf{v}_i^2>0$. The second main result of this subsection is the following. 

\begin{proposition}\label{positive1}
Let $\mathbf{v}_i=\langle r_i,m_iH,\chi_i\rangle$ with $\mathbf{v}_i^2>0$ be an HN factor, and suppose either $r_i<0$ or $\chi_i\geq 0$. Then $C\geq 1-\chi$. 
\end{proposition}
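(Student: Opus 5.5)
The idea is to mimic Lemma \ref{isotropicpair}: isolate the special factor $\mathbf{v}_i$ and bound its pairing with the remaining factors using Lemma \ref{phaseequation} and $\mathbf{v}_i^2>0$. The other pairs are handled by Lemma \ref{myypositive}. By Proposition \ref{isotropicmain} we may assume all $\mathbf{v}_j^2>0$. Then every $\delta_j=1$, and with $s$ factors
\[
C=S-s+1 .
\]
Put $\mathbf{u}=\mathbf{v}-\mathbf{v}_i=\sum_{j\neq i}\mathbf{v}_j$, writing $\mathbf{u}=(r_u,m_uH,\chi_u)$, so that
\[
S=\langle \mathbf{v}_i,\mathbf{u}\rangle+\sum_{j<k,\;j,k\neq i}\langle \mathbf{v}_j,\mathbf{v}_k\rangle .
\]
Lemma \ref{myypositive} bounds the second sum below by $3\binom{s-1}{2}$, which gives
\[
C\geq \langle \mathbf{v}_i,\mathbf{u}\rangle+3\binom{s-1}{2}-s+1 .
\]
For $s\geq 3$ the term $3\binom{s-1}{2}$ exceeds $s-2$. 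So in all cases it suffices to prove the key inequality
\[
\langle \mathbf{v}_i,\mathbf{u}\rangle \geq 2-\chi ,
\]
which is exactly what the case $s=2$ requires.

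To prove the key inequality, expand
\[
\langle \mathbf{v}_i,\mathbf{u}\rangle=2dm_im_u-r_i\chi_u-\chi_ir_u, \qquad \chi_u=\chi-\chi_i,\quad r_u=r-r_i,\quad m_u=m-m_i .
\]
By Lemma \ref{phaseequation} applied to both $\mathbf{v}_i$ and $\mathbf{u}$ (each has the same $\sigma$-phase as $\mathbf{v}$), both $m_i$ and $m_u$ are at least $1$. In particular $2dm_im_u\geq 2$, so the task is to control the sign of $-r_i\chi_u-\chi_ir_u$. The phase equation
\[
dt^2(mr_i-rm_i)=m\chi_i-\chi m_i,
\]
together with its analogue for $\mathbf{u}$, determines the sign of $mr_i-rm_i$ from the sign of $\chi_i$. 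This is the same mechanism used in Lemma \ref{isotropicpair}. The hypothesis $\mathbf{v}_i^2>0$ gives $dm_i^2>r_i\chi_i$, replacing the equality used in the isotropic case.

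I will split into the two hypotheses of the statement.

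In the case $\chi_i\geq 0$, the right side of the phase equation is $\geq 0$, since $\chi\leq 0$ and $m_i\geq 1$. Hence $mr_i\geq rm_i$, so $r_i>0$ and $r_i/m_i\geq r/m$. I then bound $\chi_ir$ above: when $\chi_i>0$, use $\chi_i<dm_i^2/r_i$ and $r\leq mr_i/m_i$ to get $\chi_i r< dm m_i$. Feeding this into $\langle \mathbf{v},\mathbf{v}_i\rangle-\mathbf{v}_i^2$, or directly into the expansion above, should leave a quantity of the form $dm_um_i+(\text{nonnegative})-\chi r_i$. This is at least $2-\chi$ once integrality is used; note that $-\chi r_i\geq -\chi$ because $r_i\geq 1$.

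In the case $r_i<0$, the object is sheaf-like only in its $H^{-1}$ part, and $\mathbf{u}$ has $r_u=r-r_i>r$. Here I will argue symmetrically: the phase equation now forces $m\chi_i<\chi m_i$, so $\chi_i<0$. The terms $-r_i\chi_u$ and $-\chi_ir_u$ are then handled by rewriting $\chi_u$ and $r_u$ through the phase equations, as in the second half of Lemma \ref{isotropicpair}.

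I expect the main obstacle to be the $r_i<0$ case, or the sub-case $\chi_i=0$, $\chi=0$, where the phase inequality degenerates to an equality. There I would first try to exploit $\mathbf{u}^2$: it need not be positive, but $\mathbf{u}$ is a sum of semistable classes of the same phase, so $\mathbf{u}^2\geq 0$ by Lemma \ref{myypositive}. Combining this with $\mathbf{v}_i^2>0$ and the equality $\mathbf{v}^2=\mathbf{v}_i^2+2\langle\mathbf{v}_i,\mathbf{u}\rangle+\mathbf{u}^2$ should recover the missing unit. If that fails, I will fall back on $\langle\mathbf{v}_i,\mathbf{u}\rangle\geq 3(s-1)$ from Lemma \ref{myypositive}, which suffices whenever $-\chi$ is small.
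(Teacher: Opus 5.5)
Your reduction matches the paper's. Since $\langle\mathbf{v}_i,\mathbf{u}\rangle=\langle\mathbf{v},\mathbf{v}_i\rangle-\mathbf{v}_i^2$, everything hinges on proving $\langle\mathbf{v}_i,\mathbf{u}\rangle\geq 2-\chi$. The inequality you propose to feed in does not prove this.

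In the case $\chi_i>0$, your bound $\chi_i r<dmm_i$ is the one from Lemma \ref{isotropicpair}, and it controls $\langle\mathbf{v},\mathbf{v}_i\rangle$. That was enough there because $\mathbf{v}_i^2=0$. Here $\mathbf{v}_i^2=2dm_i^2-2r_i\chi_i$ must also be subtracted. Substituting $\chi_i r<dmm_i=dm_i^2+dm_im_u$ into $\langle\mathbf{v}_i,\mathbf{u}\rangle=2dm_im_u-r_i\chi+2r_i\chi_i-r\chi_i$ gives only
\[
\langle\mathbf{v}_i,\mathbf{u}\rangle> dm_im_u-\chi r_i+(2r_i\chi_i-dm_i^2).
\]
The bracket is negative whenever $dm_i^2>2r_i\chi_i$, which $\mathbf{v}_i^2>0$ certainly allows. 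For instance, take $d=1$, $\mathbf{v}_i=(1,3H,1)$ and $\mathbf{u}=(0,H,\chi-1)$; both have the same $\sigma_t$-phase as $\mathbf{v}=(1,4H,\chi)$ at $t^2=4-3\chi$. Your chain then yields only $\langle\mathbf{v}_i,\mathbf{u}\rangle>-4-\chi$, whereas the true value is $7-\chi$, and no integrality argument closes that gap.

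The missing step is to bound the cross term $\chi_i r_u$ instead of $\chi_i r$. The condition $mr_i\geq rm_i$ is equivalent to $m_ir_u\leq m_ur_i$. Hence $\chi_ir_u\leq \chi_ir_im_u/m_i<dm_im_u$, i.e.\ $dm_im_u-\chi_ir_u\geq 1$. Then
\[
\langle\mathbf{v}_i,\mathbf{u}\rangle=2(dm_im_u-\chi_ir_u)+\chi_ir-\chi r_i\geq 2-\chi,
\]
which is exactly the paper's inequality $dm_i(m-m_i)-\chi_i(r-r_i)\geq 1$.

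The case $r_i<0$ is only gestured at in your proposal. The template you cite, the second half of Lemma \ref{isotropicpair}, produces $\chi r_i+1\leq dmm_i$. That again bounds $\langle\mathbf{v},\mathbf{v}_i\rangle$ and suffers the same loss once $\mathbf{v}_i^2$ is subtracted. What works is the following:
\begin{enumerate}
\item Rewrite the phase inequality $m\chi_i<\chi m_i$ as $m_u\chi_i<m_i\chi_u$.
\item Multiply by $r_i<0$ and use $r_i\chi_i<dm_i^2$ to get $dm_im_u-r_i\chi_u\geq 1$.
\item Conclude $\langle\mathbf{v}_i,\mathbf{u}\rangle=2(dm_im_u-r_i\chi_u)+r_i\chi-r\chi_i\geq 2-\chi$, since $r_i\chi\geq-\chi$ and $-r\chi_i>0$.
\end{enumerate}

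The obstacle you anticipate at $\chi_i=\chi=0$ is not one. For $\chi_i=0$ one has directly $\langle\mathbf{v}_i,\mathbf{u}\rangle=2dm_im_u-r_i\chi\geq 2-\chi$.

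Your proposed fallback through $\mathbf{u}^2\geq 0$ also points the wrong way. The identity $\mathbf{v}^2=\mathbf{v}_i^2+2\langle\mathbf{v}_i,\mathbf{u}\rangle+\mathbf{u}^2$ combined with $\mathbf{u}^2\geq 0$ gives an upper bound on $\langle\mathbf{v}_i,\mathbf{u}\rangle$, not a lower one.
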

\begin{proof}
The proof is very similar to that of Lemma \ref{isotropicmain}. The idea is that, in both cases, $\langle \mathbf{v},\mathbf{v}_i\rangle -\mathbf{v}_i^2$ is large, so $C$ will be large as well.
\begin{enumerate}
\item First suppose $r_i<0$. Then, by Lemma \ref{phaseequation}, 
\[
m\chi_i<\chi m_i.
\]
Because $\chi\leq 0$ and $m_i\geq 1$, this implies that $\chi_i<0$. Now subtract both sides by $m_i\chi_i$ and multiply both sides by $r_i$ to get
\[
r_i(m-m_i)\chi_i > r_im_i(\chi-\chi_i)
\]
Applying $dm_i^2>r_i\chi_i$ (from $\mathbf{v}_i^2>0$) yields 
\[
dm_i(m-m_i) - r_i(\chi-\chi_i)\geq 1.
\]
Now, as noted in the proof of Lemma \ref{isotropicmain}, every term in $S$ is bounded below by $3$. Thus, isolating the $i^{\text{th}}$ entry, we get
\[
C\geq \langle \mathbf{v},\mathbf{v}_i\rangle -\mathbf{v}_i^2 +3{s-1\choose 2}-s+1.
\]
Since $3{s-1\choose 2}-s+1\geq -1$ (here $s\geq 2$), it suffices to show that $\langle \mathbf{v},\mathbf{v}_i\rangle\geq 2-\chi$. Writing it out explicitly and using the inequality derived above, we get
\begin{align*}
\langle \mathbf{v},\mathbf{v}_i\rangle -\mathbf{v}_i^2&=2dm_i(m-m_i)-(\chi -2\chi_i)r_i -r\chi_i\\
&=2(dm_i(m-m_i)-r_i(\chi-\chi_i))+r_i\chi -r \chi_i r \\
&\geq 2-\chi
\end{align*}

\item Now suppose $\chi_i\geq 0$. Again using Lemma \ref{phaseequation}, this implies that
\[
mr_i>rm_i.
\]
Again, this implies that $r_i>0$. Now, similar to the previous case, we subtract by $r_im_i$, multiply by $\chi_i$, and apply $dm_i^2>\chi_im_i$ to get
\[
dm_i(m-m_i)-\chi_i (r-r_i)\geq 1.
\]
Thus, we get
\begin{align*}
\langle \mathbf{v},\mathbf{v}_i\rangle -\mathbf{v}_i^2&=2dm_i(m-m_i)-\chi r_i +(2r_i-r)\chi_i\\
&>2(dm_i(m-m_i)-\chi_i (r-r_i))+\chi_i r-\chi r_i \\
&\geq 2-\chi
\end{align*}
We now conclude as in the previous case.
\end{enumerate}
\end{proof}

Observe that, if $\chi=0$, having two nontrivial HN factors implies that one of them must have $\chi_i\geq 0$. Thus, this covers all cases for Mukai vectors with $\chi=0$. For the purposes of Section \ref{segresection}, we record this in the following proposition.

\begin{proposition}\label{codim1}
Let $(X,H)$ be a polarized abelian surface of Picard rank 1, with $H$ the ample generator. Let $\mathbf{v}=(r,mH,0)$ be a Mukai vector such that $\mathbf{v}^2>0$ and $r,m\geq 1$. Let $\sigma$ be a stability condition on a wall along the path $\sigma_{(0,tH)}$ and let $\sigma_+,\sigma_-$ be stability conditions sufficiently close to $\sigma$ on either sides of the wall. Then
\[
\operatorname{codim} \{E \in \mathcal{M}_{\sigma_+}(\mathbf{v})\;|\; \text{$E$ is $\sigma_-$-unstable}\}\geq 1.
\]
\end{proposition}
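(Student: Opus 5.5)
By Proposition \ref{wallcrossing}, the locus of $\sigma_+$-semistable objects that become $\sigma_-$-unstable is a finite union of loci $\mathcal{Z}$, one for each $\sigma_-$ Harder-Narasimhan type $(\mathbf{v}_1,\dots,\mathbf{v}_s)$ with $s\geq 2$. Each $\mathcal{Z}$ has codimension at least
\[
C=\sum_{i<j}\langle \mathbf{v}_i,\mathbf{v}_j\rangle-\sum_{i=1}^s\delta_i+1 .
\]
So the plan is to show $C\geq 1=1-\chi$ for every HN type that can occur, with $\chi=0$. I split into two cases according to whether an isotropic factor is present.

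\emph{Case 1: some $\mathbf{v}_i$ is isotropic.} Here Proposition \ref{isotropicmain} applies directly. Its hypotheses only require $\chi\leq 0$, $r,m\geq 1$ and $\mathbf{v}^2>0$, and it gives codimension at least $1-\chi=1$.

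\emph{Case 2: all $\mathbf{v}_i^2>0$.} I want to find a factor with $\chi_i\geq 0$, so that Proposition \ref{positive1} gives $C\geq 1-\chi=1$. Since $\sum_i\mathbf{v}_i=\mathbf{v}$, we have $\sum_i \chi_i=\chi=0$. With $s\geq 2$, either every $\chi_i$ is $0$, or some $\chi_i$ is positive. In both situations some factor has $\chi_i\geq 0$, and we are done.

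The only point needing care is that Proposition \ref{positive1} is stated for a factor with either $r_i<0$ or $\chi_i\geq 0$, and its proof in the case $\chi_i\geq0$ uses Lemma \ref{phaseequation} to get $mr_i>rm_i$. When $\chi=0$ the relation reads $dt^2(mr_i-rm_i)=m\chi_i$, so $\chi_i=0$ gives only $mr_i=rm_i$, not a strict inequality. This is the main obstacle. I would handle it by noting that if $\chi_i=0$ and $mr_i=rm_i$, then $\mathbf{v}_i$ is proportional to $\mathbf{v}$ (since $\chi=0$ too). Such a factor has the same $\sigma_-$-phase as $\mathbf{v}$, which is impossible for a genuine destabilizing HN factor; at the very least, not all factors can be proportional to $\mathbf{v}$. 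So I may choose the factor with $\chi_i\geq 0$ not proportional to $\mathbf{v}$. If all $\chi_i=0$, at least one $\mathbf{v}_i$ is then non-proportional, and Lemma \ref{phaseequation} gives $mr_i\neq rm_i$, hence $mr_i>rm_i$ or the reverse.

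If instead $mr_i<rm_i$ with $\chi_i=0$, I would simply pick a different factor. Summing $mr_i-rm_i$ over all $i$ gives $0$, so some factor has $mr_i-rm_i>0$. By the phase relation this factor has $\chi_i>0$, and Proposition \ref{positive1} applies to it. Taking the minimum over the finitely many HN types then gives the stated bound.
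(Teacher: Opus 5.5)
Your proof is correct and is essentially the paper's argument: isotropic factors are handled by Proposition \ref{isotropicmain}, and otherwise $\sum_i\chi_i=0$ with $s\geq 2$ gives a factor with $\chi_i\geq 0$, to which Proposition \ref{positive1} applies. Your extra care about $\chi_i=0$ addresses a point the paper glosses over, but it is not actually needed, since for $\chi=\chi_i=0$ one has directly $\langle\mathbf{v},\mathbf{v}_i\rangle-\mathbf{v}_i^2=2dm_i(m-m_i)\geq 2$, which is all the proof of Proposition \ref{positive1} uses. (Also, Lemma \ref{phaseequation} gives $mr_i=rm_i$ when $\chi_i=0$, so an HN type with all $\chi_i=0$ cannot occur, rather than yielding $mr_i\neq rm_i$ as you phrase it.)
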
 

\begin{remark}
This can actually be proven directly using existing results. Let $\mathbf{v}$ be a Mukai vector. Then for a Mukai vector $\mathbf{v}_1$, consider the following subset in the upper half plane $\mathcal{H}$:
\[
W_{\mathbf{v}_1} = \{(\beta,t)\;|\;\RR Z_{(\beta,t)}(\mathbf{v}_1) =\RR Z_{(\beta,t)}(\mathbf{v})\}.
\]
Every wall in the upper half plane of stability conditions is of this form. Theorem 5.3.5 of \cite{myy1} then states that the stable objects lost at a wall $W_{\mathbf{v}_1}$ has codimension $\geq 1$ if either $\langle \mathbf{v}_1,\mathbf{v}\rangle\neq1$ or $ \mathbf{v}_1^2\neq 0$. One can use Lemma \ref{phaseequation} to show that any $\mathbf{v}_1$ of an actual object satisfying $\langle \mathbf{v}_1,\mathbf{v}\rangle=1$ and $ \mathbf{v}_1^2= 0$ cannot have the same phase as $\mathbf{v}$ along $\sigma_t$. We leave this as an exercise for the reader.
\end{remark}

\subsection{Bounding lost objects in the BN locus}
In the remaining cases, we will need to bound the lost objects \emph{in the Brill-Noether locus}. This will actually require assuming Theorem \ref{BNmain} for smaller $m$, which means this section should be thought of as being part of an inductive argument proving Theorem \ref{BNmain}. We may also assume $\chi\leq -1$. 

\begin{assumptions}\label{assumption}
For the remainder of the subsection, we assume the following.
\begin{enumerate}
\item Assume $C< 1-\chi$ (there is nothing to prove otherwise).
\item Let $\mathbf{v}_i=\langle r_i,m_iH,\chi_i\rangle$
\begin{enumerate}
\item Assume $\mathbf{v}_i^2>0$ for all $i$. 
\item Assume $r_i\geq 0$ and $\chi_i\leq -1$.
\end{enumerate}
\end{enumerate}
Assumptions 2(a) and 2(b) are due to Propositions \ref{isotropicmain} and \ref{positive1}. 
\end{assumptions}

The key observation is the following. 

\begin{lemma}\label{homF}
Suppose $E$ is in the Brill-Noether locus of $M_{\sigma_+}(\mathbf{v})$, meaning $\Hom(\Oc_X,E)\neq 0$ in the derived category. Then $\Hom(\Oc_X,\mathcal{F}_i)\neq 0$ for a $\sigma_-$-HN factor $\mathcal{F}_i$. 
\end{lemma}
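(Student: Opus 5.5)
We argue by induction along the $\sigma_-$ Harder--Narasimhan filtration of $E$,
\[
0=\mathcal{E}_0\subset \mathcal{E}_1\subset \cdots\subset \mathcal{E}_s=E,\qquad \mathcal{F}_i=\mathcal{E}_i/\mathcal{E}_{i-1},
\]
using only that $\Hom(\Oc_X,-)$ is left exact on short exact sequences in the heart $\mathrm{Coh}^0(X)$. First we check that $\Oc_X$ itself lies in $\mathrm{Coh}^0(X)$, so that the relevant Hom groups are computed inside the abelian category. We have $\mu_{H,0}(\Oc_X)=0$, so $\Oc_X\in\mathcal{F}^0$. Hence $\Oc_X[1]\in \mathrm{Coh}^0(X)$, but $\Oc_X$ itself is not in the heart. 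Nevertheless, for any $F\in\mathrm{Coh}^0(X)$ the group $\Hom_{D^b}(\Oc_X,F)$ is still well behaved. Indeed, $\Oc_X=\Oc_X[1][-1]$, so $\Hom(\Oc_X,F)=\Ext^{-1}_{\mathrm{Coh}^0}(\Oc_X[1],F)$, which vanishes. This is not what we want, so instead we argue directly with the long exact sequence in the derived category.

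Each short exact sequence $0\to\mathcal{E}_{i-1}\to\mathcal{E}_i\to\mathcal{F}_i\to 0$ in $\mathrm{Coh}^0(X)$ is a distinguished triangle. Applying $\Hom(\Oc_X,-)$ gives the exact sequence
\[
\Hom(\Oc_X,\mathcal{E}_{i-1})\to\Hom(\Oc_X,\mathcal{E}_i)\to\Hom(\Oc_X,\mathcal{F}_i).
\]
So if $\Hom(\Oc_X,\mathcal{E}_i)\neq0$, then either $\Hom(\Oc_X,\mathcal{F}_i)\neq 0$ or $\Hom(\Oc_X,\mathcal{E}_{i-1})\neq0$. We start with $i=s$, where $\mathcal{E}_s=E$ and $\Hom(\Oc_X,E)\neq 0$ by hypothesis, and descend. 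Either we stop at some $i$ with $\Hom(\Oc_X,\mathcal{F}_i)\ne0$, or we reach $\Hom(\Oc_X,\mathcal{E}_1)\neq0$. Since $\mathcal{E}_1=\mathcal{F}_1$, the claim follows in both cases.

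There is no real obstacle; the only point needing care is to work with the triangles in $D^b(X)$ rather than with Hom inside the heart. Exactness of $\Hom(\Oc_X,-)$ on triangles holds regardless of whether $\Oc_X$ lies in the heart. We also record the sharper consequence that will be used next: the index can be taken to be the smallest $i$ with $\Hom(\Oc_X,\mathcal{E}_i)\ne0$. For that index, a nonzero map $\Oc_X\to E$ induces a nonzero map $\Oc_X\to\mathcal{F}_i$. This puts $\mathcal{F}_i$ in the Brill--Noether locus of $\mathcal{M}_{\sigma_-}(\mathbf{v}_i)$, to which the inductive hypothesis of Theorem \ref{BNmain} (smaller $m_i$) can then be applied.
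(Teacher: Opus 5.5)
Your proof is correct and is the paper's argument. You descend through the triangles $\mathcal{E}_{i-1}\to\mathcal{E}_i\to\mathcal{F}_i\to\mathcal{E}_{i-1}[1]$, use exactness of $\Hom(\Oc_X,-)$ at the middle term, and stop at $\mathcal{E}_1=\mathcal{F}_1$.

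Your surrounding remarks contain some errors you should fix:
\begin{itemize}
\item The opening aside is wrong. $\Hom(\Oc_X,F)=\Hom(\Oc_X[1],F[1])=\Ext^1(\Oc_X[1],F)$, not $\Ext^{-1}$, and it does not vanish in general; if it did, the Brill--Noether locus would be empty.
\item The property you actually use is exactness at the middle term, not left exactness.
\item In your last paragraph, a given section $\Oc_X\to E$ induces a nonzero map to $\mathcal{F}_i$ only for the smallest $i$ such that the section factors through $\mathcal{E}_i$.
\end{itemize}
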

\begin{proof}
Each step in the HN filtration gives an exact triangle 
\[
\mathcal{E}_i\to \mathcal{E}_{i+1}\to \mathcal{F}_i\to \mathcal{E}_i[1].
\]
Applying $\Hom(\Oc_X,-)$ yields an exact sequence 
\[
\Hom(\Oc_X,\mathcal{E}_i)\to\Hom(\Oc_X,\mathcal{E}_{i+1})\to \Hom(\Oc_X,\mathcal{F}_i).
\]
Since $\mathcal{E}_1=\mathcal{F}_1$ and $\mathcal{E}_n=E$, if $\Hom(\Oc_X,\mathcal{F}_i)=0$ for all $i$, applying the above exact sequences recursively shows that $\Hom(\Oc_X,E)=0$, a contradiction.
\end{proof}

Recall from Proposition \ref{wallcrossing}, we had first counted the dimension of all lost objects by counting parameters in constructing a HN filtration of type $\mathbf{v}_1,...,\mathbf{v}_s$. The above lemma tells us that, if we only look at the lost objects in the Brill-Noether locus, then one of the HN factors $\mathcal{F}_i$ must be in the Brill-Noether locus of its moduli stack. In particular, since $1\leq m_i<m$, we can use induction on $m$ to bound the dimension of such $\mathcal{F}_i$. The precise statement we need is the following. 

\begin{lemma}\label{wallcrossing2}
Suppose we're in the setting of Proposition \ref{wallcrossing}. Let $\mathcal{Z}$ be the locus of all semistable objects of $M_{\sigma_+}(\mathbf{v})$ lost at $\sigma$ with $\sigma_-$-HN type $\mathbf{v}_1,...,\mathbf{v}_s$, such that the $i^{\text{th}}$ HN factor $\mathcal{F}_i$ has $\Hom(\Oc_X,\mathcal{F})\neq 0$. Suppose the Brill-Noether locus in $\mathcal{M}_{\sigma-}(\mathbf{v}_i)$ has codimension $c_i$. Then $\mathcal{Z}$ has codimension at least 
\[
 \sum_{i<j}\langle \mathbf{v}_i,\mathbf{v}_j\rangle - \sum_{i=1}^s \delta_i+1 + c_i.
\]
In our established notation, this is $C+c_i$.
\end{lemma}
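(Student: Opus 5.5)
We follow the proof of Proposition \ref{wallcrossing} verbatim, except that in the parameter count for Harder--Narasimhan filtrations we replace the full moduli stack of the $i^{\text{th}}$ factor by its Brill--Noether locus. Fix the HN type $\mathbf{v}_1,\dots,\mathbf{v}_s$. Let $\mathcal{F}(\mathbf{v}_1,\dots,\mathbf{v}_s)^{\circ}_i\subset \mathcal{F}(\mathbf{v}_1,\dots,\mathbf{v}_s)^*$ be the substack of filtrations whose last term is $\sigma_+$-semistable and whose $i^{\text{th}}$ factor $\mathcal{F}_i$ satisfies $\Hom(\Oc_X,\mathcal{F}_i)\neq 0$. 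This condition is closed, by upper semicontinuity of $\hom(\Oc_X,-)$ in families. By construction, $\mathcal{Z}$ is the image of $\mathcal{F}(\mathbf{v}_1,\dots,\mathbf{v}_s)^{\circ}_i$ under $(\mathcal{E}_\bullet)\mapsto \mathcal{E}_s$. Uniqueness of HN filtrations makes this map injective on points, with automorphism groups compatible, so $\dim \mathcal{Z}\le \dim \mathcal{F}(\mathbf{v}_1,\dots,\mathbf{v}_s)^{\circ}_i$.

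Next we bound this dimension by the iterated extension count described in the remark after Theorem \ref{nueryoshioka}. The filtration is built one step at a time. At step $j$ we choose $\mathcal{F}_j\in\mathcal{M}_{\sigma_-}(\mathbf{v}_j)$ and then an extension class, which contributes
\[
\ext^1(\mathcal{F}_j,\mathcal{E}_{j-1})-\hom(\mathcal{F}_j,\mathcal{E}_{j-1}) = \sum_{l<j}\langle\mathbf{v}_j,\mathbf{v}_l\rangle .
\]
Here $\Ext^2(\mathcal{F}_j,\mathcal{E}_{j-1})=0$ by Serre duality and the phase ordering, exactly as in the Nuer--Yoshioka setting. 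The fibers of the map from filtrations to the tuple $(\mathcal{F}_1,\dots,\mathcal{F}_s)$ therefore have dimension $\sum_{j<l}\langle\mathbf{v}_j,\mathbf{v}_l\rangle$ regardless of which factors are chosen. So restricting the $i^{\text{th}}$ factor to the Brill--Noether locus of $\mathcal{M}_{\sigma_-}(\mathbf{v}_i)$ gives
\[
\dim\mathcal{Z}\le \sum_{j\neq i}\dim\mathcal{M}_{\sigma_-}(\mathbf{v}_j) + \big(\dim \mathcal{M}_{\sigma_-}(\mathbf{v}_i)-c_i\big) + \sum_{j<l}\langle \mathbf{v}_j,\mathbf{v}_l\rangle.
\]
To apply Theorem \ref{nueryoshioka} formally, we use that its statement is a relative dimension count over $\prod_j \mathcal{M}_{\sigma_-}(\mathbf{v}_j)$; the fiber dimension bound holds over any locally closed substack of the product, in particular over $\prod_{j\ne i}\mathcal{M}_{\sigma_-}(\mathbf{v}_j)\times \mathcal{B}_i$, where $\mathcal{B}_i$ is the Brill--Noether locus.

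Finally we run the same arithmetic as at the end of the proof of Proposition \ref{wallcrossing}. Theorem \ref{modulidim} gives $\dim \mathcal{M}_{\sigma_-}(\mathbf{v}_j)=\mathbf{v}_j^2+\delta_j$ and $\dim\mathcal{M}_{\sigma_+}(\mathbf{v})=\mathbf{v}^2+1$. Expanding $\mathbf{v}^2=\sum_j\mathbf{v}_j^2+2\sum_{j<l}\langle\mathbf{v}_j,\mathbf{v}_l\rangle$ then yields
\[
\operatorname{codim}\mathcal{Z}\ge \sum_{j<l}\langle\mathbf{v}_j,\mathbf{v}_l\rangle-\sum_j\delta_j+1+c_i = C+c_i .
\]
The main point needing care is the second step. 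We must justify that the extension-count bound of Theorem \ref{nueryoshioka} is fiberwise over the product of factor stacks, rather than only a global dimension formula, so that it restricts to the Brill--Noether substack. The first thing to check is that the Nuer--Yoshioka/Bridgeland argument (stratifying by $\hom(\mathcal{F}_j,\mathcal{E}_{j-1})$ and forming quotient stacks $[\Ext^1/\Hom]$) is local over the base of factors; if it is, the restriction is immediate.
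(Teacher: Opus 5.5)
Your proposal is correct and is essentially the paper's argument: the paper likewise reruns the Nuer--Yoshioka/Bridgeland iterated-extension count from Proposition \ref{wallcrossing} with $\mathcal{M}_{\sigma_-}(\mathbf{v}_i)$ replaced by its Brill--Noether locus. It justifies this by the same observation you make, namely that the extension stack $[\Ext^1(\mathcal{F}_j,\mathcal{E}_{j-1})/\Hom(\mathcal{F}_j,\mathcal{E}_{j-1})]$ has dimension $\sum_{l<j}\langle\mathbf{v}_j,\mathbf{v}_l\rangle$ independently of which factors are chosen. The fiberwise point you flag as needing care is exactly what the paper takes for granted in its one-line proof, and your final arithmetic is the computation the paper leaves implicit.
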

\begin{proof}
This follows from the proof of Proposition 4.2 in \cite{nueryoshioka} and the proof of Proposition \ref{wallcrossing}, but with $\mathcal{M}_{\sigma_-}(\mathbf{v}_i)$ replaced by its Brill-Noether locus. The idea is simply that, at the $i^{\text{th}}$ step, we build extensions
\[
0\to \mathcal{E}_i\to \mathcal{E}_{i+1}\to \mathcal{F}_i\to 0,
\]
where $\mathcal{F}_i$ is taken to be in the Brill-Noether locus of $\mathcal{M}_{\sigma_-}(\mathbf{v}_i)$ instead of $\mathcal{M}_{\sigma_-}(\mathbf{v}_i)$ itself. This doesn't affect the stack of extensions for a fixed $\mathcal{E}_i$ and $\mathcal{F}_i$, since, as we established, this is computed by $\langle \mathbf{v_i},\sum_{j<i} \mathbf{v}_j\rangle$.
\end{proof}

We now prove the main result of this subsection. As explained at the start of the section, the proof will require assuming Theorem \ref{BNmain} for smaller $m$.

\begin{proposition}\label{wallcrossinginduct}
Under assumption (\ref{assumption}), and assuming Theorem \ref{BNmain} for smaller $m$, the codimension of objects in the Brill-Noether locus of $\mathcal{M}_{\sigma_+}(\mathbf{v})$ that become unstable after crossing the wall is at least $2-\chi$.
\end{proposition}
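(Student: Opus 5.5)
The plan is to combine Lemma \ref{homF}, Lemma \ref{wallcrossing2} and the inductive hypothesis, and then reduce the claim to a numerical inequality. Fix one of the finitely many $\sigma_-$-HN types $\mathbf{v}_1,\dots,\mathbf{v}_s$ of destabilized objects, with $s\geq 2$. By Lemma \ref{homF}, every destabilized object in the Brill--Noether locus has some factor $\mathcal{F}_i$ with $\Hom(\Oc_X,\mathcal{F}_i)\neq 0$. So it suffices to show, for each index $i$, that the locus $\mathcal{Z}$ of Lemma \ref{wallcrossing2} has codimension at least $2-\chi$. By Lemma \ref{phaseequation}, $1\leq m_i<m$. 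By Assumptions \ref{assumption} we have $\mathbf{v}_i^2>0$, $r_i\geq 0$ and $\chi_i\leq -1$, and all $\delta_j=1$. Hence $C=S-s+1$.

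\emph{Case $r_i\geq 1$.} Since $\sigma_-$ is a stability condition not on a wall for $\mathbf{v}_i$ (possibly after shrinking), Theorem \ref{BNmain} for $m_i<m$ gives $c_i\geq 1-\chi_i$. Lemma \ref{wallcrossing2} then bounds the codimension below by $C+1-\chi_i$, so we need
\[
S-s+1+1-\chi_i\;\geq\; 2-\chi=2-\chi_i-\sum_{j\neq i}\chi_j,
\]
that is, $S\geq s-\sum_{j\neq i}\chi_j$. For this I would keep only the terms of $S$ that pair $\mathbf{v}_i$ with the other factors, and use
\[
\langle\mathbf{v}_i,\mathbf{v}_j\rangle=2dm_im_j-r_i\chi_j-r_j\chi_i .
\]
All three terms are nonnegative under the assumptions. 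With $r_i\geq1$, $m_i,m_j\geq1$ and $d\geq1$ this gives $\langle\mathbf{v}_i,\mathbf{v}_j\rangle\geq 2-\chi_j$. Summing over $j\neq i$ yields
\[
S\;\geq\; 2(s-1)-\sum_{j\neq i}\chi_j\;\geq\; s-\sum_{j\neq i}\chi_j,
\]
since $s\geq2$. This case is then done.

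\emph{Case $r_i=0$.} Here the inductive hypothesis does not apply directly, since Theorem \ref{BNmain} requires positive rank. I would only use $c_i\geq 0$ and try to show directly that $C\geq 2-\chi$. We have $\langle \mathbf{v}_i,\mathbf{v}_j\rangle=2dm_im_j-r_j\chi_i\geq 2d m_j+r_j$. Since $r=\sum r_j\geq 1$, some factor has positive rank. Write $\chi=\chi_i+\sum_{j\neq i}\chi_j$. The pairings among the factors $j\neq i$ are controlled by the first case's estimate $\langle\mathbf{v}_j,\mathbf{v}_k\rangle\geq 2-\chi_k$ whenever $r_j\geq1$, together with Lemma \ref{myypositive}. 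The remaining gap, the $-\chi_i$ contribution, I would close using the phase relation of Lemma \ref{phaseequation}. With $r_i=0$ it reads
\[
-dt^2 r m_i=m\chi_i-\chi m_i .
\]
This expresses $-\chi_i$ as $\frac{m_i}{m}(dt^2r-\chi)$, a fraction of $-\chi$ plus a positive term. This should let the $2dm_im_j$ terms (note $\sum_{j\neq i}m_j=m-m_i$) absorb it. If instead the torsion factor can be handled by a direct Brill--Noether bound for sheaves supported on curves, that would give $c_i\geq 1$ or better and simplify matters.

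I expect the $r_i=0$ case to be the main obstacle. The clean pairing bound degenerates when $r_i=0$, and the induction is unavailable there, so the argument must genuinely use the wall equation to trade $-\chi_i$ against $dm_i(m-m_i)$. The positive-rank case is routine once the pairings with $\mathbf{v}_i$ are isolated.
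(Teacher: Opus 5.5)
Your case $r_i\geq 1$ is correct and is essentially the paper's argument. The sum you bound, $\sum_{j\neq i}\langle\mathbf{v}_i,\mathbf{v}_j\rangle$, is exactly $\langle\mathbf{v},\mathbf{v}_i\rangle-\mathbf{v}_i^2$. The paper bounds this below by $2+\chi_i-\chi$ using $r_i\geq 1$, $\chi_j\leq -1$ and $r_j\geq 0$, and then adds $c_i\geq 1-\chi_i$ from the induction.

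The gap is the case $r_i=0$. There you propose to use only $c_i\geq 0$ and prove $C\geq 2-\chi$ numerically from the wall equation. This inequality is false under Assumptions \ref{assumption}, and the paper's Example \ref{BNexample} is a counterexample. Take $d=1$ and $\mathbf{v}=(2,3H,-11)=(0,H,-4)+(2,2H,-7)$, whose phases align at $t=1/\sqrt{2}$. All the assumptions hold: $\mathbf{v}_1^2=2$, $\mathbf{v}_2^2=36$, $r_j\geq 0$, $\chi_j\leq -1$, and $C=11<12=1-\chi$. Yet $C=11<13=2-\chi$. Your proposed trade of $-\chi_i=\frac{m_i}{m}(dt^2r-\chi)$ against the terms $2dm_im_j$ cannot close this deficit (here $2dm_1m_2=4$). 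Even a bound $c_i\geq 1$ on the torsion factor would not suffice in this example.

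The missing idea is that a rank-zero HN factor cannot carry a section at all. So the locus with $r_i=0$ and $\Hom(\Oc_X,\mathcal{F}_i)\neq 0$ is empty. The paper proves this in the lemma following the proposition, in three steps.

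First, the assumption $C<1-\chi$ forces $\chi_i<-dm_i^2$. Otherwise $m\chi_i<\chi m_i$ (Lemma \ref{phaseequation} with $r_i=0$) gives $dm_i(m-m_i)\geq \chi_i-\chi+1$. That yields $\langle\mathbf{v},\mathbf{v}_i\rangle-\mathbf{v}_i^2\geq 3-\chi$ and hence $C\geq 1-\chi$.

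Second, the vector $(0,m_iH,\chi_i)$ has no walls along $\sigma_t$. A destabilizing factor would have to have negative rank, and the phase equation together with $\mathbf{w}^2\geq 0$ would then force $\chi_i>-dm_i^2$. So $\mathcal{F}_i$ is a Gieseker semistable pure one-dimensional sheaf.

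Third, a nonzero map $\Oc_X\to\mathcal{F}_i$ has image $\Oc_D$ with $D\equiv kH$, $1\leq k\leq m_i$, and $\chi(\Oc_D)=-dk^2$. Semistability then gives $\chi_i\geq -dkm_i\geq -dm_i^2$, contradicting the first step.

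Without this vanishing, or some comparably strong control of sections of the torsion factor, your $r_i=0$ case is not closed.
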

\begin{proof}
First fix a $\mathbf{v}_i$. By Lemma \ref{homF}, it suffices to count the codimension of destabilized objects whose $i^{\text{th}}$ HN factor admits a section. This is enough since there are finitely many $i$. Now, by assumption (\ref{assumption}), we critically have 
\[
\chi_j-\chi>0 \hspace{0.5cm} \text{and} \hspace{0.5cm} r-r_j\geq 0
\]
 for all $j$. First assume the $r_i>0$ (our assumption is only that $r_i\geq 0$). Then the above inequalities yield
 \begin{align*}
\langle \mathbf{v},\mathbf{v}_i\rangle -\mathbf{v}_i^2&=2dm_i(m-m_i)+r_i(\chi_i-\chi)-(r-r_i)\chi_i\\
&\geq 2+\chi_i-\chi.
\end{align*}
Thus, as in the proof of Lemma \ref{positive1},
\[
C\geq \langle \mathbf{v},\mathbf{v}_i\rangle -\mathbf{v}_i^2 -1\geq 1+\chi_i-\chi
\]
Finally, by induction, the Brill-Noether locus of $\mathcal{M}_{\sigma_-}(\mathbf{v}_i)$ has codimension at least $1-\chi_i$. 
By Lemma \ref{wallcrossing2}, the desired codimension is at least
\[
(1+\chi_i-\chi)+(1-\chi_i)=2+\chi.
\]
It remains to deal with the $r_i=0$ case. This will be done in the following lemma, which shows that $r_i$ can't be 0.
\end{proof}
\begin{remark}
In the proof above, we used that the codimension of the Brill-Noether loci in $\mathcal{M}_{\sigma_-}(\mathbf{v}_i)$ had codimension $\geq 1-\chi_i$. This is why we needed to prove the stronger statement about the Brill-Noether locus in $\mathcal{M}_\sigma(\mathbf{v})$ instead of just $\mathcal{M}_X(\mathbf{v})$.
\end{remark}

\begin{lemma}
Let $\mathcal{F}_i\in \mathcal{M}_{\sigma_-}(\mathbf{v}_i)$, where $\mathbf{v}_i=(0,m_iH,\chi_i)$. Then, under assumption (\ref{assumption}), 
\[
\Hom(\Oc_X,\mathcal{F}_i)=0.
\]
\end{lemma}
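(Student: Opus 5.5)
Since $\mathbf{v}_i$ has rank $0$, the plan is to use the structure of $\mathrm{Coh}^0(X)$ to show that $\mathcal{F}_i$ is an honest torsion sheaf, and then show that such a sheaf has no sections because $\chi_i\leq -1$.

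\emph{Step 1: $\mathcal{F}_i$ is a sheaf.} An object $\mathcal{F}_i$ of $\mathrm{Coh}^0(X)$ sits in a triangle $H^{-1}(\mathcal{F}_i)[1]\to \mathcal{F}_i\to H^0(\mathcal{F}_i)$ with $H^{-1}(\mathcal{F}_i)\in\mathcal{F}^0$ torsion free and $H^0(\mathcal{F}_i)\in\mathcal{T}^0$. Since $\Hom(\Oc_X,H^{-1}(\mathcal{F}_i)[1])=\Ext^1(\Oc_X,H^{-1}(\mathcal{F}_i))$, this piece could contribute, so I want $H^{-1}(\mathcal{F}_i)=0$. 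Rank additivity gives $\rank H^0(\mathcal{F}_i)=\rank H^{-1}(\mathcal{F}_i)$. I would rule out $H^{-1}\neq 0$ using $\sigma_-$-semistability together with the fact that $\sigma_-$ is on the ray $\beta=0$ close to the wall:
\begin{itemize}
\item The subobject $H^{-1}(\mathcal{F}_i)[1]$ has $\mathrm{Im}Z\geq 0$, coming from degree $\leq 0$.
\item If its degree is $0$, then it has phase $1$, which is impossible for a proper subobject of a semistable object with $m_i\geq 1$.
\item If its degree is negative, I would compare slopes directly, or invoke the standard fact that $\sigma_{(0,t)}$-semistable objects of rank $0$ and positive degree are torsion sheaves; for instance, use that the torsion-free $\mu$-semistable sheaves in $\mathcal{F}^0$ form a $\sigma_t$-stable factor whose slope is too small.
\end{itemize}
This is the step I expect to be the main obstacle. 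The cleanest route is probably to show directly that the quotient $\mathcal{F}_i\to H^0(\mathcal{F}_i)$ cannot exist by the phase condition; failing that, I would use Lemma \ref{phaseequation} on $\mathbf{v}_i$. With $r_i=0$, that lemma gives $dt^2m=m\chi_i/m_i-\chi$, so $\chi_i/m_i > \chi/m$. This pins down $t$, and from it I would try to bound the possible destabilizing pieces.

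\emph{Step 2: reduce to a pure torsion sheaf.} Granting that $\mathcal{F}_i$ is a sheaf, it has rank $0$ and $c_1=m_iH$, so it is a torsion sheaf supported on a curve, possibly with a $0$-dimensional part $T$. Semistability kills $T$, since $\Oc$-skyscrapers have phase $1$ and would destabilize. Therefore $\mathcal{F}_i$ is a pure $1$-dimensional sheaf, and it is $\sigma_-$-semistable.

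\emph{Step 3: no sections.} Suppose there is a nonzero map $\Oc_X\to \mathcal{F}_i$. Its image is $\Oc_D$ for some curve $D$ in the support, or more generally a quotient $\Oc_X/I$ which is pure $1$-dimensional. Then $\Oc_D\subset\mathcal{F}_i$ is a subobject in $\mathrm{Coh}^0(X)$, since torsion sheaves lie in $\mathcal{T}^0$ and the quotient is again torsion. Write $D=aH$ with $1\leq a\leq m_i$. On an abelian surface $\chi(\Oc_D)=-D^2/2=-da^2$, so the slope satisfies $\mu_{\sigma}(\Oc_D)=\chi(\Oc_D)/(2da t)=-a/(2t)$, while $\mu_\sigma(\mathcal{F}_i)=\chi_i/(2dm_i t)$. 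Semistability then requires $-da\cdot m_i\leq \chi_i\cdot a$ roughly, that is, $\chi_i\geq -dm_i\cdot(\text{something})$. This by itself does not contradict anything, so I would instead combine it with Step 1's equation:
\begin{itemize}
\item $\mathcal{F}_i$ has the same $\sigma$-phase as $\mathbf{v}$, so $\chi_i/m_i=dt^2m/m\cdot\ldots$.
\item Explicitly, $\chi_i/m_i=\chi/m+dt^2$.
\item The subsheaf condition $-da\cdot$ versus $\chi_i/m_i$ then forces $dt^2+\chi/m\geq -da$ type bounds.
\end{itemize}
I would then check that these are incompatible with $\chi_i\leq -1$ and with $\mathcal{E}$ being $\sigma_+$-semistable in Gieseker-like range. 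If the numerics do not close, an alternative is to note that $\mathcal{F}_i\in\mathcal{M}_{\sigma_-}(\mathbf{v}_i)$ corresponds under the known identification to Gieseker-semistable $1$-dimensional sheaves of negative $\chi$, whose BN locus one handles by restricting to the support curve. There, for generic support, $\chi_i<0$ and semistability force $h^0=0$ via degree considerations.
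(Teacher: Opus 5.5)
You have a genuine gap: you never use part (1) of Assumption~\ref{assumption}, namely $C<1-\chi$. The lemma fails if only part (2) is imposed, so neither your Step~1 nor your Step~3 can be closed from the ingredients you list.

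A concrete counterexample: take $d=1$ and $\mathbf{v}=(1,3H,-2)=(0,H,-1)+(1,2H,-1)$.
\begin{itemize}
\item The three central charges are proportional at $t=1$, and part (2) of the assumption holds.
\item For $\Gamma\in|H|$, the sheaf $\Oc_\Gamma$ lies in $\mathcal{M}_{\sigma_-}(0,H,-1)$, since a class with $m_i=1$ has no walls. It has a section.
\item Here $C=4\geq 3=1-\chi$, so only part (1) excludes this case.
\end{itemize}
The same example refutes your fallback claim that semistability and $\chi_i<0$ force $h^0=0$.

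Your phase relation is also wrong. For $r_i=0$, Lemma~\ref{phaseequation} reads $-rdt^2m_i=m\chi_i-\chi m_i$. So $\chi_i/m_i=(\chi-rdt^2)/m<\chi/m$, not $\chi/m+dt^2$.

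The missing idea is that $C<1-\chi$ forces $\chi_i<-dm_i^2$. From $m\chi_i<\chi m_i$ you get $(m-m_i)\chi_i<m_i(\chi-\chi_i)$. So $\chi_i\geq -dm_i^2$ would give $dm_i(m-m_i)\geq \chi_i-\chi+1$, and then
\[
\langle\mathbf{v},\mathbf{v}_i\rangle-\mathbf{v}_i^2=2dm_i(m-m_i)-r\chi_i\geq 2(\chi_i-\chi+1)-r\chi_i\geq 3-\chi,
\]
using $\chi_i-\chi\geq 1$, $\chi_i\leq -1$ and $r\geq 1$. Isolating the $i$-th factor in $S$ as in Proposition~\ref{positive1} then gives $C\geq 1-\chi$, contradicting part (1).

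This one inequality closes both of your steps.
\begin{itemize}
\item \emph{Step 1.} Do not try to kill $H^{-1}(\mathcal{F}_i)$ at the given $t$; the ``standard fact'' you invoke is false in general for small $t$. Instead, show that $\mathbf{v}_i$ has no walls along $\sigma_t$. A rank-$0$ HN factor would be proportional to $\mathbf{v}_i$, so some factor $\mathbf{w}=(a,bH,c)$ has $a<0$ and $1\leq b<m_i$. The phase relation gives $m_ic<\chi_ib$, and $\mathbf{w}^2\geq 0$ gives $c\geq db^2/a\geq -db^2$. Hence $\chi_i>-dbm_i>-dm_i^2$, a contradiction. Therefore $\mathcal{M}_{\sigma_-}(\mathbf{v}_i)$ is the Gieseker moduli space of pure $1$-dimensional sheaves.
\item \emph{Step 3.} The bound $\chi_i\geq -dam_i\geq -dm_i^2$ that you correctly extract from $\Oc_D\subset\mathcal{F}_i$ now contradicts $\chi_i<-dm_i^2$. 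This part of your outline matches the paper's final step.
\end{itemize}
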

\begin{proof}
We claim that, for $\mathbf{v}_i=(0,m_iH,\chi_i)$, there are no walls along our path $\sigma_t$. For contradiction, suppose $\sigma=\sigma_t$ is an actual wall, so there is a $\sigma_+$ stable object with some nontrivial $\sigma_-$-HN factor $\mathbf{w}$. First suppose that $\mathbf{w}$ had rank $=0$. Then Lemma \ref{phaseequation} implies that $\mathbf{w}=\lambda \mathbf{v}_i$ for some $\lambda$. But then 
\[
\phi_{\sigma_-}(\mathbf{w})=\phi_{\sigma_-}(\mathbf{v}_i)
\]
contradicting the fact that $\mathbf{w}$ is a nontrivial HN factor of $\mathbf{v}_i$.\par
Thus, there must be some HN factor that has negative rank, say $\mathbf{w}=(a,bH,c)$ with $a<0$. By Lemma \ref{phaseequation}, we have
\[
 1\leq b<m_i \hspace{0.5cm} \text{and}\hspace{0.5cm} dt^2(m_ia)=m_ic-\chi_ib.
\]
Since $a<0$, we have $dt^2(m_ia)<0$ as well, so
\[
m_ic<\chi_i b
\]
Nonnegativity of $\mathbf{w}^2$ yields $\frac{db^2}{a}\leq c$. Thus, we get a chain of inequalities
\[
\chi_i b> m_ic\geq \frac{db^2m_i}{a}\geq -db^2m_i.
\] 
Dividing by $b$ and using $ b<m_i$, we get
\[
\chi_i > -dm_i^2.
\]
\begin{claim*}
This can't happen. In fact, we must have $\chi_i < -dm_i^2$.
\end{claim*}
By Lemma \ref{phaseequation},
\[
m\chi_i<\chi m_i.
\]
Subtracting $m_i\chi_i$ yields
\[
(m-m_i)\chi_i<m_i(\chi-\chi_i).
\]
Thus, $\chi_i \geq -dm_i^2$ would imply that
\[
dm_i(m-m_i)\geq \chi_i-\chi+1.
\]
But then 
\begin{align*}
\langle \mathbf{v},\mathbf{v}_i\rangle-\mathbf{v}_i^2&=2dm_i(m-m_i)-\chi_ir\\
&\geq 2(\chi_i-\chi+1)-\chi_ir\\
&=(\chi_i-\chi)+(1-r)\chi_i+2-\chi\\
&\geq 3-\chi,
\end{align*}
where, in the last step, we used assumption (\ref{assumption}) to get $\chi_i-\chi>0$. Then, as in the proof of Lemma \ref{positive1}, this implies that 
\[
C\geq 1-\chi,
\]
which violates assumption (\ref{assumption}).\par
Thus, we've shown that $\mathbf{v}_i=(0,m_iH,\chi_i)$ has no walls along our path $\sigma_t$. In particular, identifying the Bridgeland moduli space with the Gieseker moduli space, we may take $\mathcal{F}_i$ to be a rank 0 coherent sheaf of pure dimension 1 (since $m_i\neq 0$). Suppose $\Hom(\Oc_X,\mathcal{F}_i)\neq 0$. Such a map would have image $\Oc_D$ for some divisor $D\equiv kH$ for some $1\leq k \leq m_i$. By the ideal sheaf sequence 
\[
0\to \Oc_X(-D)\to \Oc_X\to \Oc_D\to 0,
\]
we get that $\chi(\Oc_D)=-dk^2$. Since $\mathcal{F}_i$ is pure dimension 1, Gieseker stability depends on the Euler characteristic divided by $c_1.H$. Thus, semistability of $\mathcal{F}_i$ implies that
\[
\frac{-dk^2}{2dk}\leq \frac{\chi_i}{2dm_i}.
\]
Rearranging and using $k\leq m_i$ yields
\[
\chi_i\geq-dm_i^2.
\]
But we showed in the claim above that this can't happen.
\end{proof}

This completes all the wall crossing estimates. In summary, we have proven the following.

\begin{proposition}\label{wallcrossingfinal}
Let $(X,H)$ be a polarized abelian surface of Picard rank 1, with $H$ the ample generator. Let $\mathbf{v}=(r,m,\chi)$ be a Mukai vector such that $\mathbf{v}^2>0$, $r,m\geq 1$ and $\chi\leq -1$. Let $\sigma$ be a stability condition on a wall along the path $\sigma_{(0,tH)}$ and let $\sigma_+,\sigma_-$ be stability conditions sufficiently close to $\sigma$ on either sides of the wall. Then, if Theorem \ref{BNmain} is true for smaller $m$, then  
\[
\operatorname{codim}\left( \{E\text{ in the BN locus of } \mathcal{M}_{\sigma_+}(\mathbf{v})\} \cap \{E \text{ is $\sigma_-$-unstable}\} \right)\geq 1-\chi.
\]
\end{proposition}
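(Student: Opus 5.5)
The plan is to assemble the proposition from the wall-crossing estimates already established, organized by a case split on the $\sigma_-$ Harder-Narasimhan type. Since only finitely many HN types $\mathbf{v}_1,\dots,\mathbf{v}_s$ occur for $\sigma_+$-semistable objects destabilized at $\sigma$ (local finiteness, as in the remark after Proposition \ref{wallcrossing}), it suffices to bound, for each fixed HN type, the codimension of the locus of destabilized objects in the BN locus with that type. Each such locus is contained in the locus $\mathcal{Z}$ of all destabilized objects of that type. So whenever Proposition \ref{wallcrossing} already gives $C\geq 1-\chi$, we are done for that type without using the Brill-Noether condition.

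\textbf{Step 1 (types handled without the BN condition).}
\begin{itemize}
\item If some factor $\mathbf{v}_i$ is isotropic, Proposition \ref{isotropicmain} gives $S\geq 1-\chi$, and the computation there shows $C\geq 1-\chi$.
\item If all $\mathbf{v}_i^2>0$ and some factor has $r_i<0$ or $\chi_i\geq 0$, Proposition \ref{positive1} gives $C\geq 1-\chi$.
\item If $C\geq 1-\chi$ for any other reason, there is likewise nothing to prove.
\end{itemize}

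\textbf{Step 2 (remaining types).} What remains are exactly the types satisfying Assumptions \ref{assumption}: all $\mathbf{v}_i^2>0$, $r_i\geq 0$, $\chi_i\leq -1$, and $C<1-\chi$. For these I would invoke Lemma \ref{homF}. Any destabilized $E$ in the BN locus has some HN factor $\mathcal{F}_i$ with $\Hom(\Oc_X,\mathcal{F}_i)\neq 0$. So the BN part of $\mathcal{Z}$ is a finite union over $i$ of loci where the $i$th factor lies in the BN locus of $\mathcal{M}_{\sigma_-}(\mathbf{v}_i)$.

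\begin{itemize}
\item \emph{Factors with $r_i=0$.} The lemma following Proposition \ref{wallcrossinginduct} shows $\Hom(\Oc_X,\mathcal{F}_i)=0$, so these contribute nothing.
\item \emph{Factors with $r_i>0$.} By Lemma \ref{phaseequation} we have $1\leq m_i<m$, since $s\geq 2$ and all $m_j\geq 1$. We also have $\chi_i\leq -1\leq 0$ and $\mathbf{v}_i^2>0$. So the inductive hypothesis (Theorem \ref{BNmain} for smaller $m$, applied to $\sigma_-$, which lies off a wall) gives BN codimension $c_i\geq 1-\chi_i$.
\end{itemize}

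Lemma \ref{wallcrossing2} then gives codimension at least $C+c_i$. Proposition \ref{wallcrossinginduct} estimates this as $(1+\chi_i-\chi)+(1-\chi_i)=2-\chi$; the displayed ``$2+\chi$'' there is a typo. This exceeds $1-\chi$.

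\textbf{Main obstacle.} The main point to check carefully is the inductive step for factors with $r_i>0$: that the hypotheses of Theorem \ref{BNmain} genuinely hold for $\mathbf{v}_i$ at $\sigma_-$. This needs $r_i,m_i\geq 1$ and $\chi_i\leq 0$, supplied by Assumptions \ref{assumption} and Lemma \ref{phaseequation}. It also needs $\sigma_-$ to lie off every wall for $\mathbf{v}_i$; if necessary, shrink $\sigma_-$ toward $\sigma$ using local finiteness of walls.

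\textbf{Conclusion.} Taking the minimum over the finitely many HN types and factor indices $i$ gives codimension at least $1-\chi$ for every type, which proves the proposition.
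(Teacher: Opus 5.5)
Your proposal is correct and follows the paper's argument essentially step for step. Propositions \ref{isotropicmain} and \ref{positive1} dispose of every HN type outside Assumptions \ref{assumption}; for the remaining types, Lemma \ref{homF}, the rank-zero vanishing lemma, the induction hypothesis and Lemma \ref{wallcrossing2} give codimension at least $2-\chi$. You also correctly read the displayed ``$2+\chi$'' in Proposition \ref{wallcrossinginduct} as a typo for $2-\chi$, and you rightly note that the isotropic case actually bounds $C$, not just $S$.
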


\begin{example}\label{BNexample}
This example shows that the inductive argument bounding objects in the Brill-Noether loci is necessary, i.e. there is no way to just crudely bound \emph{all} the lost objects. Suppose $H^2=2$ so that $d=1$, and consider 
\[
\mathbf{v}=(2,3H,-11)=(0,H,-4)+(2,2H,-7)=\mathbf{v}_1+\mathbf{v}_2.
\]
One can check that the phases of $\mathbf{v},\mathbf{v}_1$, and $\mathbf{v}_2$ align at $t=\sqrt{\frac{1}{2}}$. Now,
\[
\mathbf{v}_1^2=2,\hspace{0.5cm}\mathbf{v}_2^2=36\hspace{0.5cm}\langle \mathbf{v}_1, \mathbf{v}_2\rangle=12.
\]
Thus, the dimension of the objects with this $\sigma_-$ HN type have codimension 
\[
12-1-1+1=11,
\]
but the codimension we need is $1-(-11)=12$. However, we showed that the $\mathbf{v}_1$ can't have any objects in the Brill-Noether locus, so sections must come from the $\mathbf{v}_2$ factor. The induction hypothesis implies that the Brill-Noether locus in $\mathcal{M}_{\sigma_-}(\mathbf{v}_2)$ has codimension $\geq8$. Thus, the codimension of objects in the Brill-Noether locus with this $\sigma_-$-HN type has codimension at least
\[
11+8=19,
\]
which is greater than the required 12.
\end{example}

\subsection{Completing the argument}
Finally, we put everything together to prove Theorem \ref{BNmain}, which we first recall.

\begin{theorem*}
Let $(X,H)$ be a polarized abelian surface of Picard rank 1, with $H$ the ample generator. Let $\mathbf{v}=(r,mH,\chi)$ be the Mukai vector of a stable sheaf with $r,m\geq 1$ and $\chi\leq 0$. Let $\sigma_t=\sigma_{(0,t)}$ be a stability condition not on a wall. Then the Brill-Noether locus 
\[
\mathcal{B}^1_\sigma(\mathbf{v})\subset \mathcal{M}_\sigma(\mathbf{v})
\]
 has the codimension $\geq 1-\chi$. 
\end{theorem*}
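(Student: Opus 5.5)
Argue by induction on $m\geq 1$, following the Strategy. Fix $\mathbf{v}=(r,mH,\chi)$ with $r,m\geq 1$, $\chi\leq 0$, and assume the theorem for all Mukai vectors (of the same shape) with smaller $m$. First dispose of $\mathbf{v}^2=0$ separately if needed: then the stack is small and the statement should follow from the explicit description via Fourier--Mukai. So the main case is $\mathbf{v}^2>0$.

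\emph{Step 1: reduction to small $t$.} The walls for $\mathbf{v}$ along the path $\sigma_{(0,t)}$ are locally finite and, for $t\gg0$ or $t\ll 1$, there are no further walls (large $t$ recovers Gieseker stability; small $t$ corresponds under $\Phi[1]$ to large $\frac{1}{dt}$). Hence only finitely many walls separate any chamber from the chamber at $t\ll1$. At each wall $\sigma$, with adjacent $\sigma_\pm$, compare the Brill--Noether loci in $\mathcal{M}_{\sigma_+}(\mathbf{v})$ and $\mathcal{M}_{\sigma_-}(\mathbf{v})$. They agree away from the objects destabilized on either side. For $\chi\leq -1$, Proposition \ref{wallcrossingfinal} (using the induction hypothesis) shows the destabilized part of the Brill--Noether locus has codimension $\geq 1-\chi$. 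The same argument applies with the roles of $\sigma_+$ and $\sigma_-$ swapped, since the numerics in Propositions \ref{isotropicmain}, \ref{positive1}, \ref{wallcrossinginduct} only used the phase equation. For $\chi=0$, Proposition \ref{codim1} gives codimension $\geq1$. Thus, if the Brill--Noether locus in one chamber has every component of codimension $\geq 1-\chi$, so does the locus in the adjacent chamber: each component there either meets the common open part, where it has the same dimension, or lies in the destabilized locus. Inducting across the finitely many walls, it suffices to treat a single chamber with $t\ll 1$. This works in both directions from the given $t$.

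\emph{Step 2: Fourier--Mukai at $t\ll 1$.} By Theorem \ref{fouriermukai}(3), $\Phi[1]$ gives $\mathcal{M}_{(0,t)}(r,mH,\chi)\cong\mathcal{M}_{(0,\frac1{dt})}(-\chi,m\hat H,-r)$. The latter is the Gieseker stack $\mathcal{M}_{\hat X}(\mathbf{w})$ with $\mathbf{w}=(-\chi,m\hat H,-r)$. Full faithfulness gives $\Hom(\Oc_X,E)=\Hom(\Phi[1]\Oc_X,\Phi[1]E)$, and $\Phi[1](\Oc_X)$ is a shift of the skyscraper $\Oc_{\hat 0}$; by Theorem \ref{fouriermukai}(2) this becomes $\Ext^2(\Oc_{\hat0},G)\cong\Hom(G,\Oc_{\hat 0})^\vee$ (or equivalently $\Ext^1(\Oc_{\hat0},G)$, up to the shift bookkeeping) for $G=\Phi[1]E$. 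When $\chi=0$, $G$ is a torsion sheaf of rank $0$ and the condition is that $\hat 0$ lies in its support, which visibly has codimension $\geq1$ by translating. When $\chi<0$, $G$ is torsion free of rank $-\chi$, and the condition becomes that $G$ fails to be locally free at $\hat0$, i.e.\ $\Ext^1(\Oc_{\hat 0},G)\neq0$.

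\emph{Step 3: codimension of sheaves singular at a point.} Show that the locus of $G\in\mathcal{M}_{\hat X}(\mathbf{w})$ not locally free at $\hat 0$ has codimension $\geq 1-\chi=\operatorname{rank}G+1$. Stratify by the length $\ell$ of $G^{\vee\vee}/G$ at $\hat 0$. Such $G$ are determined by $G^{\vee\vee}$, which is $\mu$-semistable with $\mathbf{v}=\mathbf{w}+(0,0,\ell)$, together with a point of the punctual Quot scheme of length-$\ell$ quotients of $G^{\vee\vee}$ supported at $\hat0$. That Quot scheme has dimension $\ell(\operatorname{rank}-1)$ (Ellingsrud--Lehn). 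So the stratum has dimension at most $(\mathbf{w}^2-2\ell\operatorname{rank}+1)+\ell\operatorname{rank}-\ell+O(1)$, where the $O(1)$ accounts for $\mu$-semistable rather than Gieseker factors, and this should give codimension $\ell(\operatorname{rank}+1)\geq \operatorname{rank}+1$. Note that $\mathbf{w}^2-2\ell\operatorname{rank}=(\mathbf{w}+(0,0,\ell))^2$.

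The main obstacle is Step 3's dimension bound on the stack of double duals, which may be $\mu$-semistable but Gieseker unstable, or even strictly semistable. One needs $\dim\leq \mathbf{u}^2+1$ for the stack of $\mu$-semistable sheaves of vector $\mathbf{u}$. I would try to get this by using Theorem \ref{nueryoshioka}-type HN and JH stratification estimates together with Lemma \ref{myypositive}. If that fails, I would instead use the fact that translations act transitively: the locus singular at $\hat0$ has codimension two less than the locus singular somewhere ($\dim\hat X=2$). The latter is the non-locally-free locus, whose codimension is $\geq \operatorname{rank}-1$... this is insufficient in general, so the Quot-scheme stratification is the route I would pursue.
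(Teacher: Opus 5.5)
Your Steps 1 and 2 follow the paper's route. You push the bound across the finitely many walls using Propositions \ref{wallcrossingfinal} and \ref{codim1}. At $t\ll 1$ you use $\Phi[1]$ to identify the Brill--Noether locus with sheaves $G$ on $\hat X$ having $\Ext^1(\Oc_{\hat{0}},G)\neq 0$. Your treatment of $\chi=0$, via the support of a pure one-dimensional sheaf passing through $\hat{0}$, covers a case the paper's write-up passes over: Theorem \ref{singular} needs positive rank on the dual side. Two small corrections:
\begin{itemize}
\item $\Phi(\Oc_X)=\Oc_{\hat{0}}[-2]$, so the condition is $\Ext^1(\Oc_{\hat{0}},G)\neq 0$. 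It is not equivalent to $\Ext^2(\Oc_{\hat{0}},G)\cong\Hom(G,\Oc_{\hat{0}})^\vee\neq 0$, which holds for every $G$ of positive rank.
\item $\mathbf{v}^2=2dm^2-2r\chi>0$ always, so there is no isotropic case for $\mathbf{v}$ to dispose of.
\end{itemize}

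The genuine gap is Step 3. You leave it open, and the bound you say you need there is false. Put $\rho=\operatorname{rank}G=-\chi$, note that $\chi(G)=-r$, let $\ell$ be the total length of $G^{\vee\vee}/G$, and set $\mathbf{u}=\mathbf{v}(G^{\vee\vee})$. Then $\mathbf{u}^2=\mathbf{v}(G)^2-2\rho\ell$.

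The count has no slack: for $\ell=1$ the correct estimate gives codimension exactly $\rho+1$. So an unspecified $O(1)$ in the dimension of the stack of double duals is fatal.

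Moreover, $\dim\le\mathbf{u}^2+1$ fails when $\mathbf{u}=k\mathbf{w}$ is isotropic with $k\ge 2$, since that stack has dimension $k$. This case genuinely occurs: take $G^{\vee\vee}=L_1\oplus L_2$ with $L_1\not\cong L_2$ numerically equivalent; general colength-$\ell$ subsheaves are stable for $\ell$ large.

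The paper splits on $\mathbf{u}^2$:
\begin{itemize}
\item If $\mathbf{u}^2>0$, Kurihara--Yoshioka's Lemma 3.8 gives exactly $\dim=\mathbf{u}^2+1$ for the stack of $\mu$-semistable sheaves.
\item If $\mathbf{u}^2=0$ and $\rho\ge 2$, then $\sum_i\mathbf{v}_i^2/r_i=\mathbf{u}^2/\rho=0$ over the $\mu$-Jordan--H\"older factors. So every factor is isotropic with the same $\chi_i/r_i$. Hence $G^{\vee\vee}$ is Gieseker semistable and its stack has dimension $k$. The excess is paid for by $\mathbf{v}(G)^2=2\rho\ell$ together with $k\mid\chi(G^{\vee\vee})=\ell-r$. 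This gives codimension $\ge(\rho-1)\ell-k+3\ge(\rho-2)\ell+r+3\ge\rho+1$.
\item Rank one is handled directly via $G=I_Z\otimes L$.
\end{itemize}

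Your stratification also needs repair.
\begin{itemize}
\item The punctual Quot scheme of length-$\ell$ quotients of a rank-$\rho$ bundle at a point has dimension $\rho\ell-1$ (Ellingsrud--Lehn), not $\ell(\rho-1)$.
\item Demanding that all of $G^{\vee\vee}/G$ be supported at $\hat{0}$ omits sheaves singular at $\hat{0}$ and elsewhere.
\end{itemize}
The paper instead uses the locus in $\operatorname{Quot}(G^{\vee\vee},\ell)$ of quotients whose support contains $\hat{0}$, which has dimension $\ell(\rho+1)-2$. Combined with $\dim=\mathbf{u}^2+1$, this yields codimension $\ge(\rho-1)\ell+2\ge\rho+1$.
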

\begin{proof}
We proceed by induction on $m$. For $m=1$, recall that there are no walls along the path $\sigma_t$, since any HN factor $(r_i,m_iH,\chi_i)$ needs to have $m_i\geq 1$ by Lemma \ref{phaseequation}. Thus, the $m=1$ case only deals with the Gieseker space, and this is part of Theorem 1.2 of \cite{bayerli} (see also Theorem 7.9 of \cite{coskunnueryoshioka}). \par
For the inductive step, Proposition \ref{wallcrossingfinal} applies. Thus, at every wall $\sigma$, the locus of semistable objects in the Brill-Noether locus of $\mathcal{M}_{\sigma_+}(\mathbf{v})$ which become $\sigma_-$- unstable has codimension at least $1-\chi$.\par
Now let $\sigma$ be a stability condition not on a wall, and let $0<t\ll1$. By local finiteness of walls and the inductive hypothesis, 
\[
\operatorname{codim}_{\mathcal{M}_\sigma(\mathbf{v})}( \mathcal{B}^1_\sigma(\mathbf{v})\setminus \mathcal{M}_{\sigma_{t}}(\mathbf{v}))\geq 1-\chi,
\]
i.e. the sheaves in $\mathcal{B}^1_\sigma(\mathbf{v})$ that were destabilized before making it to $\mathcal{M}_{\sigma_{t_1}}(\mathbf{v})$ has codimension at least $1-\chi$. Thus, it will suffice to prove that the Brill-Noether locus of $\mathcal{M}_{\sigma_{t}}(\mathbf{v})$ has codimension $1-\chi$.\par
To this end, apply the Fourier-Mukai transform $\Phi[1]$ to $\mathcal{M}_{\sigma_{t}}(\mathbf{v})$. Recall from Theorem \ref{fouriermukai} that $\Phi[1]$ is induces an isomorphism
\[
\Phi[1]:\mathcal{M}_{\sigma_{t}}(\mathbf{v}) \cong \mathcal{M}_{\hat{X}}(-\chi,m\hat{H},-r).
\]
Since $\Phi[1]$ is also an equivalence, if $E\in \mathcal{M}_{\sigma_{t}}(\mathbf{v})$, 
\begin{align*}
\Hom(\Oc_X,E)&\cong \Hom(\Phi[1](\Oc_X),\Phi[1](E))\\
&\cong\Ext^1(\Oc_{0},F),
\end{align*}
where $\Oc_{0}$ denotes the skyscraper sheaf at the point $[\Oc_{\hat{X}}]$ and $F\in \mathcal{M}_X(-\chi,m\hat{H},-r)$ for $\hat{H}$ the ample generator of $\hat{X}$. For $F$, consider the sequence 
\[
0\to F\to F^{**}\to T\to 0.
\]
Applying $\Hom(k_0,-)$ and noting that $F^{**}$ is locally free, we see that 
\[
\Ext^1(k_{0},F)\neq 0 \iff T \text{ is supported at $[\Oc_{\hat{X}}]$}.
\]
 Thus, under $\Phi[1]$, the Brill-Noether locus of $\mathcal{M}_{\sigma_{t}}(\mathbf{v})$ is identified with the locus of sheaves in $\mathcal{M}_X(-\chi,m\hat{H},-r)$ singular at $[\Oc_{\hat{X}}]$. To complete the proof, we will show directly in Theorem \ref{singular} that this locus has codimension $1-\chi$.
 \end{proof}

\begin{theorem}\label{singular}
Let $(X,H)$ be a polarized abelian surface of Picard rank 1 with $H$ the ample generator. Let $\mathbf{v}=(r,mH,\chi)$ be a Mukai vector with $r,m\geq 1$ and $\chi\leq -1$. Let $p\in X$ be a reduced point. Then 
\[
\operatorname{codim} \{E\in \mathcal{M}_X(\mathbf{v}) \;|\; E \text{ is singular at $p$}\} = r+1
\] 
\end{theorem}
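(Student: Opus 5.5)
The plan is to stratify the locus of sheaves singular at $p$ by the numerical type of the double dual, and to bound each stratum by a parameter count. For $E\in\mathcal{M}_X(\mathbf{v})$, consider the canonical sequence
\[
0\to E\to E^{**}\to T\to 0.
\]
Here $E^{**}$ is reflexive, hence locally free on the surface $X$, and it is $\mu$-semistable because $E$ is. The quotient $T$ is zero-dimensional, of some total length $\ell'\geq 0$, and $E$ is singular at $p$ exactly when the length $\ell$ of $T_p$ is at least $1$. On the stratum $\mathcal{Z}_{\ell,\ell'}$ with fixed $(\ell,\ell')$, the sheaf $F=E^{**}$ has $\mathbf{v}(F)=(r,mH,\chi+\ell')$. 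Since $\langle\mathbf{v},\mathbf{v}\rangle=2dm^2-2r\chi$, this gives
\[
\mathbf{v}(F)^2=\mathbf{v}^2-2r\ell'.
\]
Conversely, $E$ is recovered from $F$ together with a point of $\mathrm{Quot}(F,\ell')$ whose quotient has length $\ell$ at $p$. Hence
\[
\dim\mathcal{Z}_{\ell,\ell'}\leq \dim\{F\ \mu\text{-semistable locally free},\ \mathbf{v}(F)\}+\dim \mathrm{Quot}_{\ell,\ell'}(F).
\]

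For the Quot term I would use the Ellingsrud--Lehn result that the punctual Quot scheme of length-$n$ quotients of a rank $r$ bundle supported at a fixed point is irreducible of dimension $rn-1$. Length $\ell'-\ell$ supported away from $p$ contributes at most $(r+1)(\ell'-\ell)$, by the same theorem with the support point allowed to vary. This gives
\[
\dim \mathrm{Quot}_{\ell,\ell'}(F)\leq (r+1)(\ell'-\ell)+r\ell-1.
\]
Granting the bound $\dim\{F\}\leq \mathbf{v}(F)^2+1=\mathbf{v}^2-2r\ell'+1$, and using $\dim\mathcal{M}_X(\mathbf{v})=\mathbf{v}^2+1$, we get
\[
\operatorname{codim}\mathcal{Z}_{\ell,\ell'}\geq 2r\ell'-(r+1)(\ell'-\ell)-r\ell+1=(r-1)\ell'+\ell+1\geq r+1,
\]
because $\ell'\geq\ell\geq 1$. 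Equality holds only for $\ell=\ell'=1$. The strata are finite in number (since $\mathbf{v}(F)^2\geq 0$ forces $\ell'$ to be bounded), so the locus singular at $p$ has codimension at least $r+1$.

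For the reverse inequality, take $\mathbf{v}'=(r,mH,\chi+1)$. If $\mathbf{v}'^2>0$ and $r\geq 2$, Theorem \ref{sheavesexistence}(2) together with Remark \ref{stablesheavesexist} gives a $\mu$-stable vector bundle $F$ with $\mathbf{v}(F)=\mathbf{v}'$; the exclusion $\mathbf{v}'\neq(r,0,-1)e^\eta$ needs a quick check, since now $\chi+1\leq 0$. The elementary modifications $\ker(F\to\mathcal{O}_p)$ are $\mu$-stable (by the lemma in Section \ref{elemmodprelim}), have Mukai vector $\mathbf{v}$, and are singular at $p$. They form a family of dimension $(\mathbf{v}'^2+1)+(r-1)=\mathbf{v}^2-r$, which has codimension exactly $r+1$. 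The edge cases $r=1$ (ideal sheaves twisted by line bundles, where the count is direct on the Hilbert scheme) and $\mathbf{v}'^2=0$ would be checked separately with the same count, using the isotropic dimension formula of Theorem \ref{sheavesexistence}(1).

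The main obstacle is the bound $\dim\{F\}\leq\mathbf{v}(F)^2+1$ for the stack of $\mu$-semistable locally free sheaves, since these need not be Gieseker semistable. I would handle it as in Proposition \ref{wallcrossing}: stratify by Gieseker HN type. All factors have the same slope, so Theorem \ref{nueryoshioka} (or its sheaf version due to Bridgeland) bounds each stratum by $\sum(\mathbf{v}_i^2+\delta_i)+\sum_{i<j}\langle\mathbf{v}_i,\mathbf{v}_j\rangle$. Equal slopes give $\langle\mathbf{v}_i,\mathbf{v}_j\rangle\geq 0$ in general, and $\langle\mathbf{v}_i,\mathbf{v}_j\rangle\geq\delta_i$ along the lines of Lemma \ref{myypositive}. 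Together these should give $\dim\leq\mathbf{v}(F)^2+1$ up to a constant. If the resulting bound turns out to be weaker by a bounded amount, I would note that the surplus $(r-1)\ell'+\ell+1-(r+1)$ absorbs the loss for every stratum except $\ell=\ell'=1$. In that remaining stratum I would argue directly that the generic $F$ is Gieseker stable.
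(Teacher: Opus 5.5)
Your proof of $\operatorname{codim}\geq r+1$ follows the paper's argument: pass to $E^{**}$, bound the fibres by Ellingsrud--Lehn, and compare with $\dim\mathcal{M}_X(\mathbf{v})=\mathbf{v}^2+1$. Stratifying by $(\ell,\ell')$ and using the punctual dimension $r\ell-1$ only sharpens the paper's fibre bound $(r+1)\ell'-2$ to $(r+1)\ell'-\ell-1$. The paper cites Kurihara--Yoshioka (Lemma 3.8) for $\dim\{F\}\leq\mathbf{v}(F)^2+1$ when $\mathbf{v}(F)^2>0$. Your Gieseker HN stratification does prove this bound, with no constant lost, once you use the identity
\[
\langle\mathbf{v}_i,\mathbf{v}_j\rangle=\frac{r_ir_j}{2}\Bigl(\frac{\mathbf{v}_i^2}{r_i^2}+\frac{\mathbf{v}_j^2}{r_j^2}\Bigr)
\]
for vectors of equal slope. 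From it:
\begin{enumerate}
\item Two isotropic factors would have pairing $0$, so Lemma \ref{myypositive} does not transfer literally. They would also have equal $\chi_i/r_i$, hence equal reduced Hilbert polynomials, which is impossible for distinct HN factors.
\item An isotropic factor $\delta_i\mathbf{w}_i$ pairs with every other factor to a positive multiple of $\delta_i$.
\end{enumerate}
Hence $\sum_{i<j}\langle\mathbf{v}_i,\mathbf{v}_j\rangle\geq\sum_i\delta_i-1$.

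The step that is not yet a proof is the isotropic double dual. If $\mathbf{v}(F)^2=0$, the bound you grant is false. Write $\mathbf{v}(F)=k\mathbf{w}$ with $\mathbf{w}$ primitive; the stack then has dimension $k$, not $1$. This really occurs, e.g.\ $d=1$, $\mathbf{v}=(2,2H,-1)$, $\ell'=3$, $\mathbf{v}(F)=2(1,H,1)$. Your remedy is that the surplus $(r-1)(\ell'-1)+(\ell-1)$ absorbs ``a bounded loss'' away from $\ell=\ell'=1$. That is not justified as stated, since at $(\ell,\ell')=(1,2)$ the surplus is only $r-1$.

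What is needed is the paper's Case 2:
\begin{enumerate}
\item The identity above forces all factors of a $\mu$-semistable $F$ with $\mathbf{v}(F)^2=0$ to be isotropic with equal $\chi_i/r_i$. So $F$ is Gieseker semistable and the loss is exactly $k-1$.
\item Isotropy forces $\chi+\ell'=dm^2/r>0$, so $\ell'\geq 1-\chi\geq 2$.
\item Since $k\mid r$ (or $k\mid\chi+\ell'$, as in the paper), $k-1\leq r-1\leq (r-1)(\ell'-1)+(\ell-1)$.
\end{enumerate}
In particular the stratum $\ell=\ell'=1$ never has isotropic double dual, because there $\mathbf{v}(F)^2=2dm^2-2r(\chi+1)\geq 2dm^2>0$. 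Your plan to show that the generic $F$ in that stratum is Gieseker stable is therefore unnecessary. It would not bound the dimension of the whole stratum anyway.

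For the reverse inequality the routes differ. The paper simply invokes the determinantal structure (Huybrechts--Lehn, Lemma 9.2.1), which presupposes that the locus is nonempty. Your family of elementary modifications $\ker(F\to\mathcal{O}_p)$ of $\mu$-stable bundles exhibits a locus of codimension exactly $r+1$, so it also proves nonemptiness. The checks you deferred are automatic. From $\chi+1\leq 0$ you get $\mathbf{v}'^2\geq 2dm^2>0$, so the case $\mathbf{v}'^2=0$ never arises. Moreover, $\mathbf{v}'=(r,0,-1)e^{tH}$ with $r\geq 2$ would force $\chi+1=drt^2-1\geq 1$, which is excluded.
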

\begin{remark}
In remark 3.2 of \cite{yoshioka}, Yoshioka proves this when $r$ and $m$ are coprime. Our proof extends his argument to the arbitrary $m\geq 1$ case. 
\end{remark}
\begin{proof}
As in the case of the Brill-Noether locus, the locus in question also has a determinental structure, so we have the lower bound $r+1$ on the codimension (see Lemma 9.2.1 in \cite{huybrechtslehn}). Thus, it again suffices to prove that the codimension is $\geq r+1$.
Consider the sequence 
\[
0\to E\to E^{**}\to T_E\to 0,
\]
where $T_E$ is a sheaf supported at the singularities of $E$. We record that
\[
\mathbf{v}(E^{**})=(r,mH,\chi+\ell(T_E)).
\]
Note that there are only finitely many possible length of $T_E$. Indeed, if $\Delta$ denotes the discriminant, then
\[
\Delta(E)=\Delta(E^{**})+2r\ell(T_E).
\]
$\Delta(E)$ is a fixed value, and since $E^{**}$ remains $\mu$-semistable, both $\Delta(E),\Delta(E^{**})\geq 0$ by the Bogomolov inequality, thus bounding $\ell(T)$ from above. Thus, it suffices to prove the codimension bound for $E$ with a fixed $\ell(T_E)$. From now on, denote this length by $\ell$. \par
We first observe that, if $F$ is locally free on $X$, then the subscheme of the quot scheme $\operatorname{Quot}(F,\ell)$ consisting of quotients \emph{whose support contains a fixed point $p\in X$} has dimension 
\begin{equation}\label{quotdim}
\ell(r+1)-2.
\end{equation} (note that, since quotients have no automorphisms, the dimension of the stack remains the same). This follows from a quick incidence correspondence argument using Theorem 1 of \cite{ellingsrudlehn}, which computes the dimensions of the quot scheme and punctual quot schemes.

 We consider a few different cases. 
\begin{enumerate}
\item First suppose $\mathbf{v}(E^{**})^2>0$. In this case, the stack of $\mu$-semistable sheaves of Mukai vector $\mathbf{v}(E^{**})^2$ has dimension $\mathbf{v}(E^{**})^2+1$ (by Lemma 3.8 of \cite{kuriharayoshioka}). Thus, applying equation \ref{quotdim} above, the stack of $E$ singular at $p$ with length $\ell$ singular locus has dimension at most
\[
\mathbf{v}(E^{**})^2+1 + \ell(r+1)-2.
\]
Using $\mathbf{v}(E^{**})^2=\mathbf{v}^2-2r\ell$, this simplifies to
\[
\mathbf{v}^2-(r-1)\ell-1.
\]
Thus, the codimension in this stack in $\mathcal{M}_{X}(\mathbf{v})$, which has dimension $\mathbf{v}^2+1$, is at least
\[
(r-1)\ell+2\geq r+1.
\]

\item Now suppose $\mathbf{v}(E^{**})=0$ and $r\geq 2$. Let $\mathbf{v}_i=(r_i,m_iH,\chi_i)$ be the $\mu$-Jordan-H\"older factors of $E^{**}$. Note that 
\[
\frac{m_i}{r_i}=\frac{m}{r}.
\]
Thus, if $H^2=2d$,
\begin{align*}
\sum \frac{\mathbf{v}_i^2}{r_i}&=\sum (\frac{2dm_i^2}{r_i}-2\chi_i)\\
&=\frac{2dm}{r}\sum m_i-\sum 2\chi_i\\
&=\frac{2dm^2}{r}-2\chi(E^{**})\\
&=\frac{\mathbf{v}(E^{**})^2}{r}\\
&=0.
\end{align*}
Since $\mathbf{v}_i^2\geq 0$, we conclude that $\mathbf{v}_i^2=0$ for all $i$. Thus, 
\[
\frac{\chi_i}{r_i}=\frac{\chi(E^{**})}{r}
\]
for all $i$. In particular this shows that all Jordan-H\"older factors of $E^{**}$ have the same reduced Hilbert polynomial as $E^{**}$, so we conclude that $E^{**}$ is Gieseker semistable. Write $\mathbf{v}(E^{**})=k \mathbf{w}$ with $k\geq 1$ for a primitive Mukai vector $\mathbf{w}$. We then have $\dim \mathcal{M}_X(\mathbf{v}(E^{**}))=k$. Thus, as in the previous case, the codimension of the locus in the theorem is at least
\begin{align*}
\mathbf{v}^2+1-(k+(r+1)\ell-2)&=2r\ell-k-(r+1)\ell+3\\
&=(r-1)\ell-k+3,
\end{align*}
where we used that $\mathbf{v}^2=2r\ell$ since $\mathbf{v}(E^{**})^2=0$. Further, $\mathbf{v}(E^{**})^2=0$ also gives us 
\[
\chi(E^{**})=\frac{dm^2}{r}>0.
\]
Since $k$ divides $\chi(E^{**})=\chi+\ell$, we get
\[
k\leq \chi+\ell.
\]
Thus, the codimension is at least
\begin{align*}
(r-1)\ell-k+3&\geq (r-2)\ell-\chi+3\\
&\geq (r-2)-\chi +3\\
&= r+1,
\end{align*}
where we used that $r\geq 2$ implies $(r-2)\ell\geq r-2$.

\item Finally, we deal with the $r=1$ case. But this case is easy since every sheaf is isomorphic to $I_Z\otimes L$ for some $L\in \operatorname{Pic}^m(X)$ and $\ell(Z)=dm^2-\chi$. So being singular at $p$ just means $p\in Z$, which has codimension $2=r+1$ in the relevant Hilbert scheme.\end{enumerate} 
\end{proof}

\section{The Segre Invariant}\label{segresection}

In this section, we will use the results developed in the previous sections to study the rank 1 Segre invariant on Picard rank 1 abelian surfaces. We first recall the definition.

\begin{defn}
Let $(X,H)$ be a polarized smooth projective surface. Let $E$ be a coherent sheaf of rank $r$, and let $1\leq k\leq r-1$. The \emph{$k$th Segre invariant} of $E$ is defined to be
\[
s_k(E)=\min_{F\subset E}\big\{ (kc_1(E)-rc(F)).H\},
\]
where the minimum is taken over all subsheaves $F\subset E$ of rank $k$. \par
Equivalently, the $k$th Segre invariant measures the maximal degree of a rank $k$ subsheaf of $E$.
\end{defn}

It will be helpful to assume that the general sheaf in $M_X(\mathbf{v})$ is locally free and $\mu$-stable. We make a short remark on this condition.

\begin{remark}\label{assumelocallyfree}
Recall from Theorem \ref{sheavesexistence} that, if $r\geq 2$, $\mathbf{v}^2>0$, and $\mathbf{v}\neq (r,0,-1)e^{tH}$ for some $t$, then $M_X(\mathbf{v})$ will contain $\mu$-stable locally free sheaves. \par
To understand this condition, recall that $\mathbf{v}^2>0$ is essentially the condition for the existence of stable sheaves (we need $\mathbf{v}^2\geq 0$ for the existence of semistable sheaves, and if $\mathbf{v}^2=0$, then $M_X(\mathbf{v})$ contains stable sheaves if and only if $\mathbf{v}$ is primitive). Now, for some Mukai vector $\mathbf{v}=(r,mH,\chi)$, if $H^2=2d$, we have
\[
\mathbf{v}^2=2dm^2-2r\chi.
\]
Thus, for a given $\mathrm{rank}=r$ and $\mathrm{ch}_1=mH$, there is a $\chi$ for which $\mathbf{v}=(r,mH,\chi)$ is isotropic if and only if $\frac{dm^2}{r}\in \ZZ$, in which case $\chi=\frac{dm^2}{r}$.\par
Now, observe that $\mathbf{v}=(r,0,-1)e^{tH}$ means
\[
\mathbf{v}=(r,rtH,\frac{d(rt)^2}{r}-1)
\]
and
\[
\mathbf{v}^2=2r.
\]
Thus, to guarantee $\mu$-stable vector bundles in $M_X(\mathbf{v})$, we only need to exclude the following two classes of Mukai vectors:
\begin{enumerate}
\item Isotropic Mukai vectors, and
\item Mukai vectors $\mathbf{v}=(r,mH,\chi)$ where $r\;|\;m$ and $\chi$ is the largest two integers for which $\mathbf{v}^2\geq 0$.
\end{enumerate}
\end{remark}

The following lemma explains why it helps to assume locally free sheaves exist.
\begin{lemma}\label{locallyfree}
Suppose $E$ is a locally free sheaf with $s_1(E)=k$. Then $s_1(E)$ is realized by a subsheaf $L$, where $L$ is a degree $k$ line bundle.
\end{lemma}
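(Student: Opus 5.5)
Take a rank $1$ subsheaf $F\subset E$ realizing $s_1(E)$. Its degree is determined by $c_1(F)$, so I will modify $F$ without changing $c_1$, aiming first for a saturated and reflexive subsheaf and then for a line bundle.

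First I replace $F$ by its saturation $F^{sat}$, the kernel of $E\to (E/F)/\mathrm{tors}$. It contains $F$ with torsion quotient $F^{sat}/F$. Since $F$ realizes the minimum of $(c_1(E)-r\,c_1(F)).H$, the degree of $F$ is maximal among rank 1 subsheaves. So $c_1(F^{sat}).H\le c_1(F).H$. Since $F\subset F^{sat}$, the difference $c_1(F^{sat})-c_1(F)$ is effective, so $c_1(F^{sat}).H\ge c_1(F).H$. Hence equality holds, $F^{sat}$ has the same degree $k$, and it also realizes $s_1(E)$. From now on I take $F$ saturated, so $E/F$ is torsion-free.

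Next I show a saturated subsheaf of a reflexive sheaf is reflexive. Here $E$ is locally free, hence reflexive, and $E/F$ is torsion-free. Form $F^{**}$, which contains $F$ with quotient supported in codimension $2$. The map $F\to E$ extends to $F^{**}\to E^{**}=E$, because a map into a reflexive sheaf extends over codimension $2$. Its image $G$ contains $F$, and $G/F\subset E/F$ is supported in codimension $2$. Since $E/F$ is torsion-free, $G/F=0$, so $F^{**}\to E$ factors through $F$. Because $F\to F^{**}$ is injective and $F^{**}$ is torsion-free, this gives $F=F^{**}$.

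Finally, on the smooth surface $X$ a reflexive rank 1 sheaf is a line bundle. Indeed, reflexive sheaves on a smooth surface are locally free, and in rank one $F\cong \mathcal{O}_X(c_1(F))$. So $L:=F$ is a line bundle of degree $k$ realizing $s_1(E)$.

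The step that needs the most care is the reflexivity argument in the third paragraph. Its key inputs are that $E$ is reflexive, which is exactly where local freeness of $E$ is used, and that $E/F$ is torsion-free. Without local freeness the saturated subsheaf could be $I_Z\otimes L$ with $Z\neq\emptyset$, and the conclusion would fail.
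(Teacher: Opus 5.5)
Your proof is correct, and it rests on the same key input as the paper's. Because $E$ is locally free (hence reflexive), the inclusion $F\subset E$ extends to a map $F^{**}\to E^{**}=E$, and a rank one reflexive sheaf on a smooth surface is a line bundle.

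The difference is that the paper skips saturation altogether. It notes that $F^{**}$ is a line bundle with the same degree as $F$, since the cokernel of $F\to F^{**}$ has finite length. The induced map $F^{**}\to E$ is injective, because a nonzero map from a rank one torsion-free sheaf to a torsion-free sheaf is injective. So $F^{**}$ itself is already the required subsheaf. Note that the paper leaves this injectivity implicit.

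Your route through $F^{sat}$ is longer but harmless, since you then prove that a saturated subsheaf of a reflexive sheaf is reflexive. It also makes explicit something the paper does not address: the line bundle $L$ realizing $s_1(E)$ can be chosen with $E/L$ torsion-free.
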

\begin{proof}
If $L\subset E$ is a coherent subsheaf of rank 1 that is not a line bundle, then $L^{**}$ is a line bundle with $\deg L^{**}=\deg L$. Since $E$ is locally free, $E$ contains $L^{**}$.
\end{proof}

In view of the above discussion, we will assume our Mukai vectors satisfy the following assumption.

\begin{assumptions}\label{segreassumption} 
For $\mathbf{v}=(r,mH,\chi)$, assume $r\geq 2$, $\mathbf{v}^2>0$, and $\mathbf{v}\neq (r,0,-1)e^{tH}$ for some $t\in \ZZ$. In particular, $M_X(\mathbf{v})$ contains $\mu$-stable locally free sheaves.
\end{assumptions}

\subsection{Generic Segre invariant}
In this subsection, we determine the Segre invariant of the general stable sheaf.

\begin{theorem}\label{genericsegre}
Let $\mathbf{v}=(r,mH,\chi)$ be a Mukai vector satisfying assumption \ref{segreassumption}. Let $k_{\mathbf{v}}$ be the largest integer $k<\mu(\mathbf{v})$ such that $\chi(L,E)\geq -1$, where $\mathbf{v}(E)=\mathbf{v}$ and $L$ is a degree $k$ line bundle. Then for the general sheaf $E$ in $M_X^s(\mathbf{v})$, 
\[
s_1(E)=(m-rk_{\mathbf{v}})H^2.
\] 
\end{theorem}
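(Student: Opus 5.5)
The plan is to translate maps from line bundles into sections of twists and then apply the Brill--Noether results of Sections \ref{nonempty} and \ref{codim}. Fix $k<\mu(\mathbf{v})$ and a line bundle $L$ with $c_1(L)=kH$. Then
\[
\Hom(L,E)=H^0(E\otimes L^{-1}), \qquad \mathbf{v}(E\otimes L^{-1})=(r,(m-rk)H,\chi_k),
\]
where
\[
\chi_k=\chi(L,E)=\chi-2dmk+drk^2 .
\]
Here $m-rk\geq 1$ because $k<\mu$. Tensoring by $L^{-1}$ identifies $M_X(\mathbf{v})$ with $M_X(\mathbf{v}\cdot e^{-kH})$ and carries $\mu$-stable sheaves to $\mu$-stable sheaves. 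The function $k\mapsto\chi_k$ is a convex quadratic with minimum at $k=\mu$, so it is strictly decreasing on $k<\mu$. Hence $k_{\mathbf{v}}$ exists, and $\chi_k\leq -2$ for every integer $k$ with $k_{\mathbf{v}}<k<\mu$. By Lemma \ref{locallyfree}, applied to a general $E$ (which is locally free and $\mu$-stable by Assumption \ref{segreassumption}), it suffices to prove two things: (a) a general $E$ admits no nonzero map from any line bundle of degree $k$ with $k_{\mathbf{v}}<k<\mu$; and (b) a general $E$ admits a nonzero map from some line bundle of degree $k_{\mathbf{v}}$. Together these give $s_1(E)=(m-rk_{\mathbf{v}})H^2$.

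For (a), consider for each of the finitely many such $k$ the incidence
\[
I_k=\{(E,L)\;|\;H^0(E\otimes L^{-1})\neq 0\}\subset M_X(\mathbf{v})\times \operatorname{Pic}^k(X).
\]
For fixed $L$ the fiber is identified with the Brill--Noether locus of $M_X(\mathbf{v}e^{-kH})$. When $\chi_k\leq 0$, Theorem \ref{BNmain} gives that this locus has codimension $1-\chi_k\geq 3$. Hence $\operatorname{codim} I_k\geq 3>2=\dim \operatorname{Pic}^k(X)$, so the projection of $I_k$ to $M_X(\mathbf{v})$ is a proper closed subset. Removing these finitely many closed subsets proves (a).

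For (b), I will argue in cases on $\chi_{k_{\mathbf{v}}}\geq -1$.
\begin{enumerate}
\item If $\chi_{k_{\mathbf{v}}}\geq 1$, then every $E$ has a section of $E\otimes L^{-1}$. Indeed $H^2(E\otimes L^{-1})=\Hom(E,L)^*=0$ by $\mu$-stability, since $\mu(L)<\mu(E)$, so $h^0\geq\chi_{k_{\mathbf{v}}}>0$.
\item If $\chi_{k_{\mathbf{v}}}=0$, the argument in the proof of Theorem \ref{nonemptiness} applies verbatim to every $\mu$-stable $E\otimes L^{-1}$: since $\Phi(E\otimes L^{-1})\neq 0$, some degree-zero twist $E\otimes L^{-1}\otimes P$ has $H^0\neq 0$.
\item If $\chi_{k_{\mathbf{v}}}=-1$, use the incidence $I_{k_{\mathbf{v}}}$. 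By Theorem \ref{BNmain} (and Theorem \ref{nonemptiness} for nonemptiness), it has codimension exactly $2$, so it has dimension $\dim M_X(\mathbf{v})$. The projection to $M_X(\mathbf{v})$ is then dominant as soon as its general fiber is finite, and that is what remains to be shown.
\end{enumerate}

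The main obstacle is exactly this finiteness of the fibers $\{L\in\operatorname{Pic}^{k_{\mathbf{v}}}\;|\;H^0(E\otimes L^{-1})\neq0\}$ for general $E$ in the image. To attack it, I would follow the strategy of the proof of Theorem \ref{BNmain}. By Proposition \ref{wallcrossingfinal}, off a locus of codimension at least $2$ in the Brill--Noether locus, objects survive to $\sigma_t$ with $t\ll 1$. There $\Phi[1]$ turns $E\otimes L_0^{-1}$ into a Gieseker sheaf $F$ on $\hat X$. By Theorem \ref{fouriermukai}(2), the twists $L_0\otimes P_x^{-1}$ with a section correspond to points where $\Ext^1(\Oc_{-x},F)\neq0$, that is, to the singular points of $F$. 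The singular locus of a torsion-free sheaf is finite, so the fibers over this open part of the image are finite. The discarded locus has codimension at least $2$ in a $\dim M_X(\mathbf{v})$-dimensional incidence, so it cannot affect dominance. Consequently a general $E$ has a degree-$k_{\mathbf{v}}$ line subsheaf, which saturates to a line subbundle by Lemma \ref{locallyfree}, completing (b).
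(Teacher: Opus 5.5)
Your steps (a) and (b)(1) match the paper. Your (b)(2) is a simpler route than the paper's case (2). The paper uses Proposition \ref{codim1}, transforms to pure one-dimensional sheaves on $\hat X$, and counts dimensions. You instead observe that the argument of Theorem \ref{nonemptiness} already shows that every $\mu$-stable $E$ receives a nonzero map from some degree-$k_{\mathbf{v}}$ line bundle, which is correct.

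The gap is in (b)(3). Proposition \ref{wallcrossingfinal} bounds the lost Brill--Noether objects by codimension $1-\chi=2$ in $\mathcal{M}_{\sigma_+}(\mathbf{w})$, not in the Brill--Noether locus. Since the Brill--Noether locus itself has codimension exactly $2$, this gives only codimension $\geq 0$ inside it. Concretely, for fixed $L$ the discarded set has dimension at most $\dim M_X(\mathbf{v})-2$. Sweeping over the two-dimensional $\operatorname{Pic}^{k_{\mathbf{v}}}(X)$, it can have dimension $\dim M_X(\mathbf{v})=\dim I_{k_{\mathbf{v}}}$. So the discarded part may contain a whole top-dimensional component of $I_{k_{\mathbf{v}}}$, or all of it. You have not shown that any Brill--Noether object survives to $t\ll 1$. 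Without that, finiteness of the fibers over the surviving part says nothing about dominance of $\pi_1$.

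The fix is to stop restricting to the Brill--Noether locus; the paper's case (3) instead works with the general sheaf of the whole moduli space.
\begin{itemize}
\item For $\chi(\mathbf{w})=-1$, a nontrivial HN type at a wall has $s\geq 2$ factors with $\sum\chi_i=-1$. So some factor is isotropic, or has $r_i<0$, or has $\chi_i\geq 0$.
\item Propositions \ref{isotropicmain} and \ref{positive1} then show that \emph{all} lost objects have codimension $\geq 2$. Hence the general $E'=E\otimes L^{-1}$ survives to $t\ll 1$.
\item There $\Phi[1](E')\cong I_Z\otimes\mathcal{L}$ with $\ell(Z)=d(m-rk_{\mathbf{v}})^2+r>0$.
\item Any point of $Z$ gives a degree-zero $P$ with $\Hom(P,E')\neq 0$. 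So the general $E$ lies in the image of $\pi_1$ outright, and no dimension count is needed.
\end{itemize}

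An even more direct fix, in the spirit of your (b)(2): suppose $H^0(E'\otimes P)=0$ for all $P\in\operatorname{Pic}^0(X)$. Then $h^1(E'\otimes P)=1$ for all $P$, since $\chi=-1$ and $h^2=0$. So $\Phi[1](E')$ is a line bundle with Mukai vector $(1,(m-rk_{\mathbf{v}})\hat H,-r)$. This is impossible, because a line bundle with that $c_1$ has Euler characteristic $d(m-rk_{\mathbf{v}})^2>0>-r$.
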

\begin{proof}
Since the general sheaf is $\mu$-stable under assumption \ref{segreassumption}, the maximal degree $k$ of a rank 1 subsheaf of a general $E\in M_X(\mathbf{v})$ must satisfy $k<\mu(\mathbf{v})$. Now, for an integer $k<\mu(\mathbf{v})$, consider the incidence correspondence
\[\begin{tikzcd}[column sep=tiny,row sep=large]
	& {\mathcal{I}_k=\{(E,L)\;|\;L\in \operatorname{Pic}^k(X), \Hom(L,E)\neq 0\}} & \\
	{M_X^s(\mathbf{v})} && {M_X^s(\mathbf{v}e^{-kH})}
	\arrow["{\pi_1}"', from=1-2, to=2-1]
	\arrow["{\pi_2=E\otimes L^{\vee}}", from=1-2, to=2-3]
\end{tikzcd}\]
Since the general sheaf in $M_X(\mathbf{v})$ is locally free, the generic Segre invariant is then the largest integer $k$ for which $\pi_1$ is surjective.\par
Let's first study $\pi_2$. Clearly, the image of $\pi_2$ is contained in the Brill-Noether locus of $M_X^s(\mathbf{v}e^{-kH})$. We claim that, in fact, $\pi_2$ maps surjectively onto the Brill-Noether locus. Indeed, if $F\in M_X^s(\mathbf{v}e^{-kH})$ has a section, then tensoring the section
\[
\Oc_X\to F
\]
by any degree $k$ line bundle $L$ shows that $(F\otimes L,L)\in \mathcal{I}$. In fact, if a pair $(E,L)$ has $E\otimes L^{\vee}=F$, then of course 
\[
(E,L)=(F\otimes L,L).
\] Thus, we've shown that $\pi_2$ surjects onto the Brill-Noether locus of $M_X^s(\mathbf{v}e^{-kH})$, and all the fibers are isomorphic to $\operatorname{Pic}^k(X)$. Note that $c_1(\mathbf{v}e^{-kH})$ is a positive multiple of $H$ since $k<\mu(\mathbf{v})$. Thus, if $\chi(\mathbf{v}e^{-kH})\leq 0$, then by Theorem \ref{BNmain}, the Brill-Noether locus of $M_X^s(\mathbf{v}e^{-kH})$ has codimension $1-\chi(\mathbf{v}e^{-kH})$. \par
Now observe that, if $\mathbf{v}=(r,mH,\chi)$, then
\[
\chi(\mathbf{v}e^{-kH})=rdk^2-2dmk+\chi.
\]
In particular, the minimum of this quadratic occurs exactly at
\[
k=\frac{m}{r}=\mu(\mathbf{v}).
\]
Thus, we see that, for $k<\mu(\mathbf{v})$, as $k$ increases, $\chi(\mathbf{v}e^{-kH})$ decreases. So suppose $k$ is large enough that $\chi(\mathbf{v}e^{-kH})\leq -2$, in which case the Brill-Noether locus has codimension $\geq 3$. In particular, since $\dim M_X^s(\mathbf{v}e^{-kH})=\dim M_X^s(\mathbf{v})$, we learn that
\[
\dim \mathcal{I}_k \leq \dim M_X(\mathbf{v})-1,
\] 
so $\pi_2$ can't surject.\par
Thus, it now remains to prove that, for $k$ such that $\chi(\mathbf{v}e^{-kH})\geq -1$, $\pi_1$ surjects. We consider three cases.
\begin{enumerate}
\item Suppose $\chi(\mathbf{v}e^{-kH})\geq 1$. In this case, every sheaf in $M_X(\mathbf{v}e^{-kH})$ has a section. So if we start with an $E\in M_X(\mathbf{v})$, pick any $L\in \operatorname{Pic}^k(S)$. Then there exists a map
\[
\Oc_X\to E\otimes L^\vee.
\]
Twisting down by $L$ shows that $(E,L)\in \mathcal{I}_k$. 

\item Now suppose $\chi(\mathbf{v}e^{-kH})= 0$. In this case, the Brill-Noether locus has codimension 1, which means 
\[
\dim \mathcal{I}_k = \dim M_X(\mathbf{v})+1.
\]
To show that $\pi_1$ surjects, it suffices to show that there exists a dimension 1 fiber. Twisting everything by $\Oc(-kH)$, this is equivalent to showing that, for a Mukai vector $\mathbf{w}=(r,mH,0)$ with $r\geq 2,m\geq 1$, there exists a sheaf $E\in M_X^s(\mathbf{w})$ for which 
\[
\{L\in \operatorname{Pic}^0(X) \;|\; \Hom(L,E)\neq 0\}
\]
is 1-dimensional. To this end, we once again consider the moduli stack $\mathcal{M}_X(\mathbf{w})$, identifying it with the Bridgeland moduli space $\mathcal{M}_{\sigma_{t_0}}(\mathbf{w})$ for $t_0\gg0$. We now proceed as in the proof of Theorem \ref{BNmain}.\par
By Proposition \ref{codim1}, there exists a stable sheaf $E\in \mathcal{M}_X(\mathbf{w})$ that remains in $\mathcal{M}_{\sigma_{t_1}}(\mathbf{w})$ for $t_1\ll1$. Apply the Fourier-Mukai transform $\Phi[1]$ to $\mathcal{M}_{\sigma_{t_1}}(\mathbf{w})$. Then, for $L\in \operatorname{Pic}^0(X)$, we have
\[
\Hom(L,E) \cong \Ext^1(\Oc_{-L},\Phi[1](E)),
\]
where $\Phi[1](E)\in \mathcal{M}_{\hat{X}}(0,m\hat{H},-r)$. So now it will be enough to show that, for the general sheaf $F\in \mathcal{M}_{\hat{X}}(0,m\hat{H},-r)$, there is only a curve's worth of $L\in\hat{X}$ for which 
\[
\Ext^1(\Oc_{L},F)\neq 0.
\]
Recall that a sheaf of Mukai vector $(0,m\hat{H},-r)$ is a pure dimension 1 sheaf supported along some curve $C\subset \hat{X}$. 
\begin{claim*}
$\Ext^1(\Oc_{L},F)\neq 0$ if and only if $L\in C$.
\end{claim*}
\begin{proof}
Since $\mathscr{E}xt(\Oc_L,F)$ is only supported at $L$, it has no higher cohomology. Thus, the local-to-global Ext spectral sequence is trivial, so we can identify
\[
\Ext^1(\Oc_{L},F)=H^0(\mathscr{E}xt^1(\Oc_L,F)),
\]
 But then
\[
H^0(\mathscr{E}xt(\Oc_L,F))=\Ext^1(k_L,F_L),
\]
where, $k_L$ and $F_L$ denote the fibers of $\Oc_L$ and $F$ at the point $L$. If $L\notin C$, then $F_L=0$, so $\Ext^1(\Oc_L,F_L)=0$. If $L\in C$, then, because $F$ was pure dimension 1, $\Hom(\Oc_L,F_L)=0$. Using the fact that $\operatorname{depth}_{\Oc_L}(F_L)\leq 1$, we conclude that $\Ext^1(k_L,F_L)=\Ext^1(\Oc_{L},F)\neq 0$.
\end{proof}
In particular, this shows that every sheaf $F$ in $\mathcal{M}_{\hat{X}}(0,m\hat{H},-r)$ satisfies $\Ext^1(\Oc_L,F)\neq 0$ for exactly a 1-dimensional family of $L\in \hat{X}$, completing the proof of this case.

 \item Finally, suppose $\chi(\mathbf{v}e^{-kH})= -1$. In this case, 
\[
\dim \mathcal{I}_k = \dim M_X(\mathbf{v}),
\]
so it will suffice to show that the general fiber of $\pi_1$ is zero-dimensional. Again we can twist by $\Oc(-kH)$ to reduce this to looking at degree 0 line bundles mapping to sheaves of Mukai vector $\mathbf{w}=(r,mH,-1)$. The same wall-crossing argument used in the proof of Theorem \ref{BNmain} and case (2) above reduces this to showing that, for a general sheaf $F\in \mathcal{M}_{\hat{X}}(1,m\hat{H},-r)$, there are only finitely many $L\in \hat{X}$ for which 
\[
\Ext^1(\Oc_{L},F)\neq 0.
\]
As we argued in the proof of Theorem \ref{BNmain}, this is equivalent to $F$ having finitely many singularities. But this follows directly from $F$ being torsion-free on a surface.
\end{enumerate}

\end{proof}

\subsection{Nonemptiness of Segre strata}
In this subsection, we show that, for most $\mathbf{v}=(r,mH,\chi)$, for every integer $k_{\mathbf{v}}\leq k\leq \lceil \mu(\mathbf{v})\rceil -1$, $(m-rk)H^2$ is the Segre invariant of a sheaf in $M_X(\mathbf{v})$. 

We make the following definition.

\begin{defn}
Let $\mathbf{v}$ be a $M_X^s(\mathbf{v})$ be a Mukai vector satisfying assumption \ref{segreassumption}. For an integer $k$, define the strata
\[
S_k(\mathbf{v})=\{E\in M_X^s(\mathbf{v}) \;|\; s_1(E)=k\}.
\]
\end{defn}

\begin{remark}
Following the proof of Theorem \ref{genericsegre}, the natural argument for proving nonemptiness of each $S_k$ is the following: For $k\geq k_{\mathbf{v}}$, consider the same incidence correspondence
\[\begin{tikzcd}[column sep=tiny,row sep=large]
	& {\mathcal{I}_k=\{(E,L)\;|\;L\in \operatorname{Pic}^k(X), \Hom(L,E)\neq 0\}} & \\
	{M_X^s(\mathbf{v})} && {M_X^s(\mathbf{v}e^{-kH})}
	\arrow["{\pi_1^k}"', from=1-2, to=2-1]
	\arrow["{\pi_2=E\otimes L^{\vee}}", from=1-2, to=2-3]
\end{tikzcd}\]
Then 
\[
\im \pi_1^k\supset \im \pi_1^{k+1},
\]
so we would like to say that, if $\im \pi_1^k\neq \im \pi_1^{k+1}$, then $\im \pi_1^k$ contains a point of $S_k(\mathbf{v})$, i.e. if there exists $E$ with $\Hom(L,E)\neq 0$ for a degree $k$ line bundle, but $\Hom(L',E)=0$ for all degree $k+1$ line bundles. And $\im \pi_1^k\neq \im \pi_1^{k+1}$ would follow from computing 
\[
\operatorname{codim} \im \pi_1^k=-1-\chi(\mathbf{v}e^{-kH}),
\]
and showing that 
\[
\dim \im \pi_1^k > \dim  \im \pi_1^{k+1}.
\] 
However, once $k>k_{\mathbf{v}}$, we don't know that the image of $\pi_1^k$ contains locally free sheaves anymore, which means we don't know that the Segre invariant is realized by a line bundle. In other words, even if $\Hom(L',E)=0$ for all degree $k+1$ line bundles $L'$, $E$ could still admit a singular rank 1 subsheaf of degree $k+1$. Thus, to show nonemptiness, we proceed using a different method.
\end{remark}

The key observation is the following.
\begin{lemma}\label{nonemptylemma}
Let $\mathbf{v}=(r,mH,\chi)$ and $\mathbf{v}_1=(r,mH,\chi-1)$ be adjacent Mukai vectors satisfying assumption \ref{segreassumption}. Then 
\[
k_{\mathbf{v}}-k_{\mathbf{v}_1}= 0\text{ or }1
\]
In other words, decreasing $\chi$ by 1 changes the Segre invariant of the general sheaf by at most 1 step. 
\end{lemma}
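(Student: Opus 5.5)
The plan is to work directly with the definition of $k_{\mathbf{v}}$ as the largest integer $k<\mu(\mathbf{v})$ with $f_{\mathbf{v}}(k):=\chi(\mathbf{v}e^{-kH})\geq -1$. As computed in the proof of Theorem \ref{genericsegre}, $f_{\mathbf{v}}(k)=rdk^2-2dmk+\chi$, and $\chi(L,E)=f_{\mathbf{v}}(k)$ for $L$ of degree $k$. Since $\mathbf{v}_1$ has the same $r$ and $m$, we have $f_{\mathbf{v}_1}(k)=f_{\mathbf{v}}(k)-1$ for every $k$, and $\mu(\mathbf{v}_1)=\mu(\mathbf{v})$. On the range $k<\mu$ both functions are strictly decreasing in $k$, because the vertex of the parabola sits at $\mu(\mathbf{v})$.

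\textbf{Step 1: $k_{\mathbf{v}_1}\leq k_{\mathbf{v}}$.} If $f_{\mathbf{v}_1}(k)\geq -1$, then $f_{\mathbf{v}}(k)\geq 0\geq -1$. Hence the set of admissible $k$ for $\mathbf{v}_1$ is contained in that for $\mathbf{v}$, and the inequality follows. One should also check that these sets are nonempty, so that $k_{\mathbf{v}}$ is well defined. Since $f\to+\infty$ as $k\to-\infty$, every sufficiently negative $k$ is admissible.

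\textbf{Step 2: $k_{\mathbf{v}}-1\leq k_{\mathbf{v}_1}$.} Set $k=k_{\mathbf{v}}$. It suffices to show $f_{\mathbf{v}}(k-1)\geq 0$, since then $f_{\mathbf{v}_1}(k-1)\geq -1$ and $k-1<\mu$. Because $f_{\mathbf{v}}$ is decreasing on $k<\mu$, we have $f_{\mathbf{v}}(k-1)\geq f_{\mathbf{v}}(k)\geq -1$. Only the boundary case $f_{\mathbf{v}}(k-1)=-1$ remains, and it forces $f_{\mathbf{v}}(k)=-1$ as well. Equivalently, the integer $f_{\mathbf{v}}(k-1)-f_{\mathbf{v}}(k)=2dm-rd(2k-1)=d(2m-2rk+r)$ would have to vanish. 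This is impossible: $k<m/r$ gives $2m-2rk>0$, so $d(2m-2rk+r)\geq d(1+r)>0$. Therefore $f_{\mathbf{v}}(k-1)\geq f_{\mathbf{v}}(k)+d(r+1)\geq -1+d(r+1)\geq 0$, which gives Step 2.

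\textbf{Main obstacle.} The only delicate point is the strict decrease, that is, controlling the jump $f(k-1)-f(k)$ uniformly. The explicit difference computed above handles it, with $r\geq 1$ and $d\geq 1$. Assumption \ref{segreassumption} is needed only so that $k_{\mathbf{v}}$ and $k_{\mathbf{v}_1}$ carry their meaning as generic Segre invariants via Theorem \ref{genericsegre}; the numerical statement itself does not use it. Combining Steps 1 and 2 gives $k_{\mathbf{v}}-k_{\mathbf{v}_1}\in\{0,1\}$.
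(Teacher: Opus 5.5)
Your proof is correct, and it takes a more elementary route than the paper.

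\textbf{The paper's route.} It treats $\chi(\mathbf{v}e^{-kH})+1=rdk^2-2dmk+\chi+1$ as a real quadratic and computes its discriminant $4(d^2m^2-rd(1+\chi))$. It then splits into two cases.
\begin{enumerate}
\item If $\chi$ is the largest integer with $\mathbf{v}^2>0$, the discriminant is $\le 0$ and $k_{\mathbf{v}}=\lceil\mu\rceil-1$.
\item Otherwise $k_{\mathbf{v}}=\lfloor \mu-\sqrt{d^2m^2-rd(1+\chi)}/(rd)\rfloor$, the floor of the smaller root.
\end{enumerate}
Finally it bounds the difference of the corresponding square-root terms for $\mathbf{v}$ and $\mathbf{v}_1$ by $1$.

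\textbf{Your route.} You never leave the integers. The identity $f_{\mathbf{v}_1}=f_{\mathbf{v}}-1$ gives $k_{\mathbf{v}_1}\le k_{\mathbf{v}}$ at once. The jump $f_{\mathbf{v}}(k-1)-f_{\mathbf{v}}(k)=d(2(m-rk)+r)$ is at least $d(r+2)$, since $m-rk\ge 1$ for the integer $k=k_{\mathbf{v}}<\mu$. This gives $f_{\mathbf{v}_1}(k_{\mathbf{v}}-1)\ge -1$.

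This removes both the case split and the square-root estimates. In fact, strict decrease of the integer-valued $f_{\mathbf{v}}$ to the left of its vertex already gives $f_{\mathbf{v}}(k-1)\ge f_{\mathbf{v}}(k)+1\ge 0$. So your separate boundary-case discussion is not really needed. Your explicit jump does show a little more: lowering $\chi$ by any amount up to $d(r+2)$ lowers $k_{\mathbf{v}}$ by at most one.

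\textbf{What each buys.} The paper's route yields a closed form for $k_{\mathbf{v}}$, notably $k_{\mathbf{v}}=\lceil\mu\rceil-1$ when $\chi$ is maximal, and this is reused in the proof of Theorem~\ref{segrenonemptiness}. With your approach that fact would have to be stated separately. It is immediate, though: a nonpositive discriminant makes every integer $k<\mu$ admissible.
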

\begin{proof}
By Theorem \ref{genericsegre}, $k_{\mathbf{v}}$ is the largest integer $k$ for which $\chi(\mathbf{v}e^{-kH})+1 \geq 0$. Recall that 
\begin{align*}
\chi(\mathbf{v}e^{-kH})+1=rdk^2-2dmk+\chi+1.
\end{align*}
First observe that the discriminant of this quadratic is 
\[
4(dm)^2-4(rd)(1+\chi)=4(d^2m^2-rd(1+\chi))
\]
Suppose that $d^2m^2\leq rd(1+\chi)$, so that the quadratic is nonnegative. Rearranging yields
\[
\frac{dm^2}{r} \leq \chi+1.
\]
But the condition $\mathbf{v}^2>0$ also says that
\[
\frac{dm^2}{r}>\chi,
\]
which means the discriminant is 0 if and only if $\chi$ is maximal among integers satisyfing $\mathbf{v}^2=2drm-2r\chi>0$.
Thus, the generic Segre invariant is computed in two different ways:
\begin{enumerate}
\item If $\mathbf{v}=(r,mH,\chi)$, where $\chi$ is maximal among integers satisfying $\mathbf{v}^2=2drm-2r\chi>0$, then $\chi(\mathbf{v}e^{-kH})+1$ takes nonnegative values. Recall, the minimum of this quadratic occurs at $k=\mu(\mathbf{v})$. Thus, by Theorem \ref{genericsegre},
\[
k_{\mathbf{v}}=\lceil \mu(\mathbf{v})\rceil -1.
\]
(In particular, $(m-r(\lceil \mu(\mathbf{v})\rceil -1))H^2$ is the only integer that occurs as the Segre invariant of a stable sheaf in $M_X(\mathbf{v})$.)
\item Otherwise, $\chi(\mathbf{v}e^{-kH})+1$ has two roots, which means $k_{\mathbf{v}}$ is computed as the largest integer $\leq $ the smaller root. The quadratic formula yields
\[
k_{\mathbf{v}}=\Bigg\lfloor \mu-\frac{\sqrt{d^2m^2-rd(1+\chi)}}{rd}\Bigg\rfloor.
\]
\end{enumerate}
We now consider two cases. First suppose both $\mathbf{v}$ and $\mathbf{v}_1$ do not have maximal $\chi$. In other words, $k_{\mathbf{v}}$ and $k_{\mathbf{v}_1}$ are both computed by method (2) above, so that
\begin{align*}
k_{\mathbf{v}}&=\Bigg\lfloor \mu-\frac{\sqrt{d^2m^2-rd(1+\chi)}}{rd}\Bigg\rfloor\\
k_{\mathbf{v}_1}&=\Bigg\lfloor \mu-\frac{\sqrt{d^2m^2-rd(\chi)}}{rd}\Bigg\rfloor
\end{align*}
To show that $k_{\mathbf{v}}-k_{\mathbf{v}_1}=0$ or $1$, then, it suffices to show that
\[
\frac{\sqrt{d^2m^2-rd(\chi)}}{rd}-\frac{\sqrt{d^2m^2-rd(1+\chi)}}{rd}\leq 1,
\]
which follows from an elementary argument with fractions (noting that $d^2m^2-rd(1+\chi)> 0$ by assumption).\par
The second case is when $\chi$ is maximal so that
\begin{align*}
k_{\mathbf{v}}&=\lceil \mu\rceil-1\\
k_{\mathbf{v}_1}&=\Bigg\lfloor \mu-\frac{\sqrt{d^2m^2-rd(\chi)}}{rd}\Bigg\rfloor.
\end{align*}
In this case, we have to show that
\[
\frac{\sqrt{d^2m^2-rd(\chi)}}{rd}\leq 1.
\]
But here, $\chi$ was assumed to satisfy $d^2m^2-rd(1+\chi)\leq 0$ so, in particular, 
\[
d^2m^2-rd(\chi)\leq rd.
\]
Thus, 
\[
\frac{\sqrt{d^2m^2-rd(\chi)}}{rd}\leq \frac{\sqrt{rd}}{rd}\leq 1,
\]
as desired.
\end{proof}

We now get to the main result of this section.

\begin{theorem}\label{segrenonemptiness}
Let $\mathbf{v}=(r,mH,\chi)$ satisfying assumption \ref{segreassumption}. Let $k_\mathbf{v}$ denote the maximal degree of a rank $1$ subsheaf of the general $E\in M_X(\mathbf{v})$ (i.e. $s_1(E)=(m-rk_{\mathbf{v}})H^2$ for the general $E\in M_X(\mathbf{v})$). Then, for every $k$ for which $k_{\mathbf{v}}\leq k\leq \lceil \mu(\mathbf{v})\rceil-1$, $(m-rk)H^2$ is the Segre invariant of a $\mu$-stable sheaf in $M_X(\mathbf{v})$. 
\end{theorem}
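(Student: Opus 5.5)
We will prove the statement by induction on $\chi$, moving downward from the maximal admissible value, and using elementary modifications together with Lemma \ref{nonemptylemma}. The key observation is that an elementary modification $0\to E'\to E\to \Oc_p\to 0$ can only lower the maximal degree of a rank $1$ subsheaf by at most one step; in fact it does not lower it at all. If $L\subset E$ has degree $k$, then $L\cap E'$ is a rank $1$ subsheaf of $E'$. Its cokernel in $L$ injects into $\Oc_p$, so $c_1(L\cap E')=c_1(L)$ and the degree is still $k$. Conversely, any rank $1$ subsheaf of $E'$ is a rank $1$ subsheaf of $E$. Hence $s_1(E')=s_1(E)$ exactly. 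By the Lemma in Section \ref{elemmodprelim}, $E'$ is also $\mu$-stable.

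This gives the inductive mechanism. Let $\mathbf{v}_1=(r,mH,\chi-1)$ and suppose every $k$ with $k_{\mathbf{v}}\le k\le \lceil\mu\rceil-1$ is realized by a $\mu$-stable $E\in M_X(\mathbf{v})$. Applying one elementary modification to each such $E$ realizes the same $k$ in $M_X(\mathbf{v}_1)$. Here $\mu$-stability guarantees that $E'$ is Gieseker stable of Mukai vector $\mathbf{v}_1$. Theorem \ref{genericsegre} applied to $\mathbf{v}_1$ realizes $k_{\mathbf{v}_1}$ by a general sheaf, which is $\mu$-stable under assumption \ref{segreassumption}. By Lemma \ref{nonemptylemma}, $k_{\mathbf{v}_1}\in\{k_{\mathbf{v}},k_{\mathbf{v}}-1\}$. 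So the values realized in $M_X(\mathbf{v}_1)$ fill the whole interval $[k_{\mathbf{v}_1},\lceil\mu\rceil-1]$, since $\mu$ does not change.

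The base case is the maximal $\chi$ with $\mathbf{v}^2>0$. There, as recorded in case (1) of the proof of Lemma \ref{nonemptylemma}, $k_{\mathbf{v}}=\lceil\mu\rceil-1$, so the interval is a single point and Theorem \ref{genericsegre} handles it.

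The main obstacle is that the chain of adjacent Mukai vectors from the maximal $\chi$ down to a given $\chi$ must stay within assumption \ref{segreassumption}. The excluded vectors are the isotropic ones and those of the form $(r,0,-1)e^{tH}$, which occur exactly when $r\mid m$ and $\chi$ is near the top. I would handle this by starting the induction at the largest $\chi$ for which assumption \ref{segreassumption} holds. At that vector I would compute $k_{\mathbf{v}}$ directly and check that it equals $\lceil\mu\rceil-1$, or at worst $\lceil\mu\rceil-2$ with both values realized. When $r\mid m$, one has $\mu\in\ZZ$ and $\chi(\mathbf{v}e^{-\mu H})=\chi-dm^2/r$. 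From this one checks that at the first allowed $\chi$ the quadratic $\chi(\mathbf{v}e^{-kH})+1$ at $k=\mu-1$ equals $rd+\chi-dm^2/r+1\ge 0$, which gives $k_{\mathbf{v}}=\mu-1=\lceil\mu\rceil-1$. Below this starting point every vector again satisfies the assumption, so Lemma \ref{nonemptylemma} applies at each step and the induction runs.
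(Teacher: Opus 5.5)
Your proposal is correct and follows essentially the paper's argument: elementary modifications of a general $\mu$-stable sheaf at a larger $\chi$, Lemma \ref{nonemptylemma} to guarantee there are no gaps in the range of $k$, and the special starting vector $(r,rtH,drt^2-2)$ when $r\mid m$. The differences are cosmetic. You run the argument as a one-step downward induction on $\chi$, whereas the paper jumps directly from a suitable $\chi_k$ to $\chi$. You also justify $s_1(E')=s_1(E)$ via $L\cap E'$, whereas the paper elementarily modifies the sequence $0\to L\to E\to F\to 0$.
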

\begin{proof}
For a Mukai vector $\mathbf{v}=(r,mH,\chi)$, let $\mathbf{v}_{\text{max}}=(r,mH,\chi_{\text{max}})$, where $\chi_{\text{max}}$ is the largest integer for which $\mathbf{v}_{\text{max}}=(r,mH,\chi_{\text{max}})$ satisfies $\mathbf{v}_{\text{max}}>0$.
We consider two cases separately.\par 
First suppose $r\nmid m$, so that $M_X(\mathbf{v}_{\text{max}})$ contains $\mu$-stable sheaves (see Remark \ref{locallyfree}). As observed in Lemma \ref{nonemptylemma} above, 
\[
k_{\mathbf{v}_{\text{max}}}=\lceil \mu\rceil -1.
\] 
Now suppose $k$ is between $k_{\mathbf{v}}$ and $\lceil \mu\rceil -1$. By Lemma \ref{nonemptylemma}, there exists some $\mathbf{v}_k=(r,mH,\chi_k)$ for which the Segre invariant of the general sheaf in $M_X(\mathbf{v}_k)$ is $(r-mk)H^2$. Let $E\in M_X(\mathbf{v}_k)$ and consider a sequence 
\[
0\to L \to E\to F\to 0,
\]
where $L$ is a degree $k$ line bundle. By our assumption on $\mathbf{v}_\text{max}$, we may assume $E$ is $\mu$-stable (see Remark \ref{locallyfree}). Thus, applying $\chi_k-\chi$ number of elementary modifications to the sequence above yields a sequence 
\[
0\to L\to E'\to F'\to 0,
\]
where $E'$ is now a $\mu$-stable sheaf in $M_X(\mathbf{v})$ with Segre invariant $(r-mk)H^2$ (since $E'\subset E$, any rank 1 subsheaf of $E'$ with larger degree contradicts the fact that $s_1(E)=(r-mk)H^2$).\par
Now suppose $r\;|\;m$, so that
\[
\mathbf{v}_{\text{max}}=(r,rtH,drt^2-1)
\]
for some $t$. Then $M_X(\mathbf{v}_{\text{max}})$ has no $\mu$-stable sheaves, so the argument above fails since elementary modifications of a Gieseker stable sheaf need not remain Gieseker stable (they only remain slope semistable). Thus, in this case, our starting vector needs to be $\mathbf{v}_{\text{max}'}=(r,rtH,drt^2-2)$. For this Mukai vector, we have
\begin{align*}
k_{\mathbf{v}_{\text{max}}'} &= \Bigg\lfloor \mu-\frac{\sqrt{d^2(rt)^2-rd(drt^2-1)}}{rd}\Bigg\rfloor=\Bigg\lfloor \mu-\frac{\sqrt{rd}}{rd}\Bigg\rfloor=\lceil \mu \rceil-1,
\end{align*}
since $\mu$ is an integer in this case. We then conclude using the same elementary modification argument as above. 
\end{proof}

\bibliographystyle{plain}

\end{document}